\newcommand\A{\mathbb{A}}
\newcommand\C{\mathbb{C}}
\newcommand\R{\mathbb{R}}
\newcommand\norm[2][]{{\left\lVert#2\right\rVert_{#1}}}
\newtheorem{theo}{Theorem}
\newtheorem{lemma}[theo]{Lemma}
\newtheorem{pr}[theo]{Proposition}
\newtheorem{df}{Definition}
\newtheorem{remark}{Remark}
\newcommand{\inner}[2]{\langle #1,#2\rangle}
\def\ve{{\varepsilon}}
\def\eps{{\varepsilon}}
\def\Ga{{\cal G}_\alpha}
\def\Ger{{\cal G}_{\eps,\alpha}}
\def\.{{\;}}
\def\bv{{{\tt{BV}}(\Omega)}}
\def\phia{{\phi^a}}  
\def\phic{{\phi^c}}
\newcommand{\phiak}[1]{\phi^{a}_{{#1}}} 
\newcommand{\phick}[1]{\phi^{c}_{{#1}}} 
\begin{document}
\setcounter{footnote}{1}

\title{On the identification of piecewise constant coefficients in optical diffusion tomography by level set}

\author{J. P. Agnelli
\thanks{ FaMAF-CIEM, Universidad Nacional de C\'ordoba, Medina Allende s/n 5000, C\'ordoba, Argentina.
(\href{mailto:agnelli@famaf.unc.edu.ar}{\tt agnelli@famaf.unc.edu.ar}).}
\and
A.~De Cezaro
\thanks{Institute of Mathematics Statistics and Physics,
Federal University of Rio Grande, Av. Italia km 8, 96201-900 Rio
Grande, Brazil (\href{mailto:decezaromtm@gmail.com}{\tt
decezaromtm@gmail.com}).}
\and
A.~Leit\~ao
\thanks{Department of Mathematics, Federal University of St. Catarina,
P.O. Box 476, 88040-900 Florian\'opolis, Brazil
(\href{mailto:aleitao@mtm.ufsc.br}{\tt aleitao@mtm.ufsc.br}).}
\and
M. Marques Alves
\thanks{Department of Mathematics, Federal University of St.
Catarina, 88040-900 Florian\'opolis, Brazil
(\href{mailto:maicon.alves@ufsc.br}{\tt maicon.alves@ufsc.br}).
}
}
\date{\small\today}

\maketitle

\begin{abstract}
In this paper, we propose a level set regularization approach combined with a
split strategy for the simultaneous identification of piecewise constant diffusion
and absorption coefficients from a finite set of optical tomography data
(Neumann-to-Dirichlet data). This problem is a high nonlinear inverse problem combining
together the
exponential and mildly ill-posedness of  diffusion and absorption coefficients,
respectively.  We prove that the parameter-to-measurement map satisfies sufficient
conditions (continuity in the $L^1$ topology) to guarantee regularization properties
of the proposed level set approach. On the other hand, numerical
tests considering different configurations bring new ideas on how to
propose a convergent split strategy for the simultaneous identification of the
coefficients. 
The behavior and performance of the proposed numerical strategy is illustrated with some numerical examples.
\\
\\
2000 Mathematics Subject Classification: 49N45, 65N21, 74J25.
\\
\\
Key words: Optical Tomography, Parameter Identification, Level Set
Regularization, Numerical Strategy .
\end{abstract}

\pagestyle{plain}

\section{Introduction}
\label{sec:1}

Optical tomography has demonstrated to be a powerful technique to obtain relevant
physiological information of tissues in a non-invasive manner. The technique relies
on the object under study being at least partially light-transmitting or translucent, so
it works best on soft tissues such as breast and brain tissue
\cite{Ntz99,Hebden97}. By monitoring spatial-temporal variations in the light
absorption and scattering properties of tissue, regional variations in hemoglobin concentration
or blood oxygen saturation can be calculated. 
For a complete overview on optical tomography modalities the reader can consult the
topical reviews \cite{Arridge99, GHA2005} and references therein.

A full description of light propagation in tissue is provided by the radiative transport
equation. However, in this contribution we are interested in the so called static
diffuse optical tomography (DOT). In DOT, light in the near infrared spectral region is
used to measure the optical properties of physiological tissue. In this case, denoting
the photon density by $u$, the equation to consider is the following:
\begin{align}\label{eq:1}
- \nabla \cdot (a(x) \nabla u) + c(x)u & = 0 \quad \mbox{in}
\,\, \Omega   \\
a(x) \frac{\partial u}{\partial \nu}  & = \label{eq:2}
                g \quad \mbox{on}\,\, \Gamma\,,
\end{align}
where $\Omega\subset \R^N$, $N\in \{2,3,4\}$, is open, bounded and
connected with Lipschitz boundary denoted by $\Gamma$, the diffusion and absorption
coefficients $a(x)$ and $c(x)$, respectively, are measurable real-valued functions
and $\nu$ is the outer-pointing normal vector. Moreover, $g\in
H^{-1/2}(\Gamma)$ is the Neumann boundary data. Such boundary condition can be interpreted as the exitance on $\Gamma$.

It is worth mentioning that equation~\eqref{eq:2} is a simplified way of modeling light fluence
boundary condition in diffuse optical tomography, since a more realistic
description is to consider a Robin boundary condition \cite{SAHD1995,
SchArrige99}. However, we believe that our simplified boundary model already
contains the essential aspects for the theoretical study that we present in this
work. 
Further, this setting agrees with the uniqueness identification result derived by Harrach
in \cite{Harrach09}. For more details about boundary conditions in light propagation
models we recommend to consult \cite{SAHD1995, SchArrige99} and references therein. 

Since the optical properties within tissue are determined by the values of the 
diffusion and absorption coefficients, the problem of interest
in DOT is the simultaneous identification of both coefficients from measurements of
near-infrared diffusive light along the tissue boundary.

In this contribution, we proposed a level set regularization approach~\cite{FSL05, CL2010,
CLT09A, CLT09} combined with a split strategy for the simultaneous identification of
piecewise constant diffusion $a(x)$ and absorption $c(x)$ coefficients in
\eqref{eq:1}--\eqref{eq:2}, from 
a finite set of available measurements of the photon density $h: = u|_{\Gamma}$, corresponding to
inputs $g \in H^{-1/2}(\Gamma)$ in \eqref{eq:1}--\eqref{eq:2}.

\medskip
{\bf Related works:}
In~\cite{KALVK99}, a Levenberg--Marquardt method
for recovering internal boundaries of piecewise
constant coefficients of an elliptic PDE as \eqref{eq:1} was
implemented. The proposed method is based on the series expansion
approximation of the smooth boundaries and on the finite
element method. However, in \cite{KALVK99}, there is not a
theoretical result that guarantees regularizing properties of the
iterated approximated solution. Indeed, as far as the authors are
aware, there is not theoretical regularization approaches in the literature for
recovering the pair of coefficients $(a,c)$ in \eqref{eq:1} 
from boundary data.

In~\cite{Xu02}, the authors did a carefully designed experiment aimed
to provide solid evidence that both absorption and scattering images
of a heterogeneous scattering media can be reconstructed independently
from diffuse optical tomography data. The 
authors also discuss
the absorption scattering cross-talk issue.

Although it is well known that the identification 
of $a$ and $c$
simultaneously is not possible 
in a general case~\cite{ArridgeLio98},
recently B. Harrach~\cite{Harrach09} obtained 
a uniqueness result for the simultaneous
recovery of $a$ and $c$ in \eqref{eq:1}--\eqref{eq:2} 
assuming
that $a \geq a_0 >0$ is piecewise constant and $c\in L^\infty_+$
\footnote{The subscript $'+'$ denotes positive essential infima.} is piecewise
analytic. Under this condition both parameters are simultaneously uniquely determined by knowledge of all possible pairs of Neumann and
Dirichlet boundary values  $a\partial_{\nu}u|_{\tilde{\Gamma}}$ and
$u|_{\tilde{\Gamma}}$ on an arbitrarily small open set
$\tilde{\Gamma}$ of the boundary $\Gamma$. The difference between the work of B. Harrach~\cite{Harrach09}
and our work is that here we are considering a more practical approach:
we only have access to a finite number of Neumann-Dirichlet pairs.

We also remark that the  quantitative
photoacoustic tomography problem (QPAT), in the diffusive approach,
also aims to simultaneous recover $(a,c)$ of an elliptic boundary valued problem.
See for example \cite{TCKA12} and references therein. However,
in the QPAT situation the solution of the ``first inverse problem"
generate internal data for the reconstruction. In this sense, the
QPAT problem is very different to the identification problem that we
are facing here.

\medskip
{\bf Novelties:} The novelties of this contribution are divided as follow:

\begin{itemize}
\item We prove the 
continuity of the parameter-to-measurement (forward)
map $F$ (defined in \eqref{eq:def-F}) 
in the $[L^1(\Omega)]^2$ topology.
It is done in Theorem~\ref{th:continuity1} 
of Section~\ref{sec:Properties-F}, and 
it is possible thanks to a 
generalization of Meyers' Theorem \cite{M63} that prove
the regularity of the solution of 
\eqref{eq:1}--\eqref{eq:2} in
$W^{1,p}(\Omega)$ for some $p> 2$ 
(see Theorem~\ref{th:Meyers}).
The proof of Theorem \ref{th:Meyers}
is presented in detail in the Appendix. 

In Section~\ref{sec:Modeling}, we introduce a level set approach. In
contrast to the seminal approach of Santosa on level
set for inverse problems~\cite{San95}, our approach consists in a parametrization
of the non-smooth admissible set of parameters with a pair of
$H^1(\Omega)$ functions concatenated with a restriction of the
search space using nonlinear constraints. Such approach allows us to
enforce the desired additional properties on the pair of parameters
$(a,c)$ (namely: $(a,c)$ is a pair of piecewise constant functions
describing the high diffusion and absorption contrast between the
optical properties of the object) that are not smooth.

Given the continuity of $F$ in the $[L^1(\Omega)]^2$ topology, it is now a
standard result to prove that the level set approach is a regularization method
as in the classical theory of regularization~\cite{CLT09A, CLT09, CL2010, CL2010_1}. Therefore, we only point out the convergence
and stability results without a formal proof in Subsection~\ref{sec:regularization}.

\item Another contribution of the proposed level set approach is related to the
numerical implementation presented in Section~\ref{sec:numerical}. It is worth
to remind the reader that we aim to simultaneous reconstruct the
pair $(a,c)$ of piecewise constant functions from a finite set of
optical measurements. With this aim, we first run several numerical
experiments in order to recover the absorption coefficient $c$,
based on either total or partial knowledge of $a$. From this
experiments we observed that the level set method for identifying $c$
performs well, even if a good approximation of the exact value of $a$ is not known.
This is presented in Subsection~\ref{ssec:numer-c}. 
After that, in
Subsection~\ref{ssec:numer-a} we run another 
set of experiments but
now concerning the identification 
of the diffusion coefficient $a$,
based on either total or partial knowledge of $c$. 
In this case, we
observed that the level set method for identifying 
$a$ performs well if
a good approximation of the exact 
value of $c$ is available, but may generate a
sequence $a_k$ that does not approximate the exact 
$a$ if the initial guess
for the coefficient $c$ is far from its exact value. Such features of the
identification problems suggested one of the main results related to the
numerical perspective presented in Subsection~\ref{ssec:numer-ac}.
Given a initial guess $(a_0, c_0)$, we adopted the strategy to
``freeze'' the coefficient $a_k = a_0$ during the first iterations, and
to iterate the algorithm only with respect to the coefficient $c$. We
follow this strategy until the iterated sequence $c_k$ stagnates.
Then, we freeze the absorption coefficient $c = c_k$ and iterate
the algorithm only with respect to $a$ until the iterated sequence $a_k$ stagnates. Finally, we iterate
both coefficients simultaneously. This numerical strategy has not only demonstrated that gives very good results but also reduces significantly the computational effort.
\end{itemize}

This article is organized as follows. In Section~\ref{sec:Properties-F} we first introduce the parameter-to-measurement (forward)
map F and after that, in Theorem~\ref{th:continuity1}, the continuity of this forward map is demonstrated. Then, in Section~\ref{sec:Modeling}
we present the level set approach, we introduce the concept of generalized minimizers for an appropriate energy functional and we establish the regularization properties.
In other words, we prove the well-posedness result and also convergence results for
exact and noisy data. In Section~\ref{sec:tow-num-sol}, we introduce a smooth functional that is used in the numerical examples. We prove
that the minimizers of such functional converge to a minimizer of the early energy functional in appropriated topologies. 
Section~\ref{sec:numerical} is devoted to numerical experiments and a split strategy is developed. We end this contribution
in Section~\ref{sec:Conclusions} with some conclusions and further developments. In the Appendix, we give a proof for
a generalization of a Meyers' type theorem (see Theorem~\ref{th:Meyers}) about the regularity of the solution of \eqref{eq:1}--\eqref{eq:2}
that are used in the proof of the continuity of the forward map $F$.

\vspace{1cm}
{\bf General Notation.} We denote by $\R^N$, $N\geq 2$, the $N$-dimensional Euclidean space
endowed with the usual scalar product $x\cdot y=\sum_{i=1}^N\,x_iy_i$
and norm $|x|=\sqrt{x\cdot x}$, where $x=(x_i)_{i=1}^N$
and $y=(y_i)_{i=1}^N$. Given two normed vector spaces $(\mathcal{X},\|\cdot\|_{\mathcal{X}})$ and $(\mathcal{Y},\|\cdot\|_{\mathcal{Y}})$
we always consider the product space $\mathcal{X}\times \mathcal{Y}$ endowed with
the product topology generated by the norm $\|(x,y)\|:=\|x\|_{\mathcal{X}} + \|y\|_{\mathcal{Y}}$ (or the equivalent norms
$\left(\|x\|_{\mathcal{X}}^2 + \|y\|_{\mathcal{Y}}^2\right)^{1/2}$ or $\max\{\|x\|_{\mathcal{X}},\|y\|_{\mathcal{Y}} \}$), where $(x,y)\in \mathcal{X}\times \mathcal{Y}$.
We also use the short notation $\mathcal{X}^2=\mathcal{X}\times \mathcal{X}$.

\section{The Parameter to Measurement Map} \label{sec:Properties-F}

We start this section assuming that coefficient $a(x)$ is known for all $ x\in \Gamma$.  Then, for each
input $g \in H^{-1/2}(\Gamma)$ in \eqref{eq:1}--\eqref{eq:2}, we define the
\emph{parameter-to-measurement} (forward) map
\begin{eqnarray}
\label{eq:def-F}
F := F_g\,: D(F) \subset L^1(\Omega)\times L^1(\Omega)  &    \to    &  H^{1/2}(\Gamma)\\
     (a,c)   &  \mapsto  &  h := u|_{\Gamma},\nonumber
\end{eqnarray}
where $u = u(g)$ is the unique 
solution of
\eqref{eq:1}--\eqref{eq:2} given the boundary data $g$ and the pair $(a,c)$ in the
parameter space $D(F)$ defined as:
\begin{df}
\label{df:admissible} 
Denote by $D(F)$ the set of pairs of $L^1(\Omega)$
functions $(a,c)$ on $\Omega$ satisfying the 
following condition:
%


\begin{align}
\label{eq:def.df7}
 0<\underline{a} \leq a(x) \leq \overline{a},
\quad 0<\underline{c}
\leq c(x) \leq \overline{c} \quad \forall x\;\mbox{a.e. in\,}\,
\Omega,
\end{align}
where $\underline{a},\overline{a}$, \underline{c}
and $\overline{c}$ are known positive real numbers.

\end{df}
\noindent We now make some comments about
Definition~\ref{df:admissible} and the definition of the forward map $F$.
First, it is easy to check that $D(F)$ is a convex subset of $[L^1(\Omega)]^2$.
Second, the forward map $F$ is well-defined because for each $(a,c)\in
D(F)$ there exists a unique solution $u \in H^1(\Omega)$ of~\eqref{eq:1}--\eqref{eq:2}
(see~\cite{DL88}).
Third, since $D(F)$ depends on the scalars
$\underline{a},\overline{a}$, \underline{c} and $\overline{c}$ ,
it turns out that $F$ also depends on the latter scalars. However, we
are assuming that the scalars are known, fixed and independent of each given Neumann data $g$ in \eqref{eq:2}.
Fourth, we are not assuming any smoothness condition on the pair
$(a,c)\in D(F)$. In particular the latter fact allow us
to consider solutions of~\eqref{eq:1}--\eqref{eq:2} corresponding
to piecewise constant coefficients.

This section is devoted to prove the continuity of the forward
map $F$ in the $[L^1(\Omega)]^2$ topology. In order to make
such proof  easier to understand we will consider the \emph{parameter-to-solution}
map 
\begin{align}
\label{operator:G}
 G:=G_g\,:\,  D(F) \subset [L^1(\Omega)]^2 & \longrightarrow
 H^1(\Omega)\nonumber \\
 (a,c) & \longmapsto G_g(a,c):=u\,,
\end{align}
where $u = u(g)\in H^1(\Omega)$ is the unique solution
of~\eqref{eq:1}--\eqref{eq:2} for each input data $g\in H^{-1/2}(\Gamma)$ and parameters
$(a,c)\in D(F)$. Moreover, we will use the fact that any solution of~\eqref{eq:1}--\eqref{eq:2} 
satisfies the following weak formulation \cite{DL88}:

\begin{align}
\label{eq:weakf}
 \int_{\Omega}\,a\nabla u\cdot \nabla \varphi \,dx + \int_{\Omega}\,cu\varphi \,dx
= \int_{\Gamma}g\varphi \, d\sigma\qquad \forall \varphi\in H^1(\Omega).
\end{align}

\begin{remark}\label{rm:1}
Given the definition of the forward map $F$ in~\eqref{eq:def-F}, and using the map defined in \eqref{operator:G}, we have that $F$
can be written as
\begin{align}
\label{eq:fbg}
 F=\gamma_0 \circ G,
\end{align}
where $\gamma_0:H^1(\Omega)\to H^{1/2}(\Gamma)$ is the trace operator of order zero \cite{DL88}.
Since the operator $\gamma_0$ is linear and continuous~\cite{DL88}, the continuity of $F$
follows from the continuity of $G$.
\end{remark}



In order to prove the 
continuity of the operator $G$ defined in \eqref{operator:G}
in the desired topology, 
we will use the following 
generalization of Meyers' Theorem \cite{GM99}
on the regularity of the solution 
of \eqref{eq:1}--\eqref{eq:2}. The proof
of Theorem \ref{th:Meyers}
is presented in the Appendix.

\begin{theo}[Generalized Meyers' Theorem]
\label{th:Meyers}
Let $\Omega\subset \R^N$, $N\in \{2,3,4\}$, be a connected bounded open set with a Lipschitz boundary $\Gamma$
and let $ (a,c) \in D(F)$. Then, there exists 
a real number $p_M>2$ (depending only on 
$\Omega$, $\underline{a}, \overline{a}, \underline{c}$ and $\overline{c}$) such that the
following condition hold for every $p\in (2,p_M)$:
If $g\in W^{1-(1/q),q}(\Gamma)'$, where $q := p/(p-1)$, then the unique solution $u$
of~\eqref{eq:1}--\eqref{eq:2} belongs to $W^{1,p}(\Omega)$.
\end{theo}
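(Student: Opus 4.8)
The plan is to prove a higher-integrability (reverse Hölder / Meyers-type) estimate for the gradient of the solution $u$ of the mixed problem \eqref{eq:1}--\eqref{eq:2}, treating the equation as a perturbation of the Laplacian. First I would reduce to a homogeneous divergence-form equation by subtracting a suitable lift of the Neumann data: given $g\in W^{1-(1/q),q}(\Gamma)'$ one finds $w\in W^{1,p}(\Omega)$ with $a\,\partial_\nu w = g$ in the weak sense (this is exactly where the assumed regularity of $g$ relative to the exponent $q=p/(p-1)$ enters), so that $v:=u-w$ solves a problem with homogeneous boundary flux and a right-hand side in the dual space $W^{1,q}(\Omega)'$. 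The absorption term $c\,u$ is lower order and, since $0<\underline c\le c\le\overline c$, it is handled either by absorbing it into the right-hand side or by noting it only improves coercivity; the essential difficulty is the principal part $-\nabla\cdot(a\nabla u)$ with merely $L^\infty$, uniformly elliptic coefficient $a$, $\underline a\le a\le\overline a$.

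The core step is the classical Meyers argument in the form given by Gröger/Meyers for mixed boundary value problems: one establishes a Caccioppoli-type inequality on interior balls and on half-balls centered at the boundary (using that $\Gamma$ is Lipschitz, so boundary charts flatten the boundary with bi-Lipschitz maps), then invokes Gehring's lemma / Giaquinta--Modica reverse-Hölder self-improvement to conclude that $\nabla u\in L^{p}_{\mathrm{loc}}$ and in fact globally in $W^{1,p}(\Omega)$ for all $p$ in an interval $(2,p_M)$, where $p_M>2$ depends only on the dimension $N$, the Lipschitz character of $\Omega$, and the ellipticity ratio $\overline a/\underline a$ (and analogously $\overline c/\underline c$). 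Concretely I would: (i) write the weak formulation \eqref{eq:weakf} and test with $\varphi=u\eta^2$ for cutoffs $\eta$ to get the Caccioppoli inequality; (ii) combine with the Sobolev--Poincaré inequality to obtain a reverse Hölder inequality with increasing supports for $|\nabla u|$; (iii) apply Gehring's lemma to gain the exponent $p>2$; (iv) patch the local estimates across a finite cover of $\overline\Omega$ by interior balls and boundary half-balls, using the Lipschitz boundary to transfer the estimate through charts, and absorb the boundary and data terms using the duality pairing with $g$. Since all constants in (i)--(iii) depend only on $N$, the Lipschitz constants of the charts, and the bounds $\underline a,\overline a,\underline c,\overline c$, the resulting threshold $p_M$ has the stated dependence.

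The main obstacle I anticipate is the treatment of the boundary term near points where $\Gamma$ is only Lipschitz (not $C^1$): one must show that the flattening change of variables preserves uniform ellipticity with constants controlled by the Lipschitz character, and that the Neumann data term $\int_\Gamma g\varphi\,d\sigma$, rewritten after the lift, produces a right-hand side in $W^{1,q}(\Omega)'$ with norm controlled independently of the local scale — this is precisely why the hypothesis is phrased as $g\in W^{1-(1/q),q}(\Gamma)'$, the natural dual trace space matching $W^{1,p}$ regularity. A secondary technical point is that the self-improvement exponent from Gehring's lemma is only slightly above $2$, so one only obtains $p\in(2,p_M)$ for some $p_M>2$ rather than a specified value; this is acceptable for the application to the continuity of $G$, where any fixed $p>2$ suffices. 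I would defer the full verification of these charting and duality estimates to the Appendix, as the excerpt indicates, and in the body of the proof emphasize the Caccioppoli--Gehring mechanism and the bookkeeping of constant dependence.
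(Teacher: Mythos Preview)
Your approach is a valid route in principle, but it is genuinely different from (and considerably more laborious than) what the paper does. You propose to redo the Meyers machinery from scratch: lift the Neumann data, derive Caccioppoli inequalities on interior balls and boundary half-balls, apply Gehring's reverse-H\"older lemma, and patch over a cover of $\overline\Omega$. The paper instead performs a short \emph{reduction to an existing result}: it subtracts the mean of $u$ to obtain $\tilde u\in H^1_*(\Omega)$, moves the absorption term $cu$ to the right-hand side, and observes that $\tilde u$ then satisfies exactly the weak formulation treated by Gallou\"et--Monier \cite[Theorem~2]{GM99} for the pure Neumann problem $-\nabla\cdot(a\nabla\tilde u)=f$. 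The only work left is to check that the functional $f:\varphi\mapsto\int_\Gamma g\varphi\,d\sigma-\int_\Omega cu\varphi\,dx$ lies in $W^{1,q}(\Omega)'$; the boundary part is immediate from the trace theorem and the hypothesis on $g$, and the absorption part follows from the Sobolev embeddings $W^{1,q}(\Omega)\hookrightarrow L^{q^*}(\Omega)$ and $H^1(\Omega)\hookrightarrow L^s(\Omega)$ with $s=(q^*)'$. This last embedding is precisely where the dimensional restriction $N\in\{2,3,4\}$ enters (one needs $s\le 2^*=2N/(N-2)$), a point your outline does not account for.

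One concrete issue with your plan: the lift you describe, ``find $w\in W^{1,p}(\Omega)$ with $a\,\partial_\nu w=g$ in the weak sense'', is circular as stated, since producing such a $w$ with $W^{1,p}$ regularity for rough $a$ is essentially the theorem you are trying to prove. You would need either to lift with the Laplacian (so that the conormal involves a constant coefficient) and carry the resulting divergence-form error term, or---as the paper does---avoid lifting altogether and feed the boundary term directly into the right-hand side functional. Either fix is routine, but the paper's black-box reduction to \cite{GM99} is by far the shortest path.
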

Next, we present a lemma that is used in the main theorem of this section.
\begin{lemma}
\label{lemma:0}
Let $h$ be a measurable function such that $|h(x)| \leq M$ for all
$x\;\mbox{a.e. in\,}\,\Omega$,
for some constant $M>0$. Then, $h\in L^s(\Omega)$ for all $1\leq s < \infty$
and
\begin{align*}
\norm{h}_{L^s(\Omega)} \leq M^{(s-1)/s} \norm{h}^{1/s}_{L^1(\Omega)}.
\end{align*}
\end{lemma}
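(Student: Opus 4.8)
The plan is to prove the interpolation-type bound by a direct computation using the pointwise bound $|h(x)| \leq M$. First I would observe that since $\Omega$ is bounded and $h$ is essentially bounded, $h \in L^\infty(\Omega) \subset L^s(\Omega)$ for every $s \in [1,\infty)$, so the statement makes sense. For the norm estimate, the key idea is to split off one factor of $|h|$ from the power $|h|^s$ and bound that factor by $M$: namely, write $|h(x)|^s = |h(x)|^{s-1} |h(x)| \leq M^{s-1} |h(x)|$ for a.e.\ $x \in \Omega$.

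Integrating this inequality over $\Omega$ gives
\[
\int_\Omega |h(x)|^s \, dx \;\leq\; M^{s-1} \int_\Omega |h(x)| \, dx \;=\; M^{s-1} \norm{h}_{L^1(\Omega)}.
\]
Taking the $s$-th root of both sides yields
\[
\norm{h}_{L^s(\Omega)} \;=\; \left( \int_\Omega |h|^s \, dx \right)^{1/s} \;\leq\; M^{(s-1)/s} \, \norm{h}_{L^1(\Omega)}^{1/s},
\]
which is exactly the claimed inequality.

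There is essentially no obstacle here: the proof is a one-line estimate followed by taking a root, and the only thing to be careful about is the bookkeeping of exponents (that $M^{s-1}$ becomes $M^{(s-1)/s}$ after the root, and $\norm{h}_{L^1}$ becomes $\norm{h}_{L^1}^{1/s}$). One should also note in passing that the case $s=1$ is trivial (both sides equal $\norm{h}_{L^1(\Omega)}$, since $M^0 = 1$), so the formula is consistent across the whole range $1 \leq s < \infty$.
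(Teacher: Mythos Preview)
Your proof is correct and follows exactly the same approach as the paper: split $|h|^s = |h|^{s-1}|h|$, bound $|h|^{s-1}\le M^{s-1}$ pointwise, integrate, and take the $s$-th root.
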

\begin{proof}
Note that
\begin{align*}
\norm{h}^s_{L^s(\Omega)} = \int_{\Omega}|h(x)||h(x)|^{s-1} \, dx \leq M^{s-1}
\norm{h}_{L^1(\Omega)},
\end{align*}
which readily implies the desired result.
\end{proof}


In the next theorem we prove the continuity of the \emph{parameter-to-solution}
map $G$ in the $[L^1(\Omega)]^2$ topology. Then, by Remark~\ref{rm:1} we obtain the desired continuity of the \emph{parameter-to-measurement} map $F$.

\begin{theo}\label{th:continuity1}
Let $p\in (2,p_M)$, where $p_M>2$ is given by Theorem~\ref{th:Meyers}, and let $q:=p/(p-1)$.
Then, for any $g\in W^{1-(1/q),q}(\Gamma)'$  the operator
$G$ defined in~\eqref{operator:G} is continuous in the $[L^1(\Omega)]^2$ topology.

As a consequence, for any $g\in W^{1-(1/q),q}(\Gamma)'$, the
forward map $F$ defined in~\eqref{eq:def-F} is also continuous
in the $[L^1(\Omega)]^2$ topology.
\end{theo}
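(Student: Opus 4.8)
\medskip
\noindent The plan is to prove that $G$ is \emph{sequentially} continuous at an arbitrary $(a,c)\in D(F)$ (which suffices, since $[L^1(\Omega)]^2$ is metrizable) and then to obtain the continuity of $F=\gamma_0\circ G$ from Remark~\ref{rm:1}. So I would fix a sequence $(a_k,c_k)\to(a,c)$ in $[L^1(\Omega)]^2$, all lying in $D(F)$ — so that the two-sided bounds \eqref{eq:def.df7} hold uniformly in $k$ — and set $u_k:=G(a_k,c_k)$ and $u:=G(a,c)$. The first step is an a priori bound: testing \eqref{eq:weakf} for $u_k$ with $\varphi=u_k$ and using $a_k\ge\ul{a}$, $c_k\ge\ul{c}$ together with the continuity of the trace, one gets $\|u_k\|_{H^1(\Omega)}\le C$ with $C$ independent of $k$. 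Then I would invoke Theorem~\ref{th:Meyers}: since each $(a_k,c_k)\in D(F)$ and $g\in W^{1-(1/q),q}(\Gamma)'$, every $u_k$ (and $u$) belongs to $W^{1,p}(\Omega)$ for the fixed $p\in(2,p_M)$, and — this is the point — the proof of that theorem also provides a bound $\|u_k\|_{W^{1,p}(\Omega)}\le C$ depending only on $\Omega,\ul{a},\ol{a},\ul{c},\ol{c}$ and $g$, hence uniform in $k$. The gain of integrability $p>2$ is exactly what the rest of the argument needs.

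\medskip
\noindent Second, I would identify the weak limit. By the uniform bound, along a subsequence $u_k\rightharpoonup\bar u$ in $W^{1,p}(\Omega)$ (hence in $H^1(\Omega)$) and, by Rellich--Kondrachov, $u_k\to\bar u$ in $L^2(\Omega)$. I then pass to the limit in \eqref{eq:weakf} against a \emph{fixed} $\varphi\in H^1(\Omega)$, noting that the right-hand side is independent of $k$. The zeroth-order term passes to the limit by standard arguments, using $c_k\to c$ in every $L^s(\Omega)$, $s<\infty$ (Lemma~\ref{lemma:0} applied to $c_k-c$, which is bounded by $\ol{c}$), the strong $L^2(\Omega)$ convergence of $u_k$, and the Sobolev embedding $H^1(\Omega)\hookrightarrow L^{2^*}(\Omega)$ with $2^*>2$ since $N\le 4$. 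For the principal term I would split
\[
\int_\Omega a_k\nabla u_k\cdot\nabla\varphi\,dx=\int_\Omega(a_k-a)\nabla u_k\cdot\nabla\varphi\,dx+\int_\Omega a\,\nabla u_k\cdot\nabla\varphi\,dx ;
\]
the second integral tends to $\int_\Omega a\,\nabla\bar u\cdot\nabla\varphi\,dx$ because $a\nabla\varphi\in L^2(\Omega)$ and $\nabla u_k\rightharpoonup\nabla\bar u$ in $L^2(\Omega)$, while for the first I would use Hölder with exponents $r,p,2$ satisfying $1/r=1/2-1/p$ (so $r<\infty$, precisely because $p>2$):
\[
\Bigl|\int_\Omega(a_k-a)\nabla u_k\cdot\nabla\varphi\,dx\Bigr|\le\|a_k-a\|_{L^r(\Omega)}\,\|\nabla u_k\|_{L^p(\Omega)}\,\|\nabla\varphi\|_{L^2(\Omega)},
\]
where $\|a_k-a\|_{L^r(\Omega)}\to0$ by Lemma~\ref{lemma:0} while $\|\nabla u_k\|_{L^p(\Omega)}$ stays bounded. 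Hence $\bar u$ satisfies \eqref{eq:weakf} with coefficients $(a,c)$, so $\bar u=u$ by uniqueness; since this holds along every subsequence, $u_k\rightharpoonup u$ in $W^{1,p}(\Omega)$ and $u_k\to u$ in $L^2(\Omega)$ for the whole sequence.

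\medskip
\noindent Third, I would upgrade the weak $H^1$ convergence to a strong one through the energy identity obtained by taking $\varphi=u_k$ in \eqref{eq:weakf}. Since $q\le 2$, there is a continuous embedding $H^{1/2}(\Gamma)\hookrightarrow W^{1-(1/q),q}(\Gamma)$, so $g$ defines a bounded linear functional on $H^{1/2}(\Gamma)$; together with $\gamma_0u_k\rightharpoonup\gamma_0u$ in $H^{1/2}(\Gamma)$ (weak continuity of the trace) this gives $\int_\Gamma gu_k\,d\sigma\to\int_\Gamma gu\,d\sigma$, and the same estimates as above give $\int_\Omega c_ku_k^2\,dx\to\int_\Omega cu^2\,dx$. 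Therefore $\int_\Omega a_k|\nabla u_k|^2\,dx=\int_\Gamma gu_k\,d\sigma-\int_\Omega c_ku_k^2\,dx\to\int_\Gamma gu\,d\sigma-\int_\Omega cu^2\,dx=\int_\Omega a|\nabla u|^2\,dx$. Using $\nabla u\in L^p(\Omega)$, the same Hölder estimate shows $a_k\nabla u\to a\nabla u$ in $L^2(\Omega)$, so $\int_\Omega a_k\nabla u_k\cdot\nabla u\,dx\to\int_\Omega a|\nabla u|^2\,dx$ (strong times weak) and likewise $\int_\Omega a_k|\nabla u|^2\,dx\to\int_\Omega a|\nabla u|^2\,dx$; expanding $\int_\Omega a_k|\nabla u_k-\nabla u|^2\,dx$ into these three terms shows it tends to $0$, and $a_k\ge\ul{a}>0$ then forces $\nabla u_k\to\nabla u$ in $L^2(\Omega)$. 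With $u_k\to u$ in $L^2(\Omega)$ this yields $u_k\to u$ in $H^1(\Omega)$, i.e.\ $G$ is continuous; by Remark~\ref{rm:1}, $F=\gamma_0\circ G$ is then continuous in the $[L^1(\Omega)]^2$ topology as well.

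\medskip
\noindent I expect the main obstacle to be the mixed term $\int_\Omega(a_k-a)\nabla u_k\cdot\nabla\varphi\,dx$ (and its analogues $\int_\Omega(a_k-a)\nabla u\cdot w\,dx$ appearing in the energy step), where the coefficient converges only in $L^1$: with merely $\nabla u_k\in L^2(\Omega)$ one is left with the factor $\|a_k-a\|_{L^\infty(\Omega)}$, which need not tend to $0$. This is precisely what forces the detour through the generalized Meyers theorem (Theorem~\ref{th:Meyers}), whose higher integrability $p>2$ creates the room in Hölder's inequality needed to absorb the $L^1$-convergent factor via Lemma~\ref{lemma:0}. The remaining difficulty — the weak-to-strong upgrade in $H^1(\Omega)$ — is more routine and is handled by the energy-identity computation described above.
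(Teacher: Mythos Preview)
Your proof is correct, but it takes a substantially different route from the paper's. The paper argues directly: given $(a',c')$ fixed and $(a,c)$ arbitrary in $D(F)$, it subtracts the two weak formulations, tests with $w:=u-u'$, and arrives at
\[
\int_\Omega a|\nabla w|^2\,dx+\int_\Omega c\,w^2\,dx=\int_\Omega(a'-a)\nabla u'\cdot\nabla w\,dx+\int_\Omega(c'-c)\,u'w\,dx.
\]
Applying Meyers' theorem to $u'$ only, then H\"older with exponents $(s,p,2)$ where $s=2p/(p-2)$, and Lemma~\ref{lemma:0} to pass from $\|a-a'\|_{L^s}$, $\|c-c'\|_{L^s}$ to the $L^1$ norms, yields in one stroke the explicit local H\"older estimate
\[
\|G(a,c)-G(a',c')\|_{H^1}\le \widetilde M\,\|u'\|_{W^{1,p}}\bigl(\|a-a'\|_{L^1}+\|c-c'\|_{L^1}\bigr)^{1/s}.
\]
By contrast, your argument is the weak-compactness\,/\,energy-method route: uniform $W^{1,p}$ bounds along the sequence, weak limit identification, then a norm-convergence upgrade via the energy identity. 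Two points of comparison are worth noting. First, the paper's proof uses Meyers only at the \emph{fixed} point $(a',c')$, whereas you need a \emph{uniform} $W^{1,p}$ bound on the whole sequence $u_k$; this is true (it follows from the appendix proof together with the uniform $H^1$ bound), but it is not contained in the statement of Theorem~\ref{th:Meyers} as written, so you are appealing to its proof rather than its conclusion. Second, the direct approach buys more: it gives a quantitative (H\"older) modulus of continuity with an explicit constant, and it is considerably shorter, avoiding subsequences, Rellich--Kondrachov, and the three-term expansion of $\int a_k|\nabla u_k-\nabla u|^2$. Your method, on the other hand, is a robust template that would adapt to situations where such a clean difference identity is unavailable.
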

\begin{proof} 
Let $g\in W^{1-(1/q),q}(\Gamma)'$ and consider
the corresponding solutions $u'=u(a', c')$ and  $u=u(a, c)$ of~\eqref{eq:1}--\eqref{eq:2}
with parameters $(a',c'), (a, c) \in D(F)$, respectively.

Since $(a',c',u')$ and $(a,c,u)$ satisfy the identity~\eqref{eq:weakf} for all $\varphi\in H^1(\Omega)$ we have
\begin{align}
\label{eq:273}
\int_\Omega (a \nabla u-a'\nabla u')\cdot\nabla \varphi \,dx +
\int_\Omega(c u-c'u')\varphi \,dx = 0\,.
\end{align}
Defining $w:=u-u'\in H^1(\Omega)$ and using~\eqref{eq:273} with $\varphi=w$
we obtain (after some algebraic manipulations)
\begin{align*}
\int_\Omega \big(a - a'\big) \nabla u'\cdot \nabla w \,dx + \int_\Omega a \nabla
w \cdot \nabla w \,dx + \int_\Omega \big(c - c'\big) u'w \,dx + \int_\Omega c w w \,dx = 0,
\end{align*}
which in turn is equivalent to
\begin{align}
\label{eq:274}
\int_\Omega a(x) |\nabla w|^2 \,dx + \int_\Omega c(x) |w|^2 \,dx = \int_\Omega
\big(a' - a\big) \nabla u'\cdot \nabla w \,dx +  \int_\Omega \big(c'- c\big) u' w \,dx.
\end{align}
In view of Theorem~\ref{th:Meyers} 
(for $(a',c')$) we have 
$u'\in W^{1,p}(\Omega)$. Thus, defining $s:=2p/(p-2)$,
it follows from~\eqref{eq:274},~\eqref{eq:def.df7}, 
Lemma~\ref{lemma:0}
and the H\"{o}lder inequality (note that $1/s+1/p+1/2=1$)
that
%
\begin{align*}
\min\{\underline{a},\underline{c}\}\|w\|^2_{H^1}&\leq \|a'-a\|_{L^{s}}
\norm{\nabla u'}_{L^p}\norm{\nabla w}_{L^2}
+\|c'-c\|_{L^{s}}\norm{u'}_{L^p}\norm{w}_{L^2}\\
&\leq \left(\|a'-a\|_{L^{s}}\norm{\nabla u'}_{L^p}+\|c'-c\|_{L^{s}}\norm{u'}_{L^p}\right)\norm{w}_{H^1}\\
&\leq 2\left(\max\{\overline{a} - \underline{a},\overline{c}-\underline{c}\}\right)^{(s-1)/s}\norm{u'}_{W^{1,p}}\left(\norm{a'-a}_{L^1}+\norm{c'-c}_{L^1}\right)^{1/s}\norm{w}_{H^1}.
\end{align*}

The latter inequality combined with 
the facts that  $G_g(a',c')=u'$,
$G_g(a,c)=u$ (see~\eqref{operator:G}) and $w=u-u'$ give
\begin{align}
\label{eq:cont.g}
 \norm{G_g(a,c)-G_g(a',c')}_{H^1}
\leq \widetilde{M}
\norm{u'}_{W^{1,p}}
\left(\norm{a-a'}_{L^1}+\norm{c-c'}_{L^1}\right)^{1/s},
\end{align}
which proves the continuity of $G_g$ in the $[L^1(\Omega)]^2$ topology, where
$\widetilde{M} := \dfrac{2\left(\max\{\overline{a} - \underline{a},\overline{c}-\underline{c}\}\right)^{(s-1)/s}}
{\min\{\underline{a},\underline{c}\}}$.

The last statement of the theorem now follows easily from the first one and Remark~\ref{rm:1}.
\end{proof}
\noindent
We now make a few comments about Theorem~\ref{th:continuity1}. First, according to Theorem~\ref{th:Meyers}, the real
number $p_M>2$ depends on $\Omega$ and, 
in the present setting, on $\underline{a}$, 
$\overline{a}$, $\underline{c}$ and $\overline{c}$. 
Second, since $q<2$, it follows that $W^{1-(1/q),q}(\Gamma)'\subset H^{-1/2}(\Gamma)$. As a consequence of the latter
inclusion, we have that the condition on $g$ required in Theorem~\ref{th:continuity1} is stronger than the usual
inclusion $g\in H^{-1/2}(\Gamma)$. Third, condition~\eqref{eq:cont.g} gives that both operators $G$ and $F$
are (locally) H\"{o}lder continuous in the $[L^1(\Omega)]^2$ topology.

\section{The level set framework with a finite number of experiments}\label{sec:Modeling}

It is already known that in diffuse optical tomography the full Neumann-to-Dirichlet map
(equivalently, the boundary data $h$ corresponding to
the boundary condition  $a \frac{\partial u}{\partial \nu}$ on $\Gamma$) is required to obtain
uniqueness of the parameters $(a,c)$ in \eqref{eq:1}--\eqref{eq:2}~\cite{Harrach09}.
However, in real applications, only a finite number of observations/measurements are available.
Therefore, in this work we consider that 
we only have access to a  
quantity $\ell \in \mathbb{N}$ of well-placed experiments. In other words, the inverse
problem we tackle consist in given a finite number of inputs $g_m = a\frac{\partial u_m}{\partial
\nu}{|_\Gamma}$ and corresponding data $h_m=u_{m}{|_\Gamma}$, reconstruct simultaneously the diffusion and absorption coefficients
$(a,c)$. As indicated previously, the photon density $u_m$ satisfies 
\begin{align*}
\nabla \cdot (a \nabla u_m) + c \, u_m & = 0 \qquad \mbox{in } \Omega, \\
a \frac{\partial u_m}{\partial \nu} & = g_m\, \quad \mbox{on }
\Gamma, \qquad  m=1, \ldots, \ell.\nonumber
\end{align*}
This problem is known in the literature as the inverse problem for
the Neumann-to-Dirichlet operator with a finite number of
experiments. In this context, the identification problem can be
written in terms of the system of nonlinear equations
\begin{align}\label{eq:system-F_m}
F_m(a,c) = h_m\,, \qquad m=1, \ldots, \ell\,,
\end{align}
where  $F_m:= F_{g_m}$ is defined as in \eqref{eq:def-F}, for each
$m \in\{1,\ldots, \ell\}$.

Moreover, given the nature of the measurements, we  can not expect
that exact data $h_m \in H^{1/2}(\Gamma)$ are available. Instead,
one disposes only an approximate measured data $h_m^\delta \in
L^2(\Gamma)$ satisfying
\begin{align}\label{eq:noise-data}
\norm{h_m^\delta - h_m}_{L^2(\Gamma)} \leq \delta\,,\quad\mbox{for }
m=1,\ldots, \ell\,
\end{align}
where $\delta > 0$ is the noise level.

\begin{remark}
From Theorem~\ref{th:continuity1}, we know that each forward map $F_m$ in \eqref{eq:system-F_m} is continuous in the $[L^1(\Omega)]^2$ topology.
\end{remark}

\subsection{Modeling the parameter space: The level set framework}

In contrast with the previous section, from now on we consider that the pair
of parameters $(a,c)$ are piecewise constant function assuming two
distinct values, i.e. $a(x) \in \{a^1,a^2\}$ and $c(x) \in \{c^1,c^2\}$ a.e. in
$\Omega \subset \mathbb{R}^N$, but we still consider $(a,c) \in D(F)$. 
Hence, one can assume the existence of open and mensurable sets
$\A_1 \subset \subset \Omega$ and $\C_1 \subset \subset \Omega$
with $\mathcal{H}^1(\partial \A_1) < \infty  \mbox{ and }
\mathcal{H}^1(\partial \C_1) < \infty$,\footnote{Here
$\mathcal{H}^1(S)$ denotes the one-dimensional Hausdorff-measure of
the set $S$.} and such that $a(x)=a^1$ if $x \in \A_1$ and $a(x)=a^2$ if $x \in
\A_2:=\Omega - \A_1$; $c(x)=c^1$ if $x \in \C_1$ and $c(x)=c^2$ if $x \in \C_2: =
\Omega - \C_1$.
%
Consequently, the pair of parameters can be modeled as
\begin{equation*} 
(a(x),c(x))  = (a^2 + (a^1-a^2) \chi_{\A_1}(x), c^2 + (c^1-c^2)
\chi_{\C_1}(x)) \, ,
\end{equation*}
where $\chi_{S}$ is the indicator function of the set $S$.

\paragraph{Level set framework:}
In order to model the space of admissible parameters, that is the pair of piecewise constant functions $
(a,c)$, we consider a standard level set (SLS) approach proposed
in \cite{FSL05, CLT09A, CLT09, CL2010}. In particular, our analysis of a level set approach for piecewise constant parameters
follows essentially from the techniques derived in \cite{CLT09}. We notice that many
other level set approaches are known in the literature, see for instance \cite{CL2010_1, DL09, BurOsher05, KALVK99, ZADKS06, DA06}.
For the case where not only the discontinuities but also the values of $a$ and $c$ are unknown then one can use the ideas of the level
set approach presented in \cite{CLT09A}.
Recently, in \cite{CLT11, CL2010, CL2010_1}, piecewise constant level
set approaches (PCLS) were derived for identification of piecewise constant
parameters. The PCLS approach consists in introducing constraints
in the admissible class of level set functions in order to enforce these level set functions
to become piecewise constant. In this context, we do not need to introduce the
Heaviside projector $H$ (see below) to model the parameter space.
However, the introduction of constraints imply different
difficulties in the level set regularization analysis \cite{CLT11,
CL2010}. Advantages and disadvantages of SLS and PCLS approaches
were discussed in \cite{CLT11, CL2010}.


According to the SLS representation strategy, level set functions
$\phia, \phic: \Omega \to \mathbb R$, in $H^1(\Omega)$, are  chosen
in such a way that its zero level set $\Gamma_\phia := \{x \in
\Omega \, ; \ \phia(x) = 0\}$ and $\Gamma_\phic := \{x \in \Omega \,
; \ \phic(x) = 0\}$ define connected curves within $\Omega$ and
the discontinuities of the parameters $(a,c)$ are located ``along''
$\Gamma_\phia$ and $\Gamma_\phic$, respectively.

Introducing the Heaviside projector
$$
H(t) \ := \ \begin{cases}
                     1, & {\rm if \ } t > 0 \\
                     0, & {\rm if \ } t \le 0
                    \end{cases} ,
$$
the diffusion and absorption parameters can be written as
\begin{equation} \label{eq:def-P}
(a, c) \ =  \ \left(a^2 +(a^1-a^2) H(\phia), c^2 + (c^1 - c^2) H(\phic)\right) \ =: \ P(\phia, \phic).
\end{equation}

Notice that $(a(x), c(x))
=(a^i, c^j)$, $x \in \A_i \cap \C_j$ for $i, j \in \{ 1,2\}$, where
the sets $\A_i$ and $\C_j$ are defined by $\A_1 = \{x \in \Omega\,
:\ \phia(x) \geq 0 \}$, $\A_2 = \{x \in \Omega\, :\ \phia(x) < 0
\}$, $\C_1 = \{x \in \Omega\, :\ \phic(x) \geq 0 \}$ and $\C_2 = \{x
\in \Omega\, :\ \phic(x) < 0 \}$.
Thus, the operator $P$ establishes a straightforward relation between the level sets of $\phia$ and
$\phic$ and the sets $\A_i$ and $\C_j$ that characterize the coefficients $(a,c)$.

As already observed in \cite{CLT09}, the operator $H$ maps
$H^1(\Omega)$ into the space
\begin{equation*}
\mathcal{V}_{0,1}\ := \ \{z \in L^\infty(\Omega) \, | \, z =
\chi_{S} \, , \
                      {S} \subset \Omega  {\rm \ measurable, \ }
                      \mathcal{H}^1(\partial S) < \infty \} \,.
\end{equation*}
Therefore, the operator $P$ in \eqref{eq:def-P} maps $H^1(\Omega)
\times H^1(\Omega)$ into the admissible class $\mathcal{V}$ defined by
\begin{equation*} 
\mathcal{V}\ := \ \{(z^1,z^2) \in [L^\infty(\Omega)]^2 \, | \,
(z^1,z^2) = (a^2 + (a^1 - a^2)\,\chi_{\A_1}, c^2 + (c^1 -
c^2)\,\chi_{\C_1})   \, , \mbox{ for some } \ {\A_1, \C_1} \subset \Omega \}\,.
\end{equation*}

Within this framework, the inverse problem in \eqref{eq:system-F_m}, with
data given as in \eqref{eq:noise-data}, can be written in the form
of the operator equation
\begin{equation} \label{eq:inv-probl-fps}
F_m(P(\phia, \phic) ) \ = \ h_m^\delta \qquad m= 1,
\ldots, \ell .
\end{equation}

Let us make the following general assumption:

\bigskip

\begin{minipage}{0.95\textwidth}
\noindent {\bf (A1)} \label{ass:5} Equation \eqref{eq:system-F_m}
has a solution, i.e. there exists $ (a^*, c^*) \in
L^\infty(\Omega) \times L^\infty(\Omega)$ satisfying $F(a^*,
c^*)=h_m$, for $m=1,\ldots,\ell$. Moreover, there exists a pair of functions $(\phia^*, \phic^*) \in
[H^1(\Omega)]^2$ satisfying $P(\phia^*, \phic^*) = (a^*, c^*)$, with $|\nabla \phia^*| \neq 0$ and $|\nabla
\phic^*| \neq 0$ in a neighborhood of $\{\phia^* = 0\}$ and
$\{\phic^* = 0\}$ respectively and such that $H(\phia^*)\! = \!  z^a = \!  \chi_{\A_1} \in L^\infty(\Omega)$,
$H(\phic^*)\!=\!z_c=\! \chi_{\C_1} \in L^\infty(\Omega)$.

\end{minipage}

\bigskip

\subsection{Level set regularization} \label{sec:regularization}

Since the unknown coefficients $(a, c)$ are piecewise
constant functions, a natural alternative to obtain stable solutions
of the operator equation \eqref{eq:system-F_m} is to use a least-square
approach combined with a total variation regularization. This
corresponds to a Tikhonov-type regularization \cite{CLT09A, CLT09,
CL2010}. Within the level set framework presented above, the Tikhonov-type regularization
approach for obtaining a regularized solution to the operator
equation \eqref{eq:inv-probl-fps} is based on the minimization of
the energy functional

\begin{align} \label{eq:Tikhonov-functional}
{\cal F}_\alpha (\phia, \phic) := & \sum_{m=1}^{\ell} \| F_m
(P(\phia, \phic)) - h_m^\delta \|^2_{L^{2}(\Gamma)} + \alpha R(\phia,
\phic)  \,,
\end{align}

where
$$
R(\phia, \phic) =   \left( \beta_a |H(\phia)|_\bv + \beta_c|H(\phic)|_\bv
+ \| \phia - \phiak{0} \|^2_{H^1(\Omega)} + \| \phic - \phick{0}
\|^2_{H^1(\Omega)} \right)\,,
$$
$\alpha > 0$ is the unique regularization parameter and the constants $\beta_j$ play the role of
scaling factors. This approach is based on TV-$H^1$ penalization. 
The $H^1$--terms act as a regularization for the level set functions on the space $H^1(\Omega)$ whereas the $\bv$-seminorm terms are well known for penalizing the length of the
Hausdorff measure of the boundary of the sets $\{x \in \Omega \,:\,
\phia(x) > 0 \}$, $\{x \in \Omega \, : \, \phic(x) > 0 \}$ (see
\cite{EG92}).

In general, variational minimization techniques involve compact
embedding arguments on the set of admissible minimizers and
continuity of the operator in such set to guarantee the existence
of minimizers. The Tikhonov functional in
\eqref{eq:Tikhonov-functional} does not allow such characteristic,
since the Heaviside operator $H$ and consequently the operator $P$
 are discontinuous. Therefore, given a minimizing sequence
$(\phiak{k}, \phick{k})$ for ${\cal F}_\alpha$ we cannot
prove existence of a (weak-*) convergent subsequence. Consequently,
we cannot guarantee the existence of a minimizer in $[H^1(\Omega)]^2$. 
In other words, the graph of ${\cal F}_\alpha$ is not
closed in the desired topology.

To overcome this difficulty in \cite{FSL05, CLT09} was
introduced the concept of generalized minimizers were the graph of
${\cal F}_\alpha$ becomes closed. It allow us to
guarantee the existence of minimizers of the Tikhonov functional
\eqref{eq:Tikhonov-functional}. For sake of completeness, we present
the concept of generalized  minimizers below.



\paragraph{The concept of generalized minimizers:}
 For each $\ve > 0$, we define the smooth approximation to
$H$ given by:
$$
H_\ve(t) := \left\{
  \begin{array}{rl}
    1 + t/\ve  & \mbox{ for \ } t \in \left[-\ve,0\right] \\
    H(t)       & \mbox{ for \ } t \in \mathbb{R} \backslash \left[-\ve,0\right] \\
\end{array} \right.
$$
and the corresponding operator
\begin{equation} \label{eq:def-Pve}
P_{\ve}(\phia, \phic) \ := \ (a^1 H_\ve(\phia) + a^2 (1 -
H_\ve(\phia)),  c^1 H_\ve(\phic) + c^2 (1 - H_\ve(\phic))) \,.
\end{equation}

\begin{df} \label{def:vector}
Let the operators $H$, $P$, $H_\ve$ and $P_{\ve}$ be defined as
above.\\
\smallskip

(a) A {\bf vector} $(z^1, z^2,\phia, \phic) \in
[L^\infty(\Omega)]^2 \times [H^1(\Omega)]^2$ is
called {\bf admissible} when there exist sequences $\{ \phiak{k} \}$
and $\{ \phick{k} \}$ of $H^1(\Omega)$-functions satisfying
$$\lim\limits_{k\to\infty} \| \phiak{k} - \phia \|_{L^2(\Omega)} =
0\,,\quad \lim\limits_{k\to\infty} \| \phick{k} - \phic
\|_{L^2(\Omega)} = 0,$$ 
and there exists a sequence $\{ \ve_k \} \in \mathbb R^+$ converging to zero such that 
$$\lim\limits_{k\to\infty}
\| H_{\ve_k}(\phiak{k})-z^1 \|_{L^1(\Omega)} = 0\, \mbox{ and }
\lim\limits_{k\to\infty} \| H_{\ve_k}(\phick{k})-z^2 \|_{L^1(\Omega)}
= 0\,.$$
\smallskip

(b) A {\bf generalized minimizer} of the Tikhonov
functional $\mathcal{F}_\alpha$ in \eqref{eq:Tikhonov-functional} is
considered to be any admissible vector $(z^1, z^2,\phia,
\phic)$ minimizing
\begin{equation} \label{eq:gzphi}
{\cal{{G}}}_\alpha(z^1, z^2,\phia, \phic) := \sum_{m=1}^{\ell}
     \| F_m(Q(z^1,z^2)) - h_m^\delta \|^2_{L^{2}(\Omega)} + \alpha
     R(z^1,z^2,\phia, \phic)
\end{equation}
over the set of admissible vectors, where 
$$Q: [L^\infty(\Omega)]^2
\ni (z^1,z^2) \mapsto (a^1 z^1 +
a^2(1-z^1), c^1 z^2 + c^2 (1-z^2)) \in [L^{\infty}(\Omega)]^2\,,$$
and the functional $R$ is defined by
\begin{equation*} \label{def:R}
R(z^1, z^2,\phia, \phic) \ := \ \rho(z^1,z^2,\phia, \phic) \, ,
\end{equation*}
with
\begin{align*}
\rho(z^1,z^2,\phia, \phic)  :=  \inf  \left\{ \liminf_{k\to\infty}
\left(\beta_a |H_{\ve_k} (\phiak{k})|_\bv + \beta_c|H_{\ve_k} (\phick{k})|_\bv +  \|
\phiak{k} - \phiak{0}\|^2_{H^1(\Omega)} + \| \phick{k} - \phick{0}
\|^2_{H^1(\Omega)}\right) \right\}\,.
\end{align*}
Here the infimum is taken over all sequences $\{\ve_k\}$ and
$\{\phiak{k}, \phick{k}\}$ characterizing $(z^1,z^2,\phia,
\phic)$ as an admissible vector.
\end{df}

\subsection{Convergence analysis of the level set approach} \label{sec:conv-an}
In this subsection we present the regularization properties of the
proposed level set approach to the inverse problem of identifying
$(a, c)$ in the diffuse optical tomography 
model~\eqref{eq:1}--\eqref{eq:2}.
Since the results follow straightforward arguments presented in
\cite{CLT09A, CLT09, CL2010} we do not present their proofs here.

\begin{theo} The following assertions hold true. \label{th:converg}
\mbox{} \\

i) The functional $\Ga$ in \eqref{eq:Tikhonov-functional} attains
minimizers on the set of admissible vectors.\\

ii) {\bf [Convergence for exact data]} Assume that we have exact
data, i.e. $h^\delta=h$. For every $\alpha > 0$ denote by
$(z^1_\alpha, z^2_\alpha, \phiak{\alpha},
\phick{\alpha} )$ a minimizer of $\Ga$ on the set of
admissible vectors. Then, for every sequence of positive numbers
$\{\alpha_k\}$ converging to zero there exists a subsequence,
denoted again by $\{\alpha_k\}$, such that $(z^1_{\alpha_k},
z^2_{\alpha_k}, \phiak{\alpha_k}, \phick{\alpha_k} )$ is strongly convergent in
$[L^1(\Omega)]^2 \times [L^2(\Omega)]^2$. Moreover,
the limit is a solution of \eqref{eq:system-F_m}. \\

iii) {\bf [Convergence for noisy data]} Let $\alpha = \alpha(\delta)$
be a function satisfying $\lim_{\delta \to 0}$ $\alpha(\delta) = 0$
and $\lim_{\delta \to 0} \delta^2 \alpha(\delta)^{-1} = 0$.
Moreover, let $\{ \delta_k \}$ be a sequence of positive numbers
converging to zero and $\{ h^{\delta_k}\} \in L^{2}(\Gamma)$
be corresponding noisy data satisfying
\eqref{eq:noise-data}. Then, there exists a subsequence, denoted
again by $\{ \delta_k \}$, and a sequence $\{ \alpha_k :=
\alpha(\delta_k) \}$ such that $(z^1_{\alpha_k},
z^2_{\alpha_k},\phiak{\alpha_k}, \phick{\alpha_k})$ converges in
$[L^1(\Omega)]^2 \times [L^2(\Omega)]^2$ to a solution of \eqref{eq:inv-probl-fps}.
\end{theo}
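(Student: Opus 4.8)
\textbf{Proof proposal for Theorem~\ref{th:converg}.}

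The plan is to follow the now-standard template for level set regularization developed in \cite{CLT09A, CLT09, CL2010}, with the key analytic input being the $[L^1(\Omega)]^2$-continuity of each $F_m$ established in Theorem~\ref{th:continuity1}. I will treat the three assertions in order, since (ii) and (iii) both rely on the existence statement (i) and on the same compactness mechanism.

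For (i), I would take a minimizing sequence of admissible vectors $(z^1_n, z^2_n, \phiak{n}, \phick{n})$ for $\Ga$. Since $\Ga$ controls the term $\|\phiak{n} - \phiak{0}\|^2_{H^1(\Omega)} + \|\phick{n} - \phick{0}\|^2_{H^1(\Omega)}$ through $R$, the sequences $\{\phiak{n}\}$ and $\{\phick{n}\}$ are bounded in $H^1(\Omega)$, hence (up to a subsequence) weakly convergent in $H^1(\Omega)$ and, by Rellich, strongly in $L^2(\Omega)$; call the limits $\phia, \phic$. Likewise the $\bv$-seminorm terms $|H_{\ve_k}(\phiak{k})|_\bv$ are bounded along the defining sequences, so by the compact embedding $\bv(\Omega) \hookrightarrow L^1(\Omega)$ one extracts limits $z^1, z^2 \in \bv(\Omega)$ with $H_{\ve_{k}}(\cdot)$ converging in $L^1$; a diagonal argument over the defining sequences of each $(z^1_n, z^2_n, \phiak{n}, \phick{n})$ then produces a single admissible vector $(z^1, z^2, \phia, \phic)$. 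The data-fidelity term passes to the limit because $Q$ is affine and continuous on $L^\infty$ (in fact on $L^1$ on the bounded admissible set, where $z^i \in [0,1]$), so $Q(z^1_n, z^2_n) \to Q(z^1, z^2)$ in $[L^1(\Omega)]^2$, and then $F_m(Q(z^1_n,z^2_n)) \to F_m(Q(z^1,z^2))$ in $L^2(\Gamma)$ by Theorem~\ref{th:continuity1}; the regularization term $R = \rho$ is lower semicontinuous by its very definition as an infimum of $\liminf$'s over defining sequences. Combining weak lower semicontinuity of the fidelity term with that of $R$ gives $\Ga(z^1,z^2,\phia,\phic) \le \liminf \Ga(z^1_n,z^2_n,\phiak{n},\phick{n})$, so the limit is a minimizer.

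For (ii), with $h^\delta = h$ and a sequence $\alpha_k \to 0$, I would compare the minimizer $(z^1_{\alpha_k}, z^2_{\alpha_k}, \phiak{\alpha_k}, \phick{\alpha_k})$ against the reference admissible vector built from $(\phia^*, \phic^*)$ in Assumption (A1), which by (A1) is an exact solution with finite $R$. Minimality gives
\begin{align*}
\sum_{m=1}^{\ell} \| F_m(Q(z^1_{\alpha_k}, z^2_{\alpha_k})) - h_m \|^2_{L^2(\Gamma)} + \alpha_k R(z^1_{\alpha_k}, z^2_{\alpha_k}, \phiak{\alpha_k}, \phick{\alpha_k}) \le \alpha_k R^* ,
\end{align*}
where $R^*$ is the (finite) value of $R$ at the reference vector. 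Dividing by $\alpha_k$ shows $R$ stays bounded along the minimizers, which as in part (i) yields $H^1$-boundedness of $\{\phiak{\alpha_k}\}, \{\phick{\alpha_k}\}$ and $\bv$-boundedness of $\{z^1_{\alpha_k}\}, \{z^2_{\alpha_k}\}$; extracting a subsequence convergent in $[L^1(\Omega)]^2 \times [L^2(\Omega)]^2$ to some admissible $(z^1,z^2,\phia,\phic)$ as before. Meanwhile the displayed inequality also forces $\| F_m(Q(z^1_{\alpha_k}, z^2_{\alpha_k})) - h_m \|_{L^2(\Gamma)} \to 0$, and continuity of $F_m$ (Theorem~\ref{th:continuity1}) combined with continuity of $Q$ gives $F_m(Q(z^1,z^2)) = h_m$ for each $m$, i.e.\ the limit solves \eqref{eq:system-F_m}.

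For (iii), the argument is the same but one compares against the exact solution using the noise bound \eqref{eq:noise-data}: minimality against the reference vector yields
\begin{align*}
\sum_{m=1}^{\ell} \| F_m(Q(z^1_{\alpha_k}, z^2_{\alpha_k})) - h_m^{\delta_k} \|^2_{L^2(\Gamma)} + \alpha_k R(\cdots) \le \ell\, \delta_k^2 + \alpha_k R^* ,
\end{align*}
and the parameter choice $\alpha_k = \alpha(\delta_k)$ with $\delta_k^2/\alpha_k \to 0$ makes the right-hand side, after dividing by $\alpha_k$, bounded; hence $R$ stays bounded along the minimizers and the same compactness extraction applies. The residual satisfies $\| F_m(Q(z^1_{\alpha_k},z^2_{\alpha_k})) - h_m^{\delta_k}\|^2_{L^2(\Gamma)} \le \ell\,\delta_k^2 + \alpha_k R^* \to 0$, and since $\|h_m^{\delta_k} - h_m\|_{L^2(\Gamma)} \le \delta_k \to 0$ we get $F_m(Q(z^1,z^2)) = h_m$ in the limit by continuity; as the $h_m$ are precisely the exact data generating \eqref{eq:inv-probl-fps}, the limit is the desired solution. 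The main obstacle in all three parts is the handling of the discontinuous operator $H$: the weak $H^1$-limit of the level set functions need not satisfy $H_{\ve_k}(\phiak{k}) \to H(\phia)$, which is exactly why one works with the relaxed notion of admissible vector and the relaxed functional $\Ga$ — so the real content is checking that the class of admissible vectors is closed under the extraction above and that $R = \rho$ is sequentially lower semicontinuous on it, both of which are inherited verbatim from \cite{CLT09}.
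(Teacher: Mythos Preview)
Your proposal is correct and follows exactly the approach the paper takes: the paper's own proof simply states that the three assertions follow from Theorems~6, 8 and 9 in \cite{CLT09} and omits all details, so your sketch is in fact a faithful expansion of what that reference contains, with the $[L^1(\Omega)]^2$-continuity of $F_m$ from Theorem~\ref{th:continuity1} supplying the one model-specific ingredient.
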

\begin{proof}
The proof follows the arguments presented in \cite{CLT09},
Theorem~6, Theorem~8 and Theorem~9 respectively and therefore is omitted.
\end{proof}

\section{Numerical realization} \label{sec:tow-num-sol}

In this section we introduce the functional $\Ger$, which can be
used for the purpose of numerical implementations. This functional
is defined in such a way that it's minimizers are ``close'' to the
generalized minimizers of $\mathcal{F}_\alpha$ in a sense that will
be clear later (see Proposition~\ref{th:just}). For each $\eps
> 0$ we define the functional
\begin{align} \label{eq:m-reg}
\Ger(\phia, \phic) \ := \sum_{m=1}^{\ell} \|
F_m(P_\eps(\phia,\phic)) - h_m^\delta \|^2_{L^{2}(\Gamma)} +
    \alpha R_\ve(\phia, \phic)\; ,
\end{align}
where
\begin{align}\label{eq:Rve}
R_\ve(\phia, \phic) : = \left( \beta_a |H_\ve(\phia)|_{\bv} + \beta_c
|H_{\ve} (\phic)|_{\bv} + \| \phia - \phiak{0} \|_{H^1(\Omega)}^2 + \| \phic - \phick{0} \|_{H^1(\Omega)}^2 \right)\,.
\end{align}

The next result guarantees that for $\eps \to 0$ 
the functional $\Ger$ attains a
minimizer. Moreover,  the minimizers of $\Ger$ approximate a generalized minimizer
 of $\mathcal{F}_\alpha$.

\begin{pr} \label{th:just}
\mbox{}\\
i) Given $\alpha$, $\beta_j$, $\eps > 0$ and $\phiak{0}$, $\phick{0}$ in
$H^1(\Omega)$, then the functional $\Ger$ in \eqref{eq:m-reg}
attains a minimizer on $[H^1(\Omega)]^2$.\\
ii) Let $\alpha$, $\beta_j$ be given. For each $\eps > 0$ denote by
$(\phiak{\eps,\alpha},\phick{\eps,\alpha})$ a minimizer of $\Ger$. There exists a sequence of positive
numbers $\{ \eps_k \}$ converging to zero such that $(
H_{\eps_k}(\phiak{{\eps_k},\alpha}),
H_{\eps_k}(\phick{{\eps_k},\alpha}), \phiak{{\eps_k},\alpha},
\phick{{\eps_k},\alpha})$ converges
strongly in $[L^1(\Omega)]^2 \times [L^2(\Omega)]^2 $ and the
limit is a generalized minimizer of $\mathcal{F}_\alpha$ in the set
of admissible vectors.
\end{pr}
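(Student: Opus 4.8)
\textbf{Proof plan for Proposition~\ref{th:just}.}

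The plan is to prove (i) by a standard direct method in the calculus of variations, and (ii) by a diagonal/compactness argument combined with the characterization of admissible vectors in Definition~\ref{def:vector}.

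For (i), fix $\eps>0$ and take a minimizing sequence $(\phiak{k},\phick{k})$ for $\Ger$. Since $\Ger(\phiak{k},\phick{k})$ is bounded and the penalty $R_\ve$ contains the terms $\|\phiak{k}-\phiak{0}\|_{H^1}^2$ and $\|\phick{k}-\phick{0}\|_{H^1}^2$, the sequences $\{\phiak{k}\}$ and $\{\phick{k}\}$ are bounded in $H^1(\Omega)$; hence, up to a subsequence, they converge weakly in $H^1(\Omega)$ and strongly in $L^2(\Omega)$ to limits $\phia,\phic$. The $H^1$-seminorm terms are weakly lower semicontinuous. For the data-misfit term, I would use that $H_\ve$ is (Lipschitz) continuous as a map on $\R$ and that $\phiak{k}\to\phia$ in $L^2(\Omega)$ with the pointwise bounds available, so that $H_\ve(\phiak{k})\to H_\ve(\phia)$ in $L^1(\Omega)$ (Lebesgue dominated convergence after passing to an a.e.-convergent subsequence); therefore $P_\ve(\phiak{k},\phick{k})\to P_\ve(\phia,\phic)$ in $[L^1(\Omega)]^2$, and Theorem~\ref{th:continuity1} gives convergence of $F_m(P_\ve(\phiak{k},\phick{k}))$ in $H^{1/2}(\Gamma)$, hence in $L^2(\Gamma)$, so the misfit term passes to the limit. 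For the $\bv$-seminorm terms $|H_\ve(\phiak{k})|_\bv$, I would invoke lower semicontinuity of total variation under $L^1$-convergence. Putting these together, $\Ger(\phia,\phic)\le\liminf_k\Ger(\phiak{k},\phick{k})$, so $(\phia,\phic)$ is a minimizer.

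For (ii), let $(\phiak{\eps,\alpha},\phick{\eps,\alpha})$ be a minimizer of $\Ger$ for each $\eps>0$. Comparing with a fixed competitor built from the pair $(\phia^*,\phic^*)$ of assumption (A1) (for which $H_{\eps}(\phia^*)\to H(\phia^*)=z^a$ and similarly for $\phic^*$, by the $|\nabla\phia^*|\neq0$ condition near the zero level set, so the $\bv$-seminorms stay bounded as $\eps\to0$), one obtains a uniform bound on $\Ger(\phiak{\eps,\alpha},\phick{\eps,\alpha})$ as $\eps\to0$. This yields: the $H^1$-norms of $\phiak{\eps,\alpha}-\phiak{0}$ and $\phick{\eps,\alpha}-\phick{0}$ are uniformly bounded, and the total variations $|H_\eps(\phiak{\eps,\alpha})|_\bv$, $|H_\eps(\phick{\eps,\alpha})|_\bv$ are uniformly bounded. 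Picking $\eps_k\to0$, extract a subsequence so that $\phiak{{\eps_k},\alpha}\rightharpoonup\phia$ and $\phick{{\eps_k},\alpha}\rightharpoonup\phic$ weakly in $H^1(\Omega)$, hence strongly in $L^2(\Omega)$; and, using the uniform $\bv$ bound together with the uniform $L^\infty$ bound on $H_{\eps_k}(\cdot)\in[0,1]$, extract a further subsequence so that $H_{\eps_k}(\phiak{{\eps_k},\alpha})\to z^1$ and $H_{\eps_k}(\phick{{\eps_k},\alpha})\to z^2$ strongly in $L^1(\Omega)$ (compact embedding $\bv(\Omega)\hookrightarrow L^1(\Omega)$). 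By Definition~\ref{def:vector}(a), $(z^1,z^2,\phia,\phic)$ is then an admissible vector, and the claimed strong convergence in $[L^1(\Omega)]^2\times[L^2(\Omega)]^2$ holds.

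It remains to show the limit $(z^1,z^2,\phia,\phic)$ actually \emph{minimizes} $\Ga$ over admissible vectors. For the lower bound, lower semicontinuity of total variation and of the $H^1$-norms under the convergences above, together with the definition of $\rho$ as an infimum over admissible sequences (so $\rho(z^1,z^2,\phia,\phic)\le\liminf_k R_{\eps_k}(\phiak{{\eps_k},\alpha},\phick{{\eps_k},\alpha})$), and continuity of each $F_m$ in the $[L^1(\Omega)]^2$ topology applied to $Q(z^1,z^2)=\lim_k P_{\eps_k}(\phiak{{\eps_k},\alpha},\phick{{\eps_k},\alpha})$, give $\Ga(z^1,z^2,\phia,\phic)\le\liminf_k\Ger[k](\phiak{{\eps_k},\alpha},\phick{{\eps_k},\alpha})$. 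For the upper bound, take any admissible competitor $(\tilde z^1,\tilde z^2,\tilde\phi^a,\tilde\phi^c)$; by admissibility there are sequences $\{\tilde\phi^a_j\},\{\tilde\phi^c_j\}$ and $\tilde\ve_j\to0$ realizing it, and using minimality of $(\phiak{\tilde\ve_j,\alpha},\phick{\tilde\ve_j,\alpha})$ for $\mathcal{G}_{\tilde\ve_j,\alpha}$ against the competitor $(\tilde\phi^a_j,\tilde\phi^c_j)$, plus the continuity/convergence facts just listed, one gets $\liminf_k\Ger[k](\phiak{{\eps_k},\alpha},\phick{{\eps_k},\alpha})\le\Ga(\tilde z^1,\tilde z^2,\tilde\phi^a,\tilde\phi^c)$; since the competitor is arbitrary, $(z^1,z^2,\phia,\phic)$ is a generalized minimizer. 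The main obstacle I expect is the bookkeeping in this last step: carefully matching the mollification parameter $\tilde\ve_j$ used to test optimality of $\mathcal{G}_{\tilde\ve_j,\alpha}$ with the diagonal subsequence $\eps_k$, and ensuring the $\bv$-seminorm terms behave well under the double limit — this is exactly where the argument of \cite{CLT09} (Theorem~5 and the proof of their analogous proposition) has to be adapted, and I would follow that template closely, also using Lemma~\ref{lemma:0} where $L^s$-bounds are needed to push the misfit terms through.
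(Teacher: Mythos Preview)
Your proposal is essentially correct and is in fact considerably more detailed than the paper's own proof, which consists solely of the sentence ``The proof follows from Lemma~10 and Theorem~11 presented in \cite{CLT09}'' with no further argument. Your reconstruction --- direct method for (i), then uniform bounds, $H^1$/$\bv$ compactness, admissibility via Definition~\ref{def:vector}(a), and a lower/upper bound comparison for (ii) --- is exactly the template of the cited results in \cite{CLT09}, so the approaches coincide.

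Two minor remarks. First, the paper cites Lemma~10 and Theorem~11 of \cite{CLT09}, not Theorem~5 as you wrote; you will want to align the numbering when finalizing. Second, the appeal to Lemma~\ref{lemma:0} at the very end is unnecessary here: once you have $P_{\eps_k}(\phiak{{\eps_k},\alpha},\phick{{\eps_k},\alpha})\to Q(z^1,z^2)$ in $[L^1(\Omega)]^2$, Theorem~\ref{th:continuity1} already gives convergence of the misfit terms, and no additional $L^s$-interpolation is required. The genuinely delicate point --- which you correctly flag --- is the matching of the competitor's approximating sequence $\{\tilde\ve_j\}$ with the extracted subsequence $\{\eps_k\}$ in the upper-bound step; this is handled in \cite{CLT09} by the characterization of $\rho$ as an infimum over \emph{all} admissible approximating sequences, which lets one test $\mathcal{G}_{\tilde\ve_j,\alpha}$-minimality along the competitor's own sequence and then pass to the limit, rather than forcing both sequences to coincide.
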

\begin{proof}
The proof follows from Lemma~10 and Theorem~11 presented in \cite{CLT09}.
Therefore, we do not present the details it in this paper.
\end{proof}

Proposition~\ref{th:just} justifies the use of functional $\Ger$ in
order to obtain numerical approximations to the generalized
minimizers of $\mathcal{F}_\alpha$. It is worth noticing that,
differently from $\mathcal{F}_\alpha$, the minimizers of $\Ger$ can
be actually computed. In the next subsection we derive the first
order optimality conditions for the functional $\Ger$, which will
allow us to compute the desired minimizers.

\subsection{Optimality conditions for the Tikhonov functional $\Ger$}
For the numerical purposes we have in mind, it is necessary to
derive the first order optimality conditions for a minimizer of the
functional $\Ger$. To this end, we consider $\Ger$ in
\eqref{eq:m-reg} and we look for the G\^ateaux directional
derivatives with respect to $\phia$, $\phic$. In order to simplify 
the presentation, we will assume that the values $a^1, a^2, c^1, c^2$ are known.
Since $H'_\eps(\varphi)$ is self-adjoint,%
\footnote{Notice that $H'_\eps(t) = \begin{cases} 1/\eps & t \in (-\eps, 0) \\
0 & else \end{cases}$.} the optimality conditions for a minimizer of
the functional $\Ger$ can be written in the form of the system of
equations
\begin{subequations} \label{eq:formal0} \begin{eqnarray}
\alpha (\Delta-I)(\phia - \phiak{0}) = L_{\eps,\alpha}^a(\phia,\phic)
\, , \ \ \ \alpha (\Delta-I)(\phic - \phick{0}) =
L_{\eps,\alpha}^c(\phia,\phic) \, , \,
{\rm in}\ \Omega \label{eq:formal01} \\
\frac{ \partial}{\partial \nu}(\phia - \phiak{0}) = 0 \, , \ \ \ \ \ \ \ \ \ \ \ \ \ \ \ \ \ \ \ \
 \ \ \, \frac{ \partial}{\partial \nu}(\phic - \phick{0}) = 0 \, , \quad  \quad   {\rm on} \ \Gamma
\ \ \ \ \ \ \ \ \ \, \label{eq:formal02}
\end{eqnarray} \end{subequations}
where $\nu(x)$ is the external unit normal vector at $x \in \Gamma$ and
\begin{subequations} \label{eq:formal1}
\begin{eqnarray}
L_{\eps,\alpha}^a(\phia,\phic) & = &  (a^1 - a^2) \,
H'_\eps(\phia)\,
      \left[\sum_{m=1}^l \left(\frac{ \partial F_m( P_\eps(\phia,\phic) )}{\partial \phia }\right)^* ( F_m(P_\eps(\phia,\phic)) - h_m^\delta ) \right] \nonumber \\
&   & - \alpha \beta_a \, \left[ H'_\eps(\phia) \, \nabla \!\cdot\!
      \left( \frac{\nabla H_\eps(\phia)}{|\nabla H_\eps (\phia)|} \right) \right]\label{eq:formal10} \\
%
L_{\eps,\alpha}^c(\phia,\phic) & = &  (c^1 - c^2) \,
H'_\eps(\phic)\, \left[ \sum_{m=1}^l \left(\frac{ \partial F_m( P_\eps(\phia,\phic) )}{\partial \phic }\right)^* ( F_m(P_\eps(\phia,\phic)) - h_m^\delta ) \right] \nonumber \\
&   & - \alpha \beta_c \, \left[ H'_\eps(\phic) \, \nabla \!\cdot\!
      \left( \frac{\nabla H_\eps(\phic)}{|\nabla H_\eps (\phic)|} \right) \right]. \label{eq:formal11}
\end{eqnarray}
\end{subequations}

Note that, in order to implement the numerical algorithm for solving
the optimality conditions, we need to calculate the
adjoint of the derivatives $\frac{\partial F_m}{\partial \phia}$ 
and $\frac{\partial F_m}{\partial \phic }$.

\begin{remark}\label{remark:F-adjoint-derivative}
Given the level set functions $\phia, \phic \in H^1(\Omega)$ and inputs $g_m \in H^{1/2}(\Gamma)$ for $m=1,\ldots, \ell$,  denote the residual $r_m:=  F_m(P_\eps(\phia,\phic)) -
h_m^\delta \in L^2(\Gamma)$. Then, 
\begin{align}\label{eq:derifative-F-a}
\left(\frac{ \partial F_m( P_\eps(\phia,\phic) )}{\partial \phia }\right)^* r_{m} =
\nabla u_{m} \cdot \nabla w_{m}\,
\end{align}
and
\begin{align}\label{eq:derifative-F-c}
\left(\frac{ \partial F_m( P_\eps(\phia,\phic) )}{\partial \phic }\right)^* r_{m}
= - u_{m} \, w_{m}\,
\end{align} 
where $u_{m}$ and $w_{m}$ are the unique solutions of the following elliptic boundary problems
\begin{align}\label{eq:pde-u}
- \nabla (a \nabla u_{m}) + c\, u_{m} & = 0 \,,\quad \,\,\,\, \mbox{in } \quad \Omega\\
 a \frac{\partial u_{m}}{\partial \nu} & = g_m \,,\quad \mbox{on } \quad \Gamma\,, \nonumber
\end{align}
\begin{align}\label{eq:pde-w}
- \nabla (a \nabla w_{m}) + c\, w_{m} & = 0 \,,\quad \,\,\,\, \mbox{in } \quad \Omega\\
  a \frac{\partial w_{m}}{\partial \nu} & = r_{m} \,,\quad \mbox{on } \quad \Gamma\,, \nonumber
\end{align}
for $m=1,\ldots, \ell$ respectively.
\end{remark}

We have already introduced all the ingredients necessary to implement an algorithm based on the 
level set regularization approach to solve the identification problem in
diffuse optical tomography (see Table~\ref{tab:algor-old}). The iterative
algorithm consists in minimizing, for $k \geq 1$, the functional
\begin{align} 
\Ger^{(k)}(\phia, \phic) \ := \sum_{m=1}^{\ell} \|
F_m(P_\eps(\phia,\phic)) - h_m^\delta \|^2_{L^{2}(\Gamma)} +
    \alpha R^{(k)}_\ve(\phia, \phic)\; ,
\end{align}
where $R^{(k)}_\ve$ is the functional $R_\ve$ defined in (\ref{eq:Rve}) with $\phi^j_0$ replaced by $\phi^j_{k-1}$.
The minimizer of each functional can be computed solving the formal optimality conditions (\ref{eq:formal0}) with $\phi^j_0$ replaced by $\phi^j_{k-1}$.

Each iteration of the proposed algorithm consists in the next five steps:

\begin{minipage}{17.0cm}
\noindent $\bullet$ In the first step the residual vector $[r_{k,m}
]_{m=1}^\ell \in [L^2(\Gamma)]^\ell$, corresponding to the $k$th iteration
$(\phiak{k},\phick{k})$, is evaluated. This requires the solution of
$\ell$ elliptic BVP's given by \eqref{eq:pde-u}.
\smallskip

\noindent $\bullet$ The second step consists in computing the adjoint of the
partial derivatives of $F_m$ applied to the residuals. This is done by solving $\ell$ elliptic BVP given by \eqref{eq:pde-w} to get 
the solutions $[w_{k,m}]_{m=1}^\ell \in [H^1(\Omega)]^{\ell}$ and then computing the products given by Remark~\ref{remark:F-adjoint-derivative}.
\smallskip

\noindent $\bullet$ In the third step, the terms 
$L_{\eps,\alpha}^a(\phiak{k},\phick{k})$ and
$L_{\eps,\alpha}^c(\phiak{k},\phick{k})$ given by equations~\eqref{eq:formal10} and ~\eqref{eq:formal11} are calculated. 
\smallskip

\noindent $\bullet$ The fourth step consists in computing the updates $\delta\phiak{k}$,
$\delta\phick{k} \in H^1(\Omega)$ for the level-set functions
$\phi^a$ and $\phi^c$. This corresponds to solving
two non-coupled elliptic BVP's, namely \eqref{eq:formal01} and \eqref{eq:formal02}.

\noindent $\bullet$ Finally, update the level set functions and go to step 1 until a stopping criteria is reached.

\end{minipage}

\begin{table}[t!]
\fbox{ \begin{minipage}{17.0cm}
\begin{tt}\begin{itemize}
\item[{\bf 1.}]  Evaluate the residual
$[r_{k,m}]_{m=1}^\ell := [ F_m( P_\eps(\phiak{k},\phick{k}) ) - h_m^\delta
]_{m=1}^\ell = [ u_{k,m} |_{\Gamma} - h_m^\delta ]_{m=1}^\ell$, where $[
u_{k,m} ]_{m=1}^\ell \in [ H^1(\Omega) ]^\ell$ and each function solves \eqref{eq:pde-u}.
\medskip

\item[{\bf 2.}]  Evaluate $\left[ \left(\frac{ \partial F_m( P_\eps(\phiak{k},\phick{k}) )}{\partial \phiak{k}}\right)^* \, r_{k,m} \right]_{m=1}^\ell =
[ \nabla w_{k,m} \cdot \! \nabla u_{k,m} ]_{m=1}^\ell \in [ L^2(\Omega) ]^\ell$, and
$\left[ \left(\frac{ \partial F_m( P_\eps(\phiak{k},\phick{k}) )}{\partial
\phick{k}}\right)^* \, r_{k,m} \right]_{m=1}^\ell = -[  w_{k,m} \, u_{k,m} ]_{m=1}^\ell \in [
L^2(\Omega) ]^\ell$, where $[u_{k,m}]_{m=1}^\ell$ are the functions computed in
Step {\bf 1} and $[w_{k,m}]_{m=1}^\ell \in [ H^1(\Omega) ]^\ell$ solve
\eqref{eq:pde-w}.
\medskip

\item[{\bf 3.}] Calculate $L^a_{\eps,\alpha}(\phiak{k},\phick{k})$ and
$L^c_{\eps,\alpha}(\phiak{k},\phick{k})$ given by equations \eqref{eq:formal10} and \eqref{eq:formal11}.

\item[{\bf 4.}] Evaluate the updates $\delta\phiak{k}$, $\delta\phick{k} \in H^1(\Omega)$ by
solving 
\medskip

\centerline{
 \hfil $(\Delta - I) \delta\phi^j_k = L^j_{\eps,\alpha}(\phiak{k},\phick{k})
           \, , \, {\tt in}\ \Omega \, ; \quad
           \frac{\partial \delta\phi^j_k}{\partial \nu} = 0 \, , \, {\tt on}\ \Gamma \; .$ 
 \hfil} \smallskip

\item[{\bf 5.}] Update the level set functions\,
$\phiak{k+1} = \phiak{k} + \frac{1}{\alpha} \; \delta\phiak{k},$ \,
$\phick{k+1} = \phick{k} + \frac{1}{\alpha} \; \delta\phick{k}$.
\end{itemize} \end{tt}
\end{minipage}
} \caption{An explicit algorithm based on the iterative
regularization method for solving the identification problem in
diffuse optical tomography. \label{tab:algor-old} }
\end{table}
\medskip

A similar algorithm was successfully implemented in \cite{FSL05,CLT09} to
solve the inverse potential problem under the framework of
level sets and multiple level sets respectively.
Regarding our coefficient identification problem in diffuse optical tomography
the algorithm outlined above also seems to be effective (see next Section), but in this case, it has the disadvantage
that in each iteration step one has to solve $2\ell+2$ elliptic BVP's. Then, if the number $\ell$ of experiments is large the computational cost will be
high.

\newpage
\section{Numerical Experiments} \label{sec:numerical}

In this section we implement a 
numerical algorithm based on the level set approach
derived in the previous sections for identifying the coefficient pair $(a,c)$ in
\eqref{eq:1}--\eqref{eq:2}. First, the identification of the absorption coefficient
$c$, based on either total or partial knowledge of $a$, is considered in Section~%
\ref{ssec:numer-c}. Then, the separate identification of the diffusion coefficient $a$,
based on either total or partial knowledge of $c$, is considered in Section~%
\ref{ssec:numer-a}. Finally, the simultaneous identification of the pair $(a,c)$ is investigated
in Section~\ref{ssec:numer-ac}.

In all the numerical experiments of this Section we considered $\Omega = (0,1)\times(0,1)$ and four ($\ell=4$) different inputs $g_m \in L^2{(\Gamma})$ were applied
as Neumann boundary conditions in order to compute the corresponding Dirichlet data $h_m$. Each of
these functions is supported at one of the four sides of $\Gamma$, for instance 
$$
g_1(x) = \left\{ \begin{array}{rl} 1 \, , & {if} \, x \,  \in \,  (\frac14, \frac34)\times \{0\} \\
                                    0 \, , & {\rm else} \end{array} \right. , 
$$
and $g_2, g_3$ and $g_4$ are defined in a similar way. 
All boundary value problems were solved using a Galerkin Finite Element method in an uniform grid with 50 nodes at
each boundary side. We used a custom implementation using MATLAB.

\begin{figure}[ht!]
\centerline{
\includegraphics[width=4.4cm]{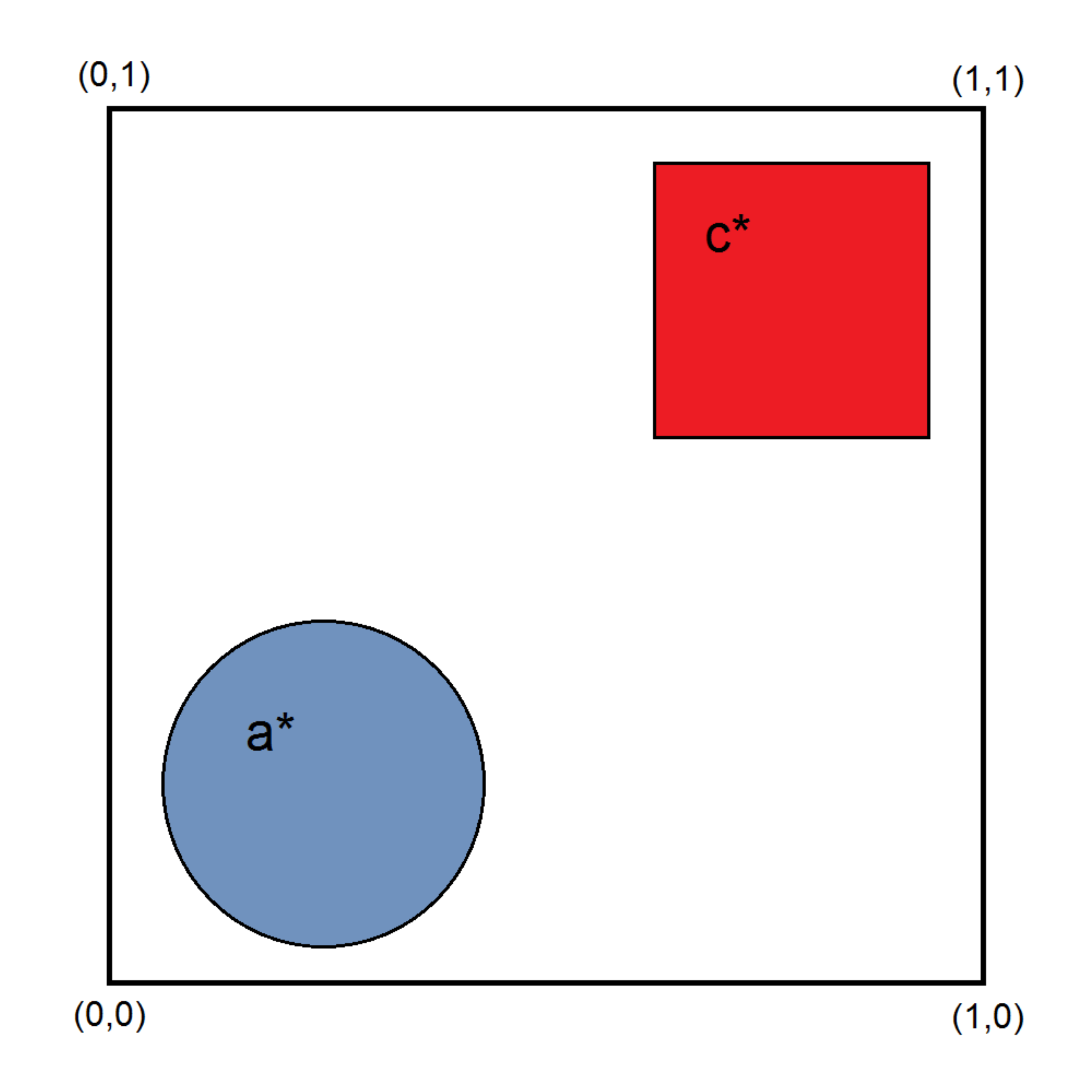}
\includegraphics[width=4.4cm]{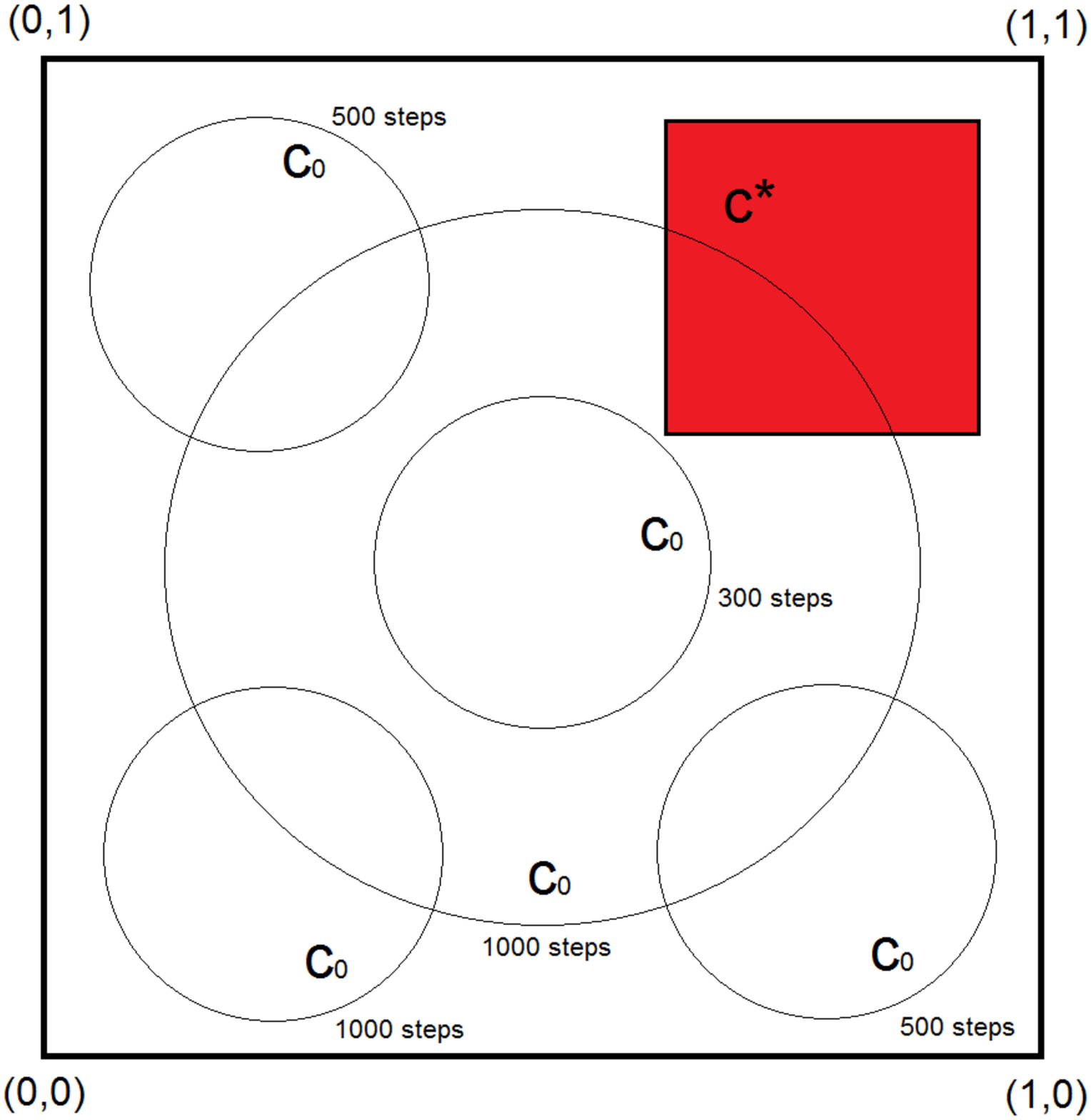}
\includegraphics[width=4.4cm]{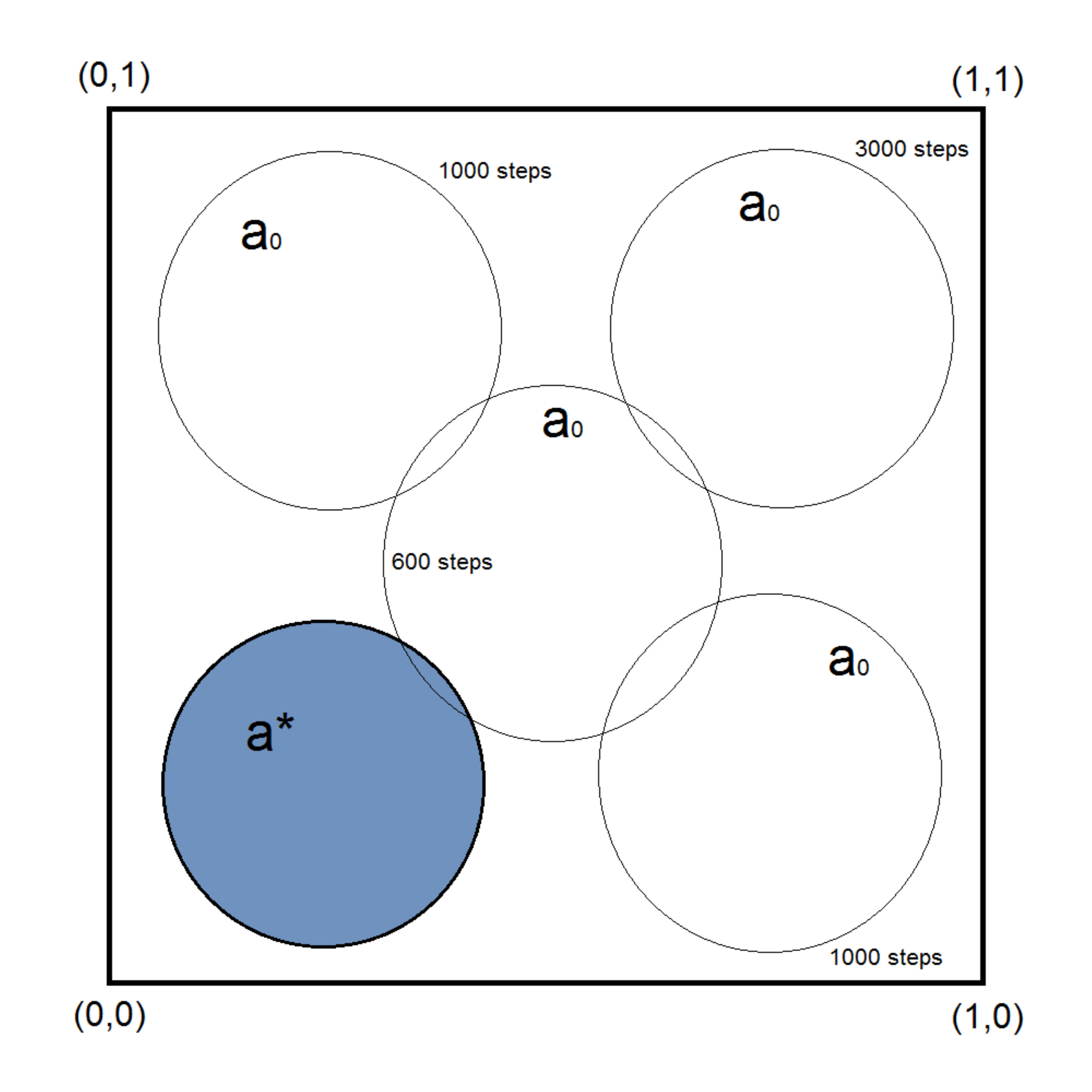}
\includegraphics[width=4.4cm]{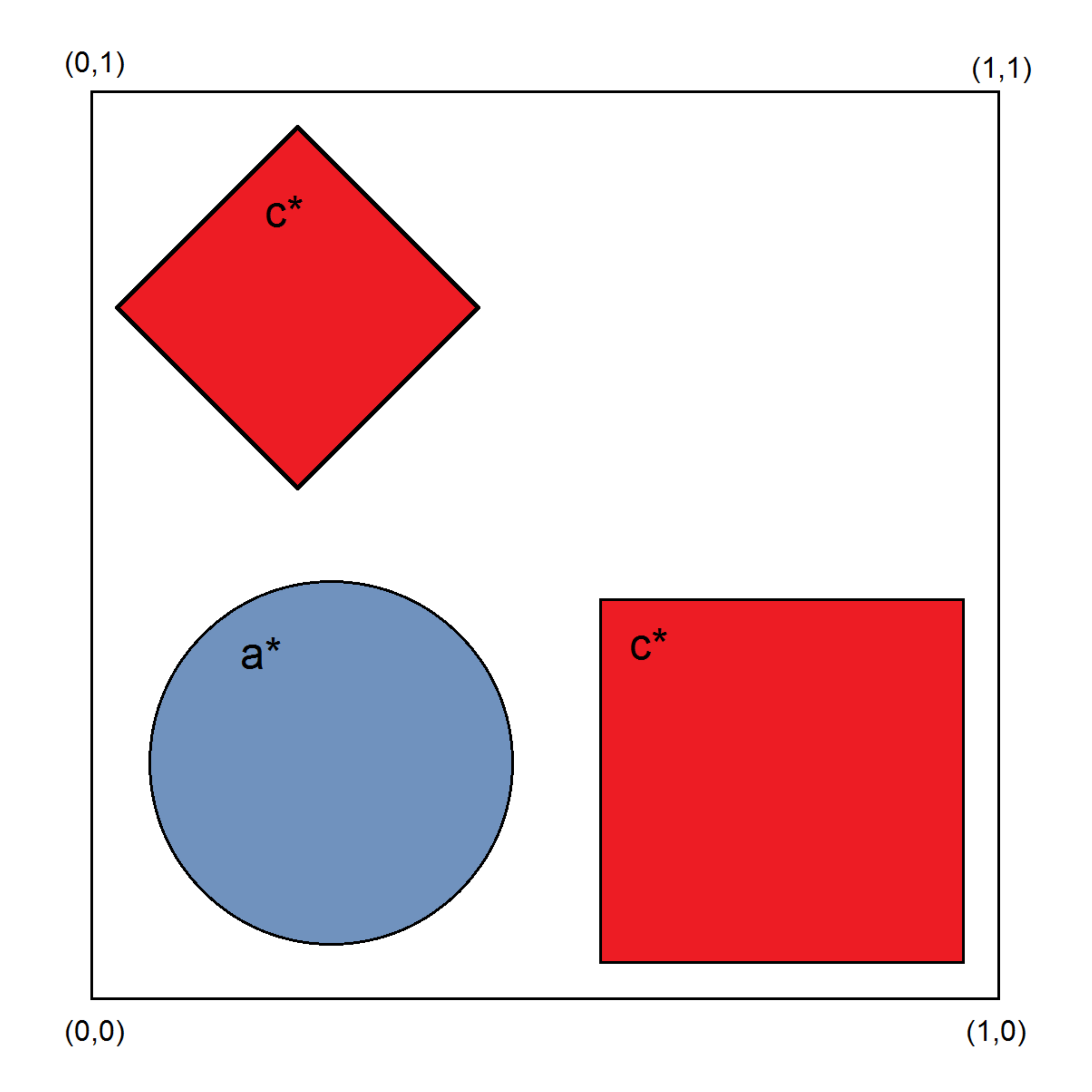} 
}
\centerline{\hfil (a) \hskip4cm (b) \hskip4cm (c) \hskip4cm (d) \hfil}
\caption{\small
{\bf (a)} Exact coefficients for the first experiments in Sections~\ref{ssec:numer-c}
and~\ref{ssec:numer-a}.
{\bf (b)} Number of iterations needed for the identification of the absorption
coefficient $c^*$, starting from distinct initial guesses $c_0$ ($a^*$ is given;
see Section~\ref{ssec:numer-c}).
{\bf (c)}  Number of iterations needed for the identification of the diffusion
coefficient $a^*$, starting from distinct initial guesses $a_0$ ($c^*$ is given;
see Section~\ref{ssec:numer-a}).
{\bf (d)} Exact coefficients for the second experiments in Sections~\ref{ssec:numer-c}
and~\ref{ssec:numer-a}. }
\label{fig:separate-ident}
\end{figure}

\subsection{Identification of the absorption coefficient} \label{ssec:numer-c}

In what follows we consider the identification of the absorption coefficient $c$, based on
either total or partial knowledge of $a$. The values assumed for these coefficients were (see Figure~\ref{fig:separate-ident}):

$$
a^*(x) = \left\{ \begin{array}{rl} 10 \, , & {\rm inside \ blue \ inclusion} \\
                                    1 \, , & {\rm elsewhere} \end{array} \right. , \quad
c^*(x) = \left\{ \begin{array}{rl} 10 \, , & {\rm inside \ red \ inclusion} \\
                                    1 \, , & {\rm elsewhere.} \end{array} \right.
$$
As initial guess for the level set method we have chosen distinct piecewise constant functions
$c_0$, whose supports are shown in Figure~\ref{fig:separate-ident}~(b). It is worth noting that each $c_0$ corresponds
to a level set function $\phick{0} \in H^1(\Omega)$. In all cases the initial level set
function $\phick{0}$ was a paraboloid but with different minima.

The constant values assumed by the exact solution $c^*$ are supposed to be known, as well as the
exact diffusion coefficient $a^*$. Moreover, exact data was considered for the reconstruction
(i.e., $\delta = 0$) and we tested the iterative level set regularization without the penalizing
term $|H_{\ve} (\phi^j)|_{\bv}$, i.e., $\beta_j = 0$ (see \cite[Remark 5.1]{FSL05} ). 

In this and in all the following computed experiments of this Section, we considered the operator $P_{\ve}$ defined in~(\ref{eq:def-Pve}) with $\ve=1/10$.
This election was motivated by the fact that as $\ve$ increases, the supports
of the functions appearing on the right-hand side of \eqref{eq:formal01} and \eqref{eq:formal02}
become larger (due to the term $H_{\ve}$). Consequently, 
the updates $\delta\phiak{k}$, $\delta\phick{k} \in H^1(\Omega)$ given by these equations have large
values. If $\ve$ becomes too large, the level set method becomes unstable. Therefore, $\ve$ was chosen to match the
mesh size considered to solve the boundary problems.

The inverse problem we tackle here reduces to a shape identification problem for the absorption
coefficient.

Notice that, for each initial guess $c_0$ in Figure~\ref{fig:separate-ident}~(b), a
corresponding number of steps is plotted. It stands for the number of iterations needed
to compute an approximation of $c^*$ (starting from the corresponding $c_0$) with a
precision of $10^{-2}$ in the $L^2$-norm.

This experiment allow us to determine the computational effort necessary for the
reconstruction of $c^*$ with respect to distinct choices of $c_0$.
{\bf The identification problem for the absorption coefficient is known to be mildly
ill-posed}~\cite{EngHanNeu96, KNS08}.
This fact is in agreement with the values plotted in Figure~\ref{fig:separate-ident}~(b),
in the sense that the number of iterations necessary to achieve a good quality
reconstruction {\bf do not strongly oscillate} with the initial guess.

\begin{figure}[b!]
\centerline{
\includegraphics[width=5.7cm]{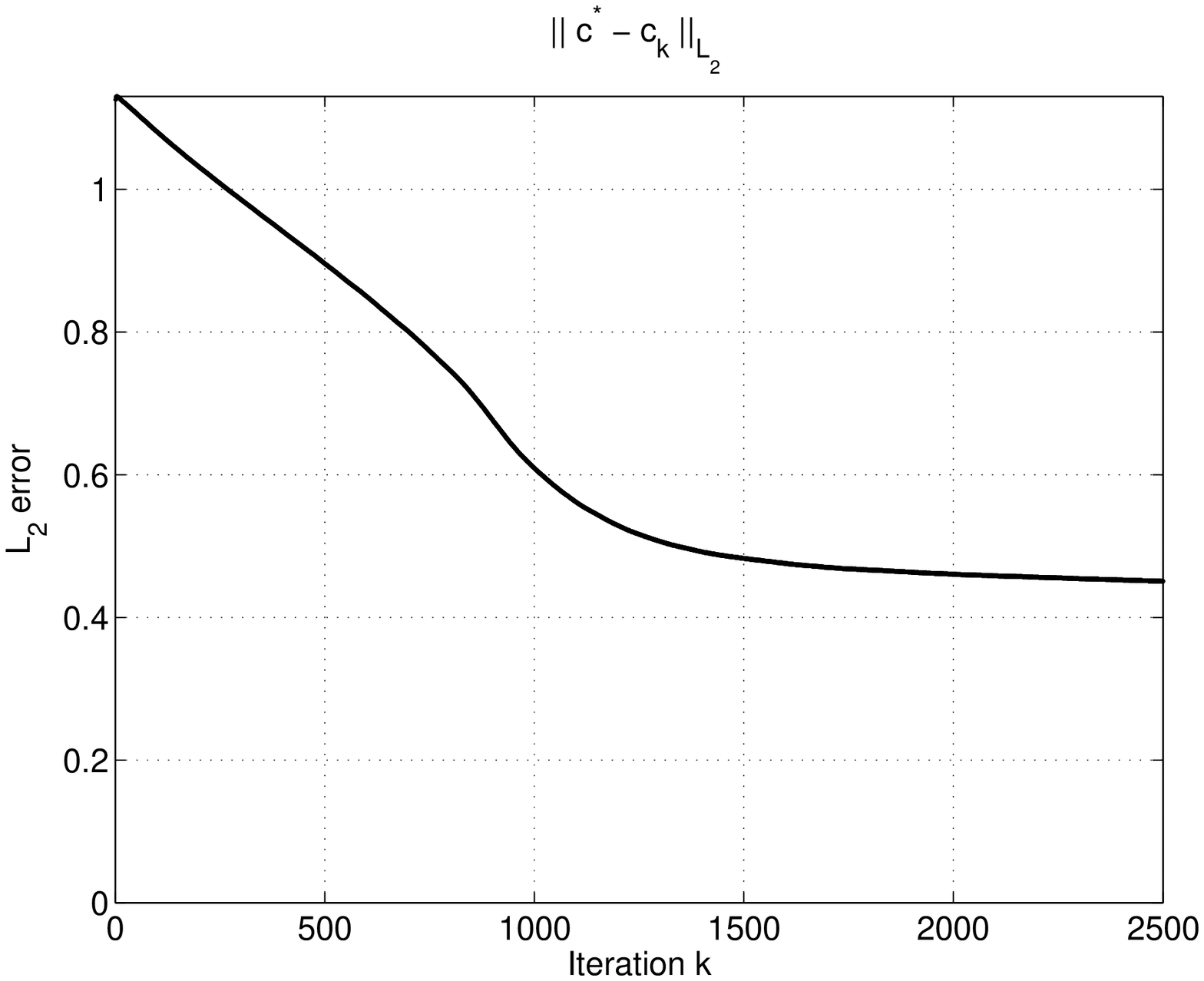}       
\includegraphics[width=5.5cm] {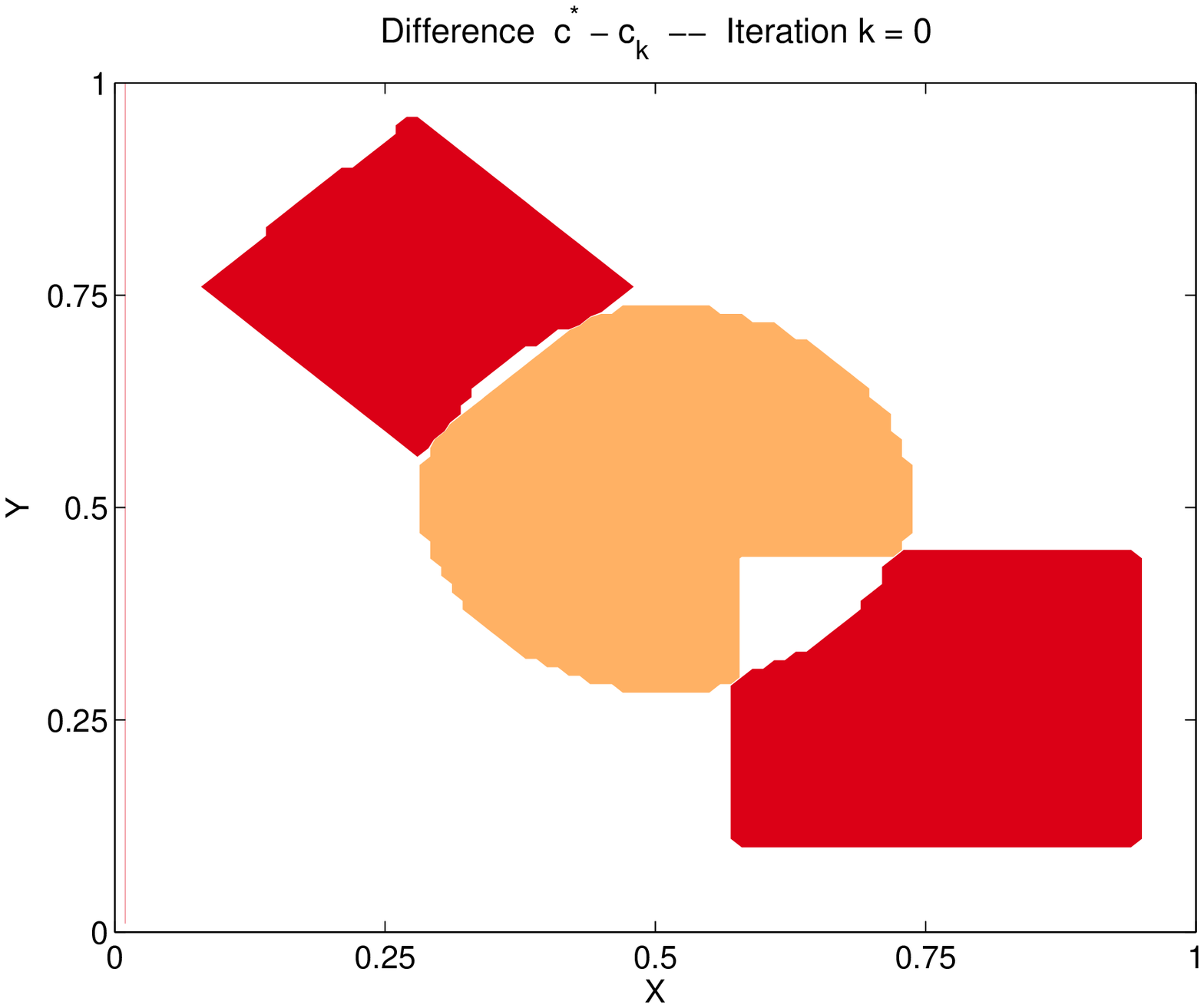}           
\includegraphics[width=5.5cm] {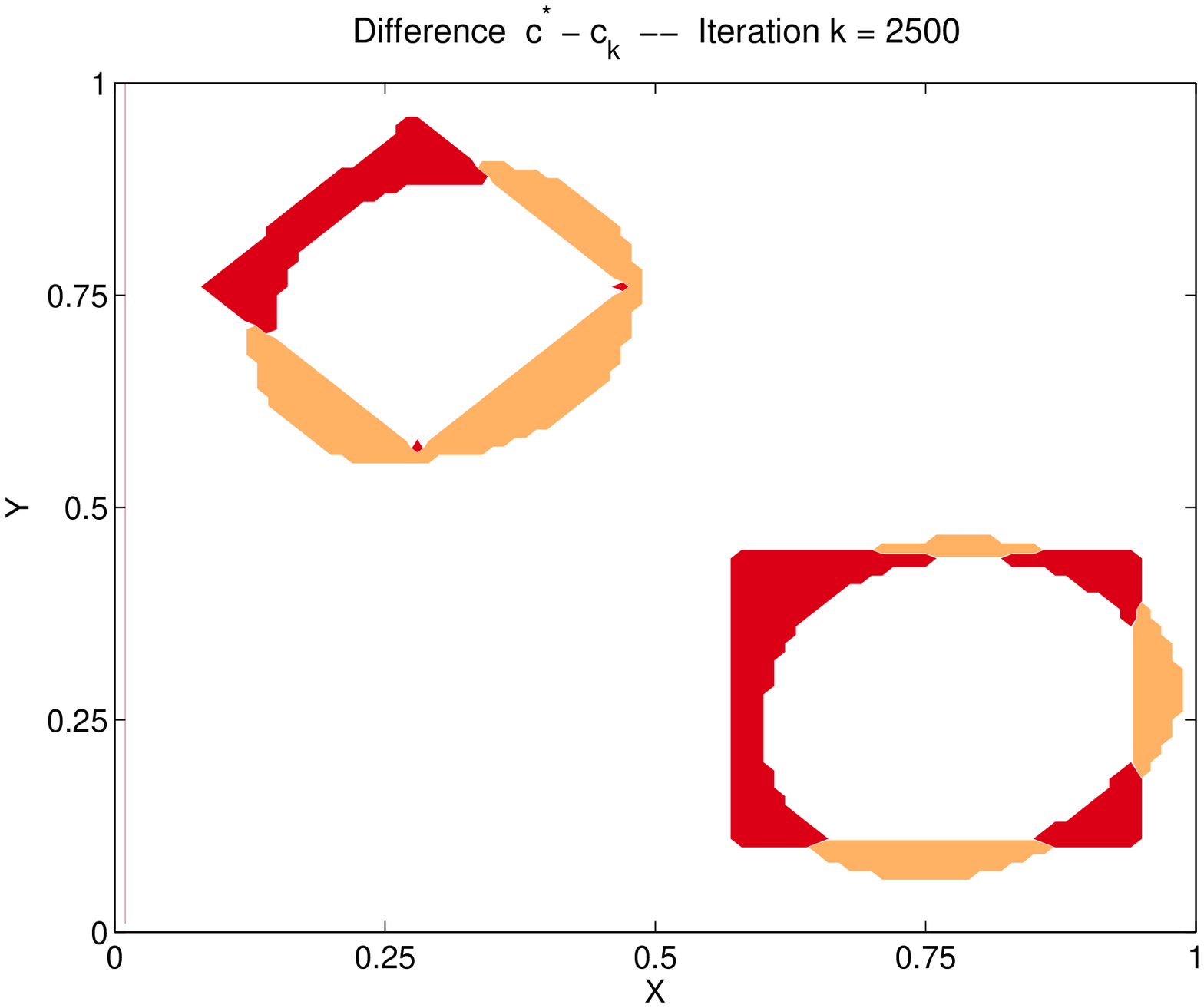} } 
\centerline{\hfil (a) \hskip5.5cm (b) \hskip5.5cm (c) \hfil}
\centerline{
\includegraphics[width=5.7cm]{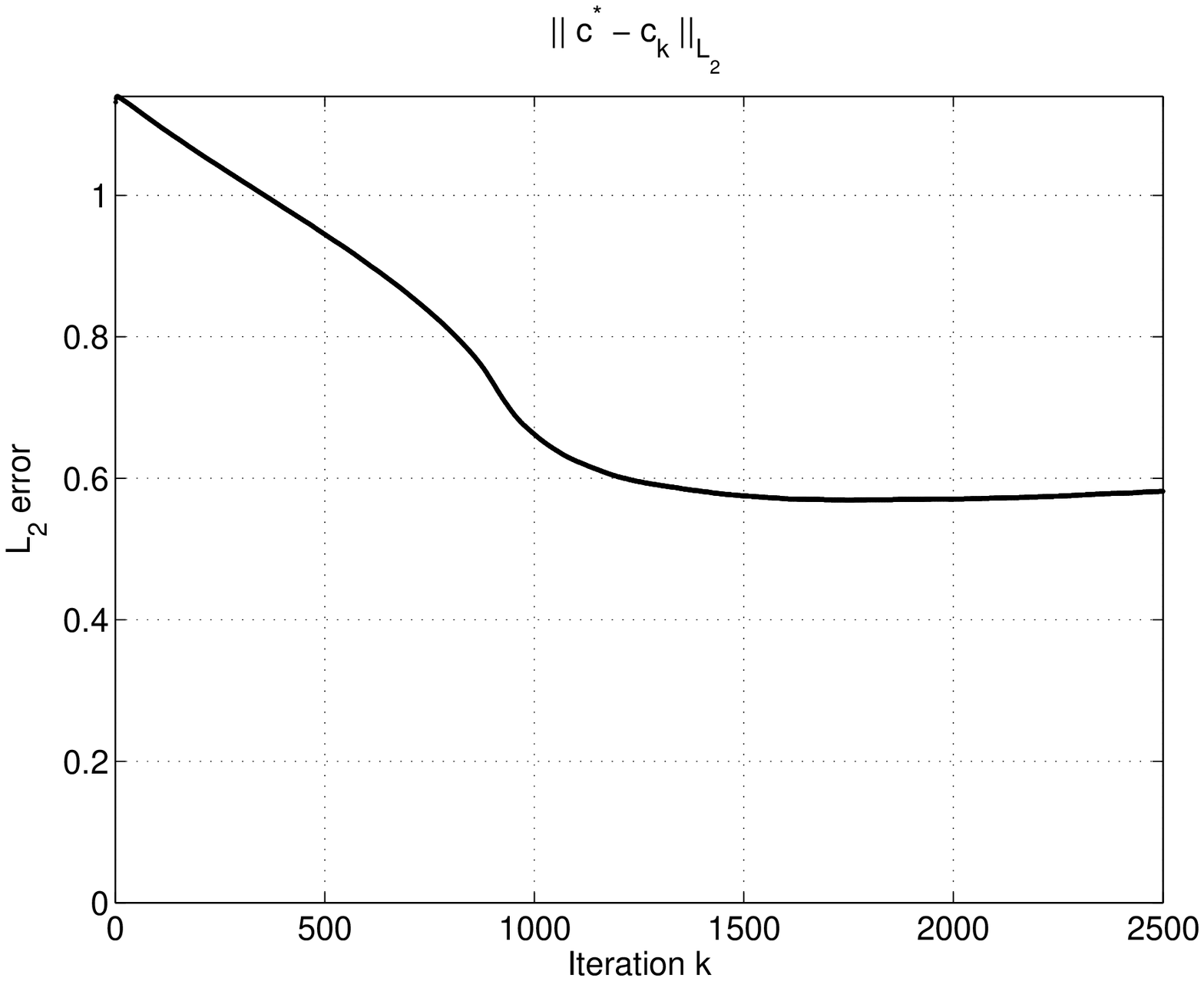} 
\includegraphics[width=5.5cm]{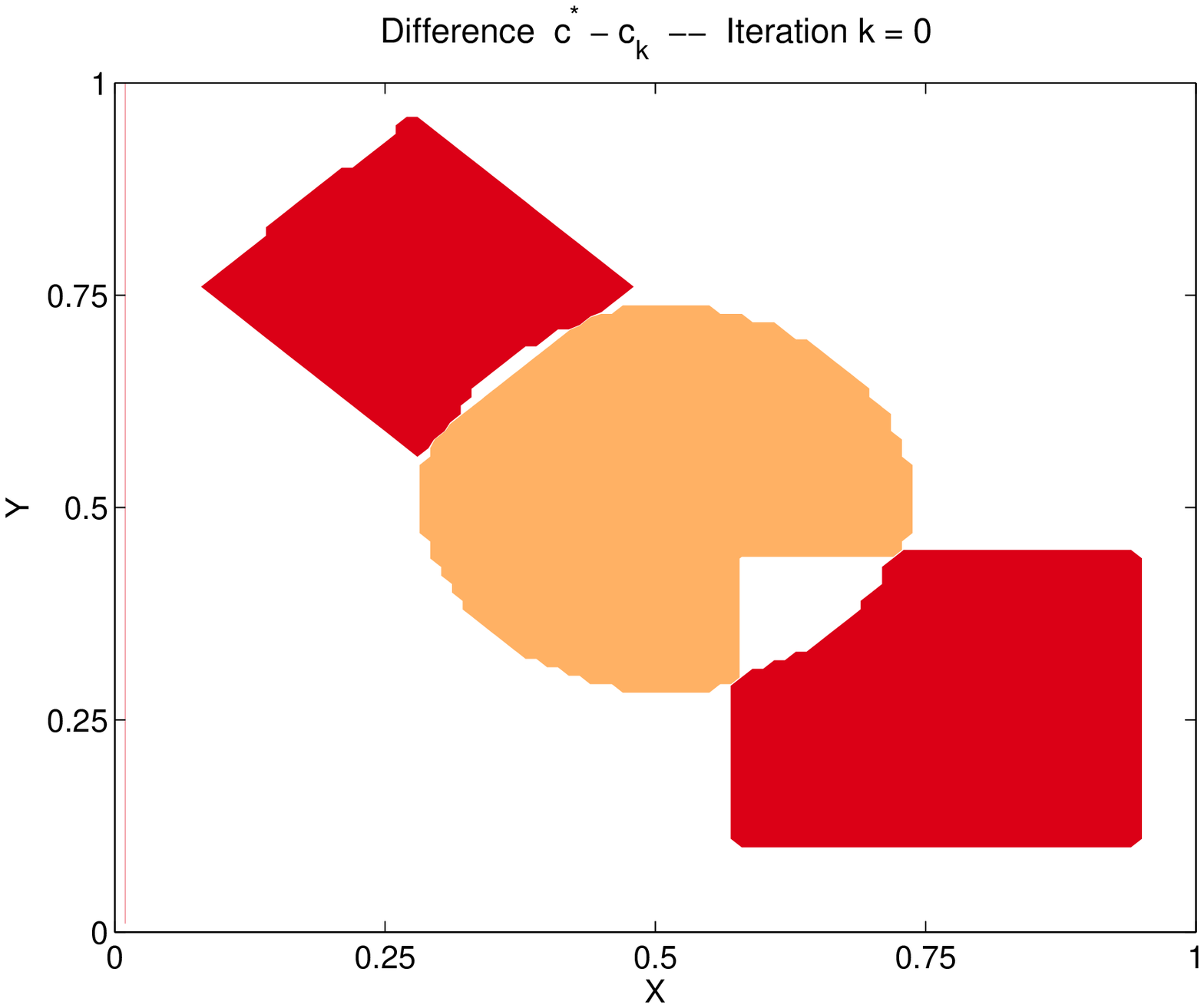}  
\includegraphics[width=5.5cm]{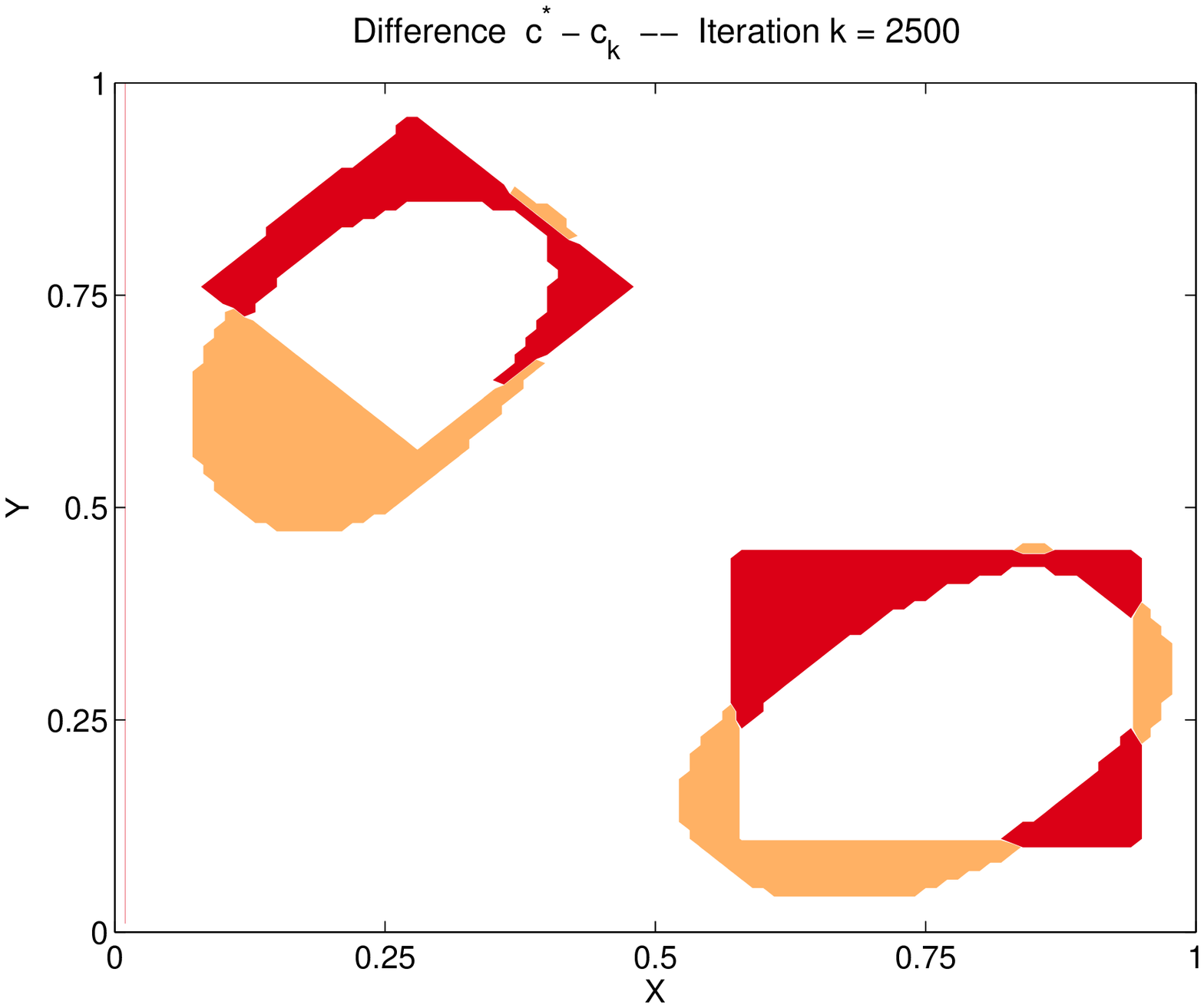} }  
\centerline{\hfil (d) \hskip5.5cm (e) \hskip5.5cm (f) \hfil}
\caption{\small Section~\ref{ssec:numer-c}, 2nd example.
{\bf (a)}--{\bf (c)} Identification of $c^*$ from exact knowledge of $a^*$.
{\bf (a)} Evolution of the $L^2$ error.
{\bf (b)} Difference between the initial guess $c_0$ and $c^*$.
{\bf (c)} Difference between $c_{2500}$ and $c^*$.
{\bf (d)}--{\bf (f)} Identification of $c^*$ from partial knowledge of $a^*$.
{\bf (d)} Evolution of the $L^2$ error.
{\bf (e)} Difference between the initial guess $c_0$ and $c^*$.
{\bf (f)} Difference between $c_{2500}$ and $c^*$. }
\label{fig:example-c2}
\end{figure}

We conduct yet another experiment for identifying only the absorption coefficient.
This time, we assume the exact solution of 
problem \eqref{eq:1}--\eqref{eq:2} to be
given by the coefficient pair $(a^*,c^*)$ in Figure~\ref{fig:separate-ident}~(d).
The setup of the inverse problem remains the same (domain, available data, parameter
to output operator, etc.). \\
On the first run of the algorithm, see Figure~\ref{fig:example-c2}~(a)--(c), the
diffusion coefficient $a^*$ is assumed to be exactly known. In this situation, the level set method is
able to identify the absorption coefficient (see Figure~\ref{fig:example-c2}~(a)
for the evolution of the iteration error), and the iteration stagnates after that.
The corresponding differences between the exact solution $c^*$ and the initial guess $c_0$ and between the exact solution $c^*$ and the final
iterate $c_{2500}$ are plotted in pictures~(b) and~(c) respectively. \\
On the second run, see Figure~\ref{fig:example-c2}~(d)--(f), we use the approximation
$a(x) \equiv 1$ for diffusion coefficient $a^*$ and iterate to recover $c^*$. In this case, the
level set method is still able to identify the absorption coefficient, however with a
poorer accuracy. Once again, the iteration stagnates after the numerical convergence
is reached (see Figure~\ref{fig:example-c2}~(d) for the evolution of the error). The
corresponding differences for the initial guess $c_0$ and for the
final iterate $c_{2500}$ are plotted in pictures~(e) and~(f) respectively. \\
Notice that the number of iteration steps needed to recover $c^*$ (approximately
2000 in both runs) is much larger than in the previous experiment. This can be
explained by the complexity of the geometry of the support of $c^*$ in this
experiment \cite{FSL05}. This complexity and non smooth geometry also influence the quality of the reconstruction. 

\subsection{Identification of the diffusion coefficient} \label{ssec:numer-a}

In what follows we consider the identification of 
the diffusion coefficient $a$, based on
either total or partial knowledge of $c$. In the first set of experiments, we consider
problem \eqref{eq:1}--\eqref{eq:2} in the unit square with four pairs of NtD experiments,
and the same exact solution $(a^*,c^*)$ as in Section~\ref{ssec:numer-c} (see Figure~%
\ref{fig:separate-ident}~(a)).

\begin{figure}[b!]
\centerline{
\includegraphics[width=5.7cm]{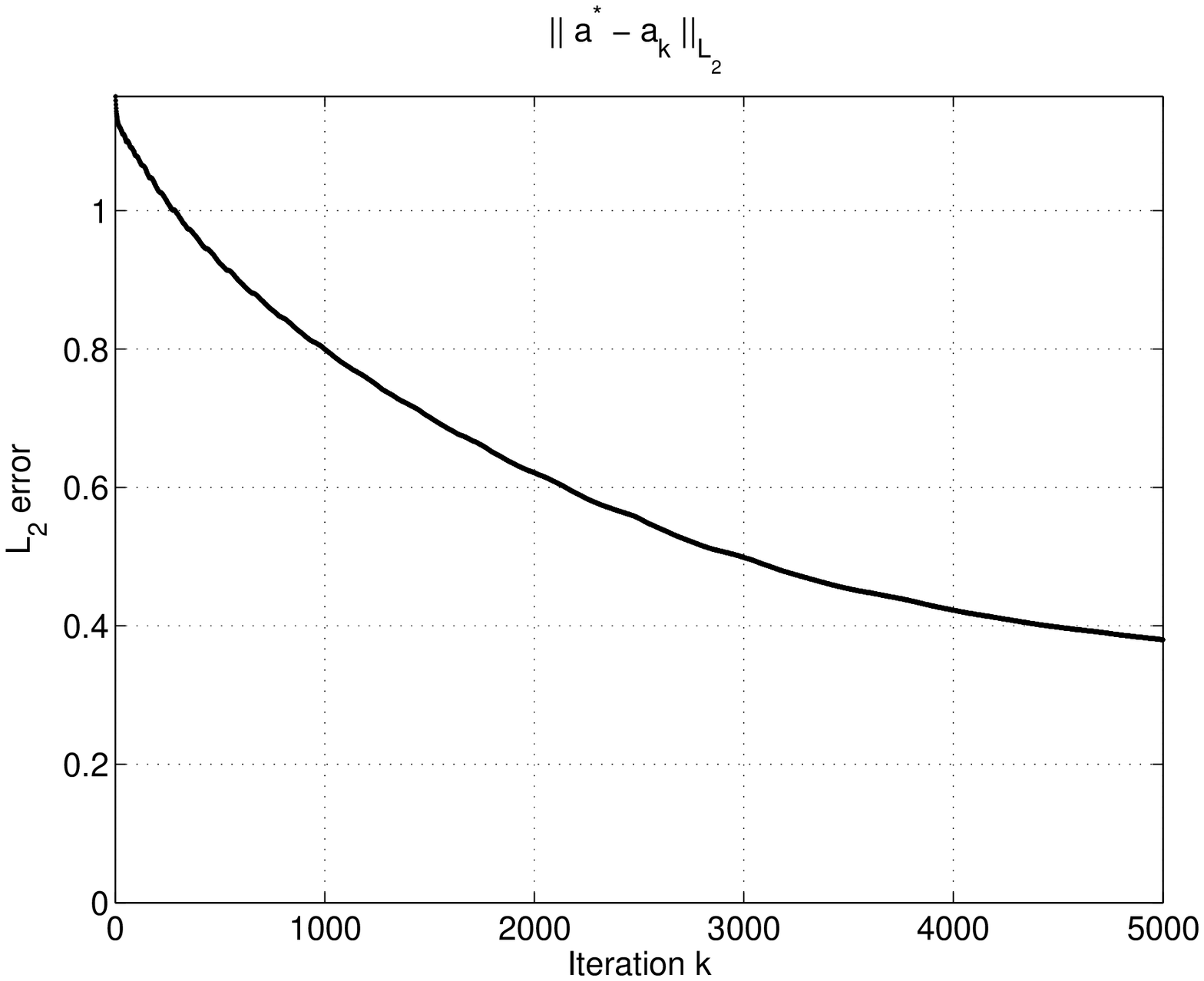}      
\includegraphics[width=5.5cm]{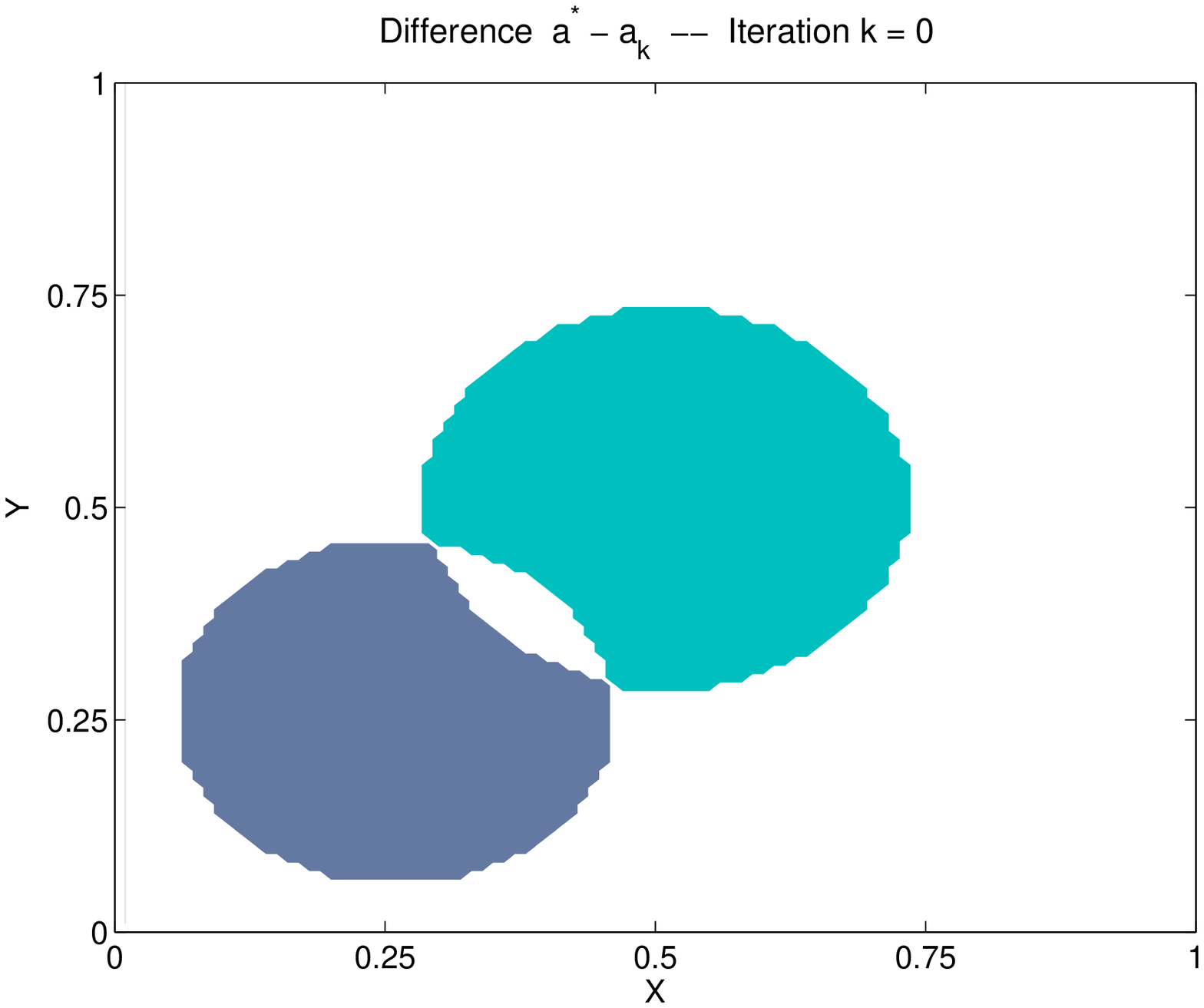}      
\includegraphics[width=5.5cm]{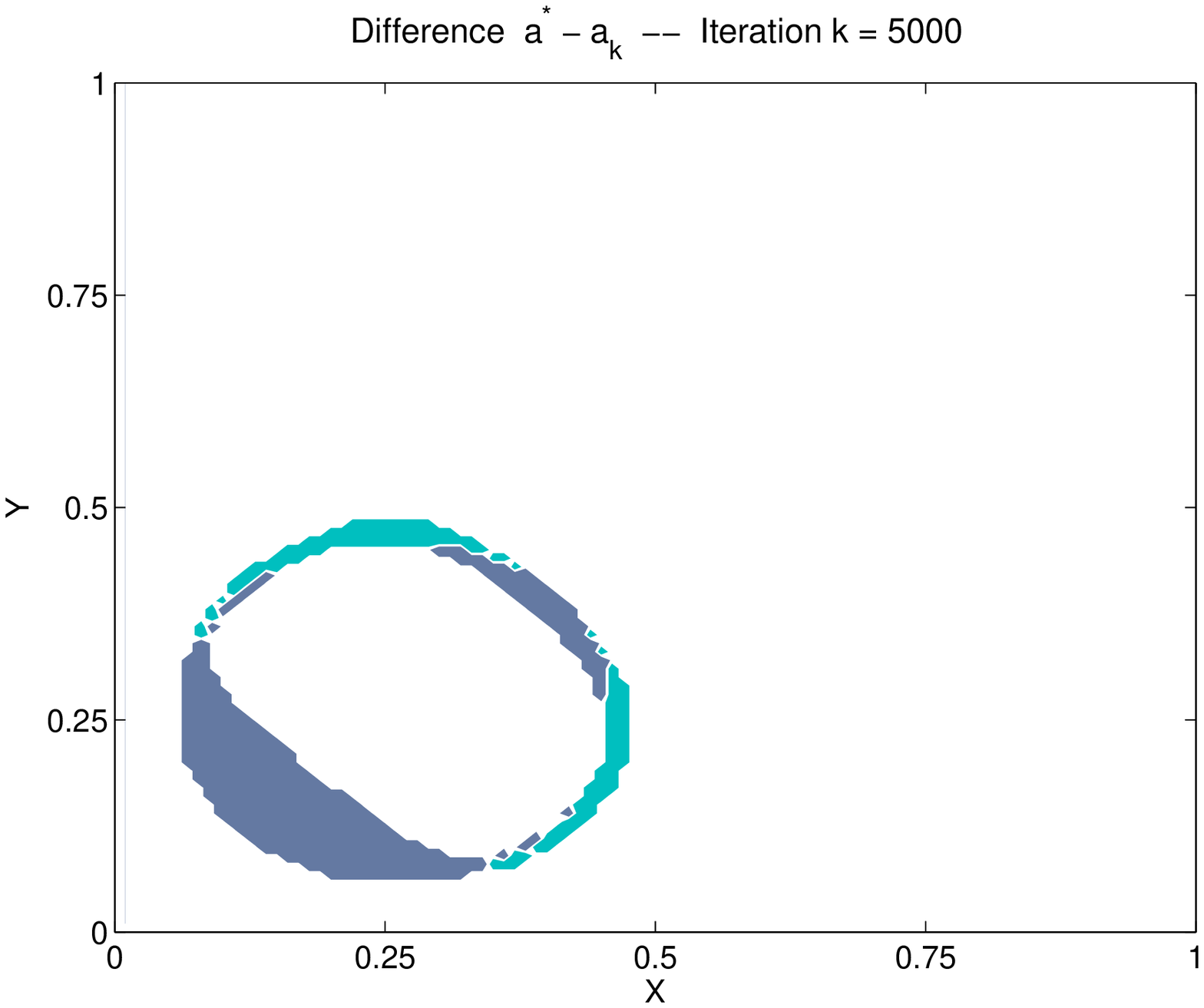} } 
\centerline{\hfil (a) \hskip5.5cm (b) \hskip5.5cm (c) \hfil}
\centerline{
\includegraphics[width=5.7cm]{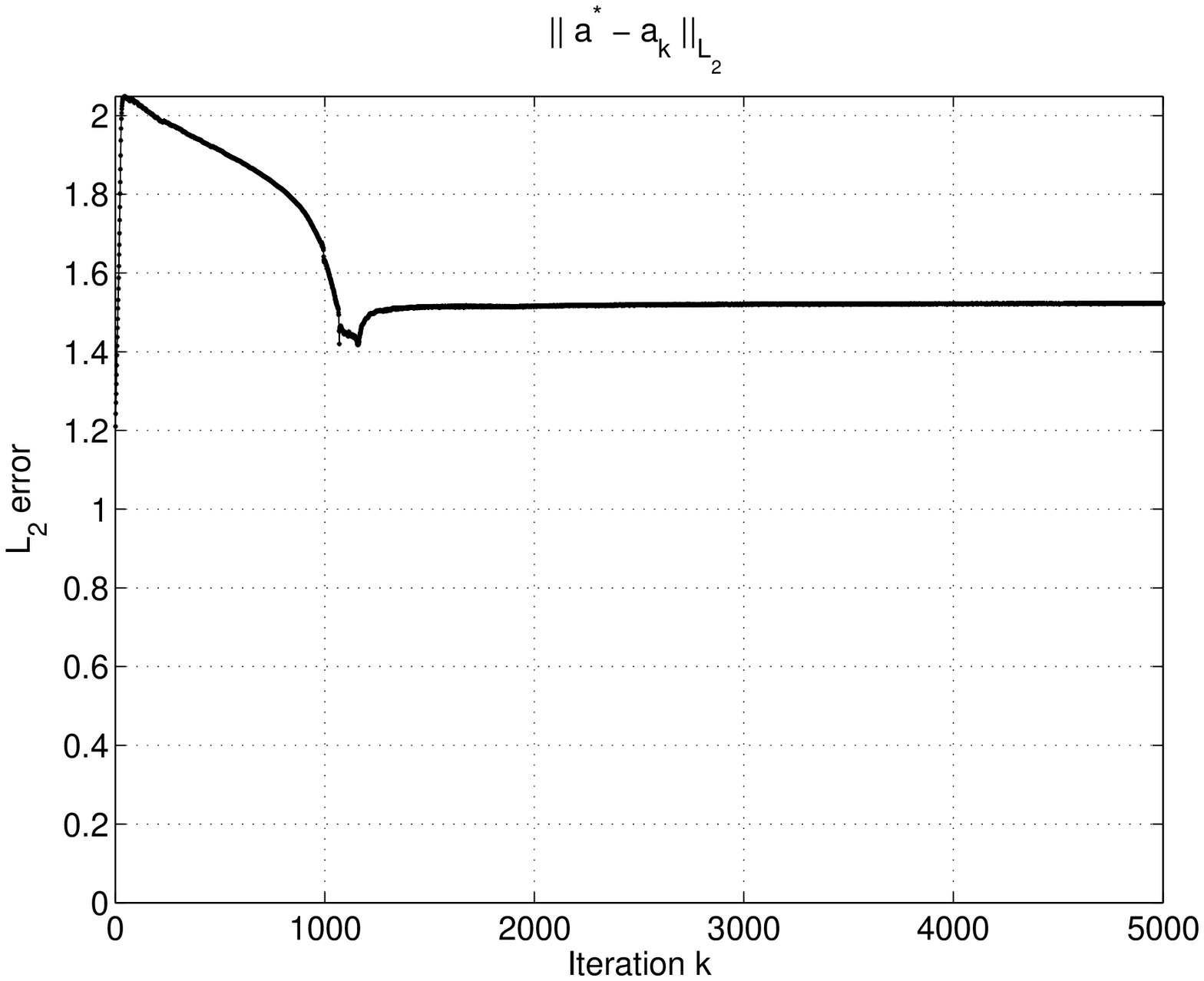}      
\includegraphics[width=5.5cm]{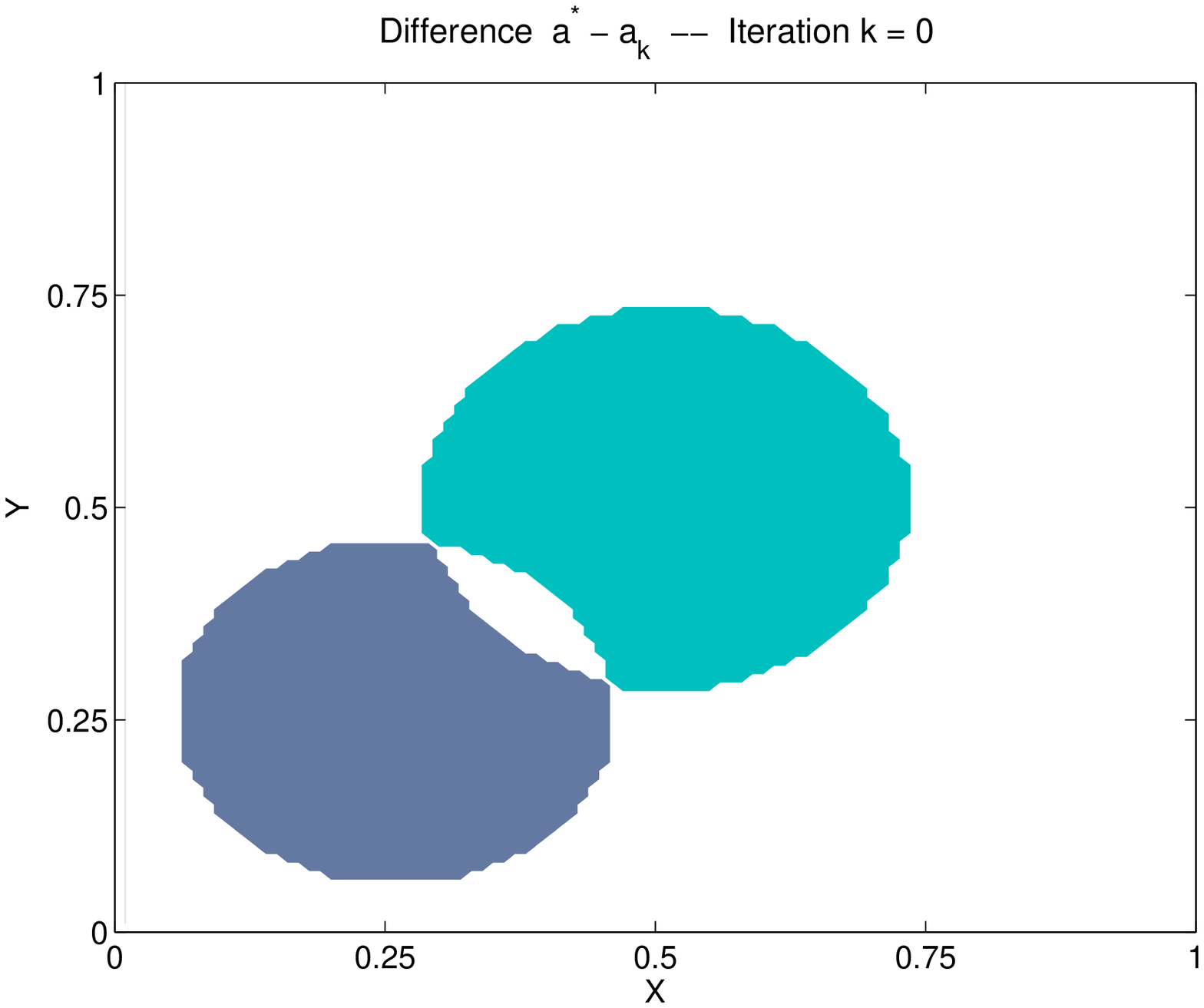}      
\includegraphics[width=5.5cm]{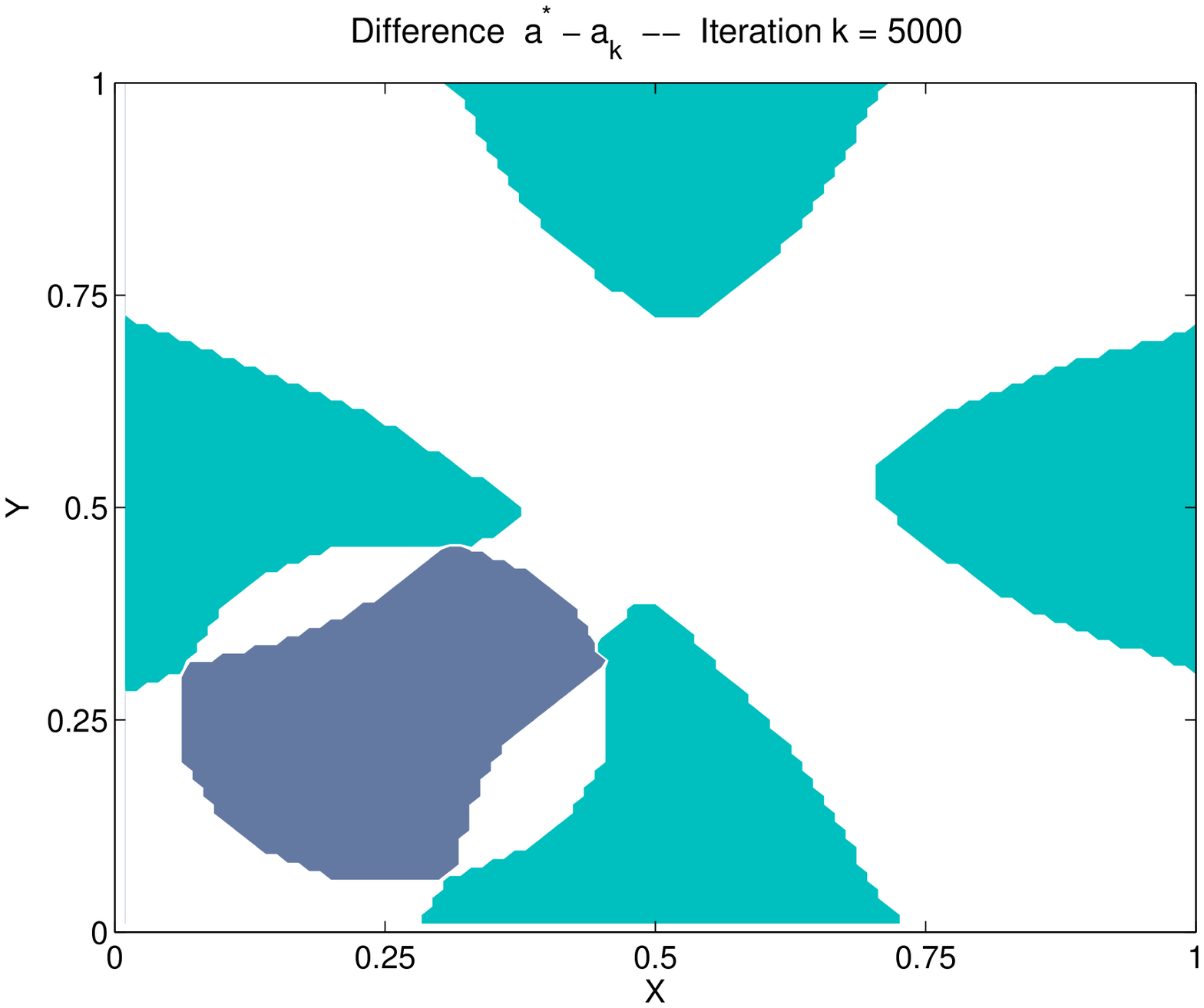} }  
\centerline{\hfil (d) \hskip5.5cm (e) \hskip5.5cm (f) \hfil}
\caption{\small Section~\ref{ssec:numer-a}, 2nd example.
{\bf (a)}--{\bf (c)} Identification of $a^*$ from exact knowledge of $c^*$.
{\bf (a)} Evolution of the $L^2$ error.
{\bf (b)} Difference between the initial guess $a_0$ and $a^*$.
{\bf (c)} Difference between $a_{5000}$ and $a^*$.
{\bf (d)}--{\bf (f)} Identification of $a^*$ from partial knowledge of $c^*$.
{\bf (d)} Evolution of the $L^2$ error.
{\bf (e)} Difference between the initial guess $a_0$ and $a^*$.
{\bf (f)} Difference between $a_{5000}$ and $a^*$. }
\label{fig:example-a2}
\end{figure}

As initial guess for the level set method we choose distinct piecewise constant functions
$a_0$, whose supports are shown in Figure~\ref{fig:separate-ident}~(c). Analogous as
in Section~\ref{ssec:numer-c}, the constant values of the exact solution $a^*$
are assumed to be known, as well as the exact absorption coefficient $c^*$.
Moreover, exact data are used for the reconstruction (i.e., $\delta = 0$) and the scaling factors $\beta_j=0$.

This time, the inverse problem reduces to a shape identification problem for the
diffusion coefficient.
Once again we plot, for each initial guess $a_0$, a corresponding number of steps (see
Figure~\ref{fig:separate-ident}~(c)). It stands for the number of iterations needed to
compute an approximation of $a^*$ (starting from the corresponding $a_0$) with a precision
of $10^{-2}$ in the $L^2$-norm.

This experiment allow us to determine the computational effort necessary for the
reconstruction of $a^*$ with respect to distinct choices of $a_0$.
{\bf The identification problem for the diffusion coefficient is known to be
exponentially ill-posed} \cite{EngHanNeu96, Isa06}.
This fact is in agreement with the values plotted in Figure~\ref{fig:separate-ident}~(c),
meaning that the number of iterations necessary to achieve a good quality reconstruction
{\bf does strongly oscillate} with the initial guess.

We conduct yet another experiment for identifying only the diffusion coefficient.
This time, we assume the exact solution of 
problem \eqref{eq:1}--\eqref{eq:2} to be
given by the coefficient pair $(a^*,c^*)$ in Figure~\ref{fig:separate-ident}~(d).
The setup of the inverse problem remains the same (domain, available data, parameter
to output operator, etc.). \\
On the first run of the algorithm, see Figure~\ref{fig:example-a2}~(a)--(c), the
absorption coefficient $c^*$ is assumed to be exactly known. In this situation, the level set method is
able to identify the diffusion coefficient (see Figure~\ref{fig:example-a2}~(a)
for the evolution of the iteration error), and the iteration stagnates after that.
The corresponding differences between the exact solution $a^*$ and the initial guess $a_0$ and between the exact solution $a^*$ and the final
iterate $a_{5000}$ are plotted in pictures~(b) and~(c) respectively. \\
On the second run, see Figure~\ref{fig:example-a2}~(d)--(f), we considered
the approximation $\c(x) \equiv 1$ for the absorption
coefficient $c^*$ and iterate to recover $a^*$. In this case, the level set method
is no longer able to identify the diffusion coefficient. The
iteration once again stagnates, but this time at some configuration
far from the exact solution (see Figure~\ref{fig:example-a2}~(d) for
the evolution of the $L^2$ error). The corresponding differences for the initial guess $a_0$ and for the final iterate
$a_{5000}$ are plotted in pictures~(e) and~(f) respectively.

\subsection{Identification of both diffusion and absorption coefficients}
\label{ssec:numer-ac}

\begin{figure}[b!]
\centerline{
\includegraphics[width=6cm]{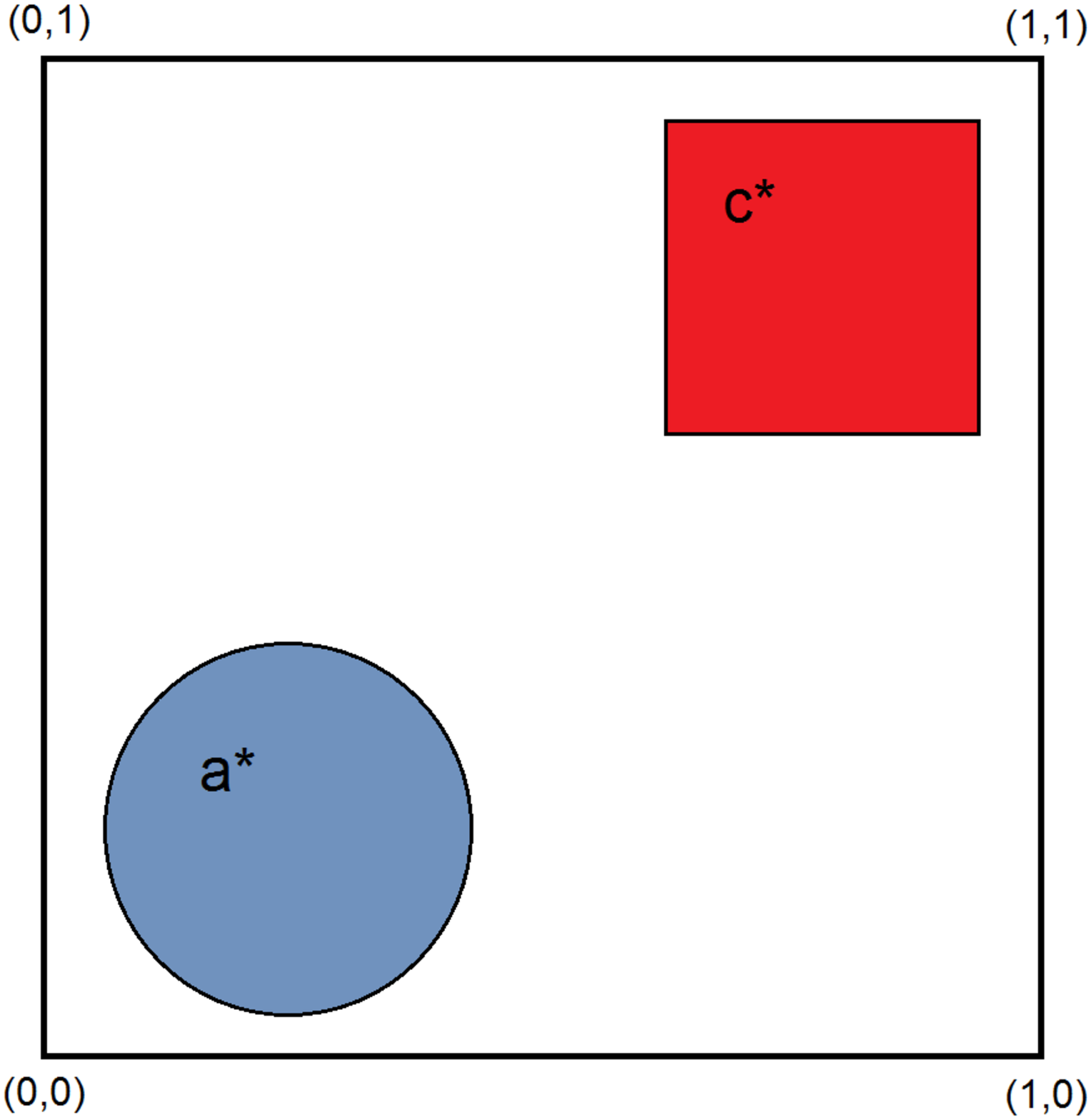}
\includegraphics[width=6cm]{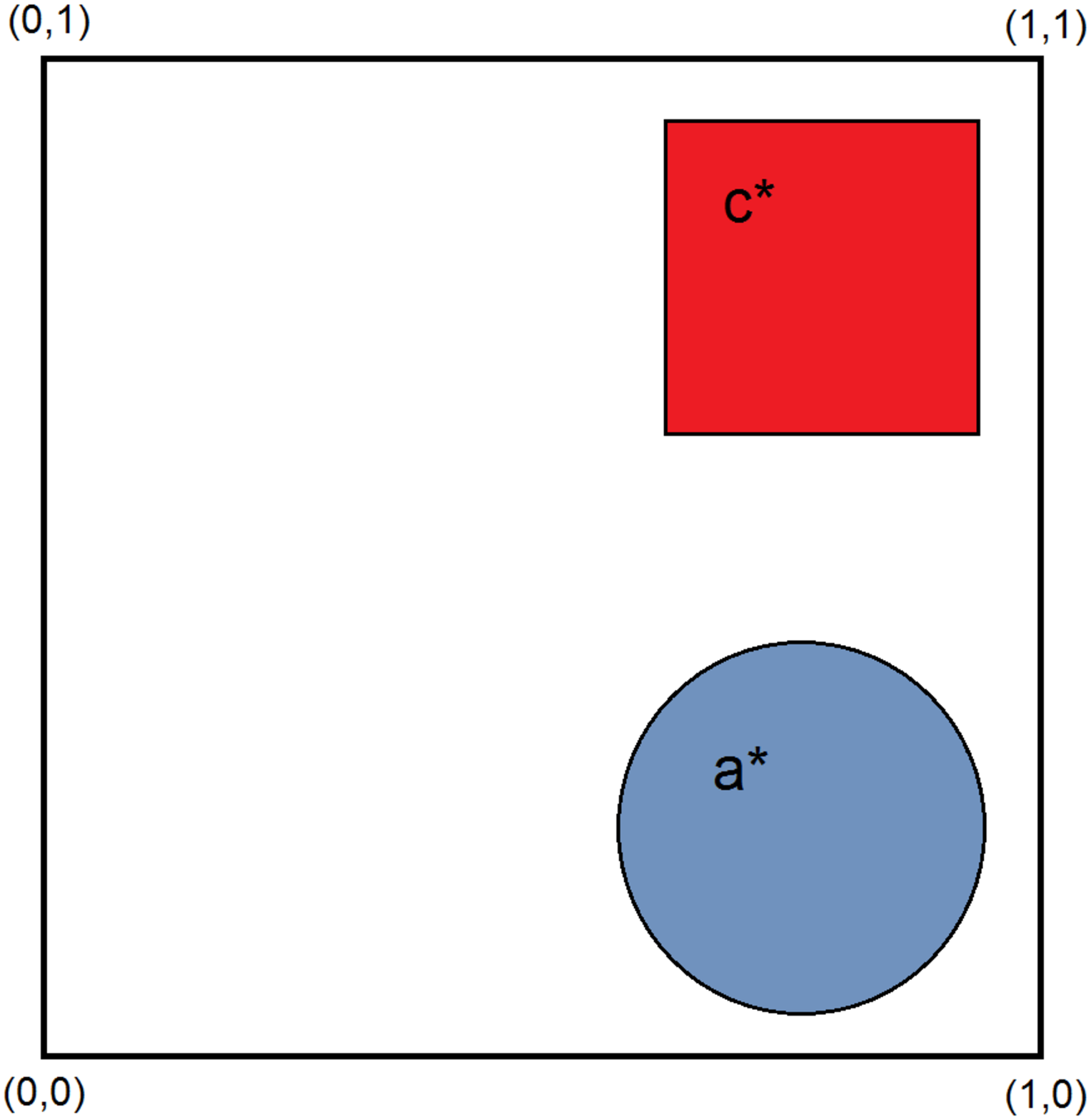}
\includegraphics[width=6cm]{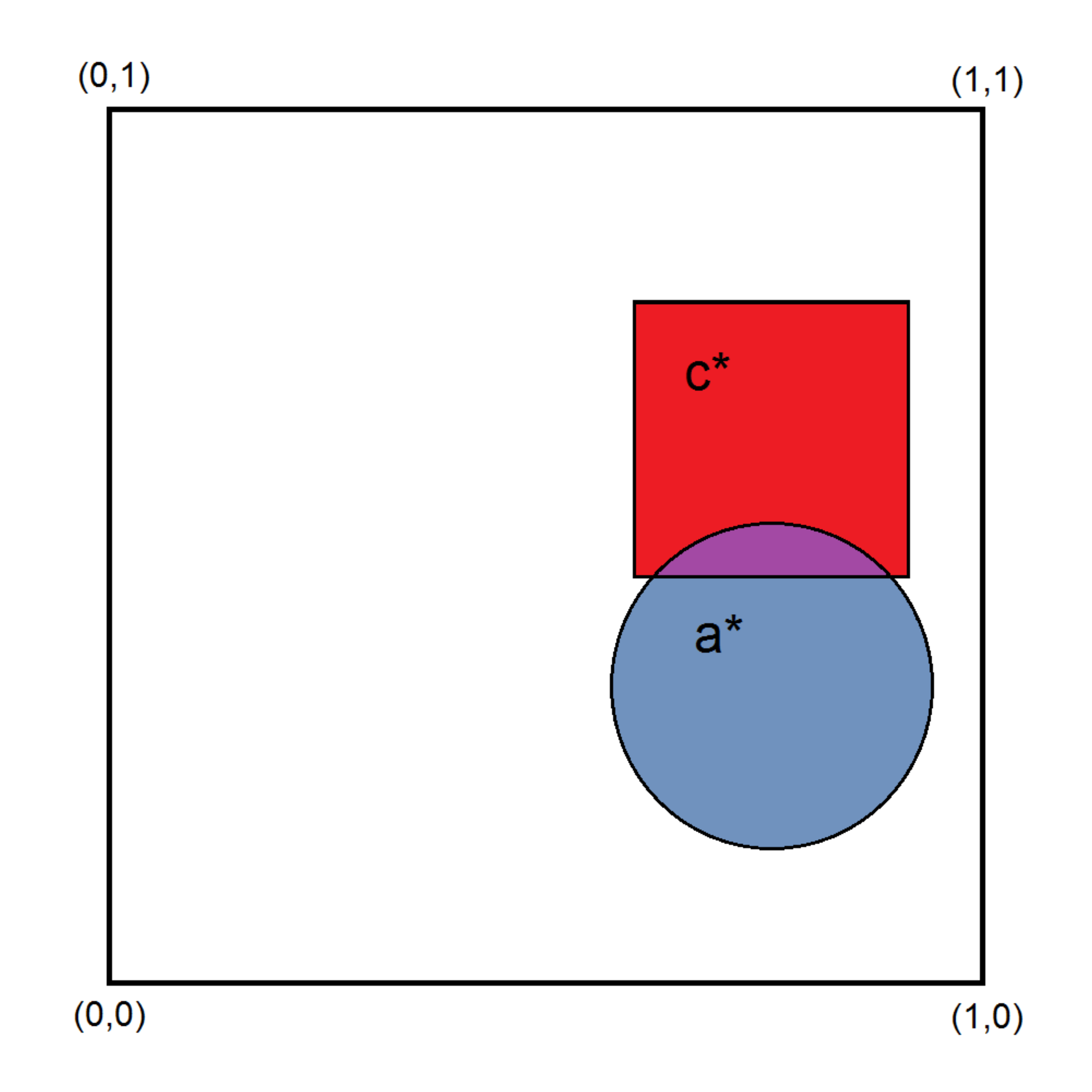} 
}
\centerline{\hfil (a) \hskip5.5cm (b) \hskip5.5cm (c) \hfil}
\caption{\small Section~\ref{ssec:numer-ac}.
{\bf (a)}--{\bf (c)} Exact solution for the first, second and third example. }
\label{fig:example-ac}
\end{figure}

In this last set of experiments we consider the level set algorithm for the simultaneous
identification of the coefficient pair $(a,c)$ 
in \eqref{eq:1}--\eqref{eq:2}. Three
examples are considered and the corresponding exact solutions are shown in Figure~\ref{fig:example-ac}.
The setup of the inverse problem is the same as in Sections~\ref{ssec:numer-c}
and~\ref{ssec:numer-a} (domain, available data, parameter to output operator, \dots).

In the first example, the solution pair $(a^*,c^*)$ is the one shown in Figure~%
\ref{fig:example-ac}~(a). In order to devise an efficient iteration strategy for
the simultaneous identification of both coefficients, we must take some facts into
account: \\
{\bf F1)} \ From the 2nd example in Section~\ref{ssec:numer-c}, we have learned that the
method for identifying $c^*$ performs well, even if a good approximation for
$a^*$ is not known (see Figure~\ref{fig:example-c2}~(d)--(f)). \\
{\bf F2)} \ On the other hand, from the 2nd example in Section~\ref{ssec:numer-a}, we have
learned that the level set method for identifying $a^*$ performs well if a good approximation
for $c^*$ is available, but may generate a sequence $a_k$ that does not approximate
$a^*$ if a good approximation to $c^*$ is not known. \\
{\bf F3)} \ In the first run of the level set algorithm for the simultaneous identification
of $(a^*,c^*)$ we updated both coefficients $(a_k,c_k)$ in every step and observed
that the iteration error $\| c_k - c^* \|$ decreases from the very first iteration.
However, the iteration error $\| a_k - a^* \|$ only starts improving when
$\| c_k - c^* \|$ is sufficiently small.

\begin{figure}[b!]
\centerline{
\includegraphics[width=4.3cm] {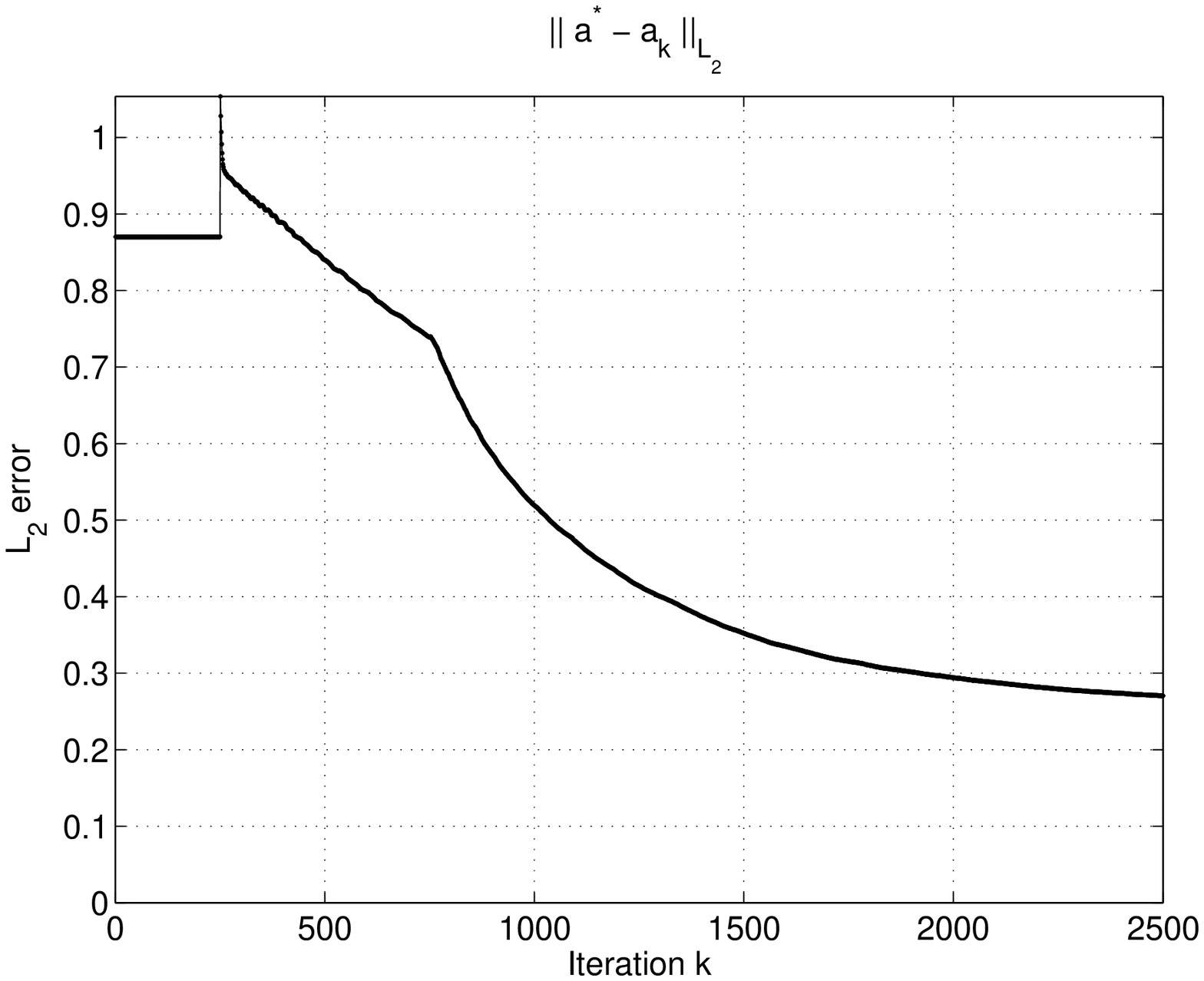}        
\includegraphics[width=4.2cm] {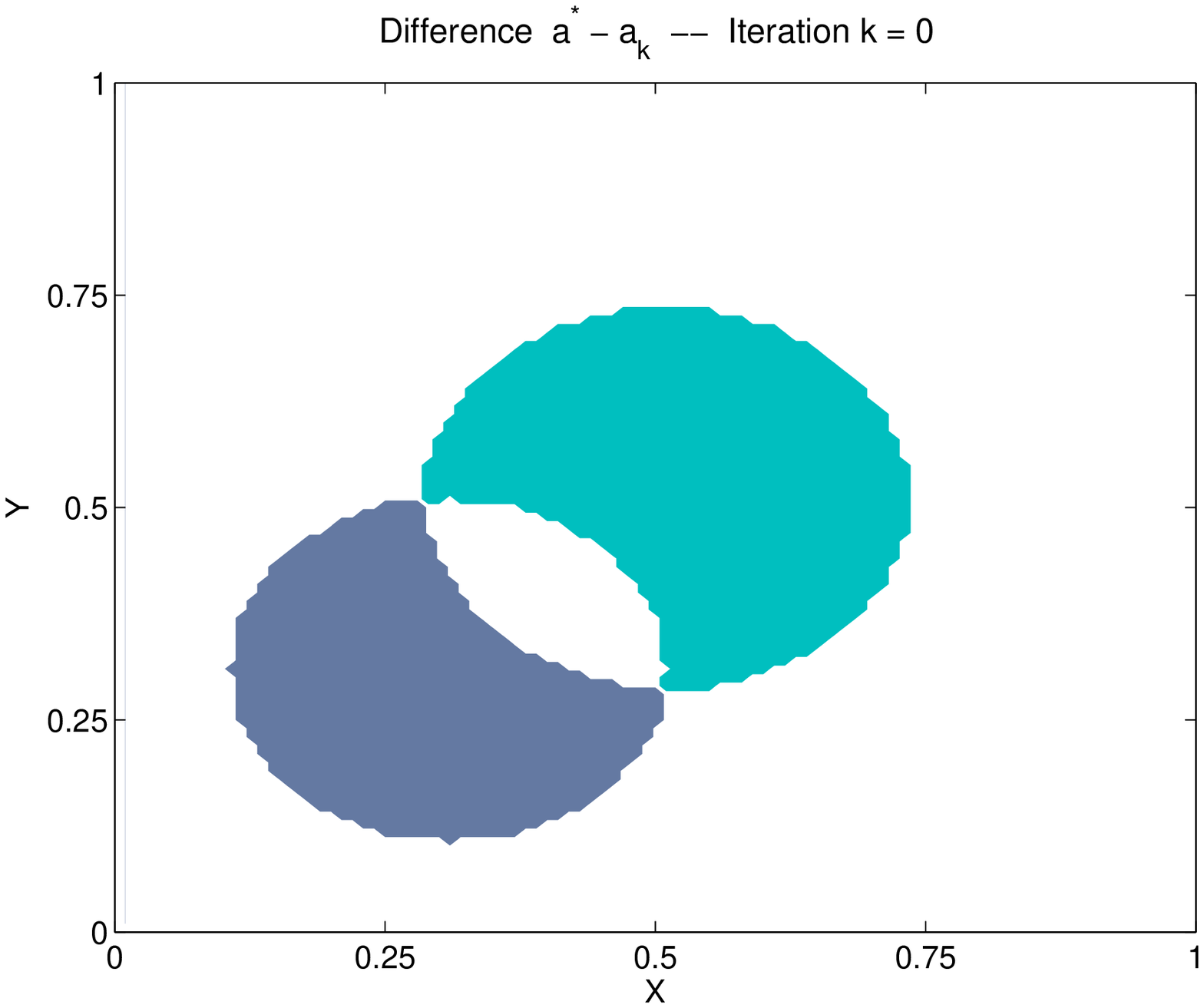} 
\includegraphics[width=4.2cm] {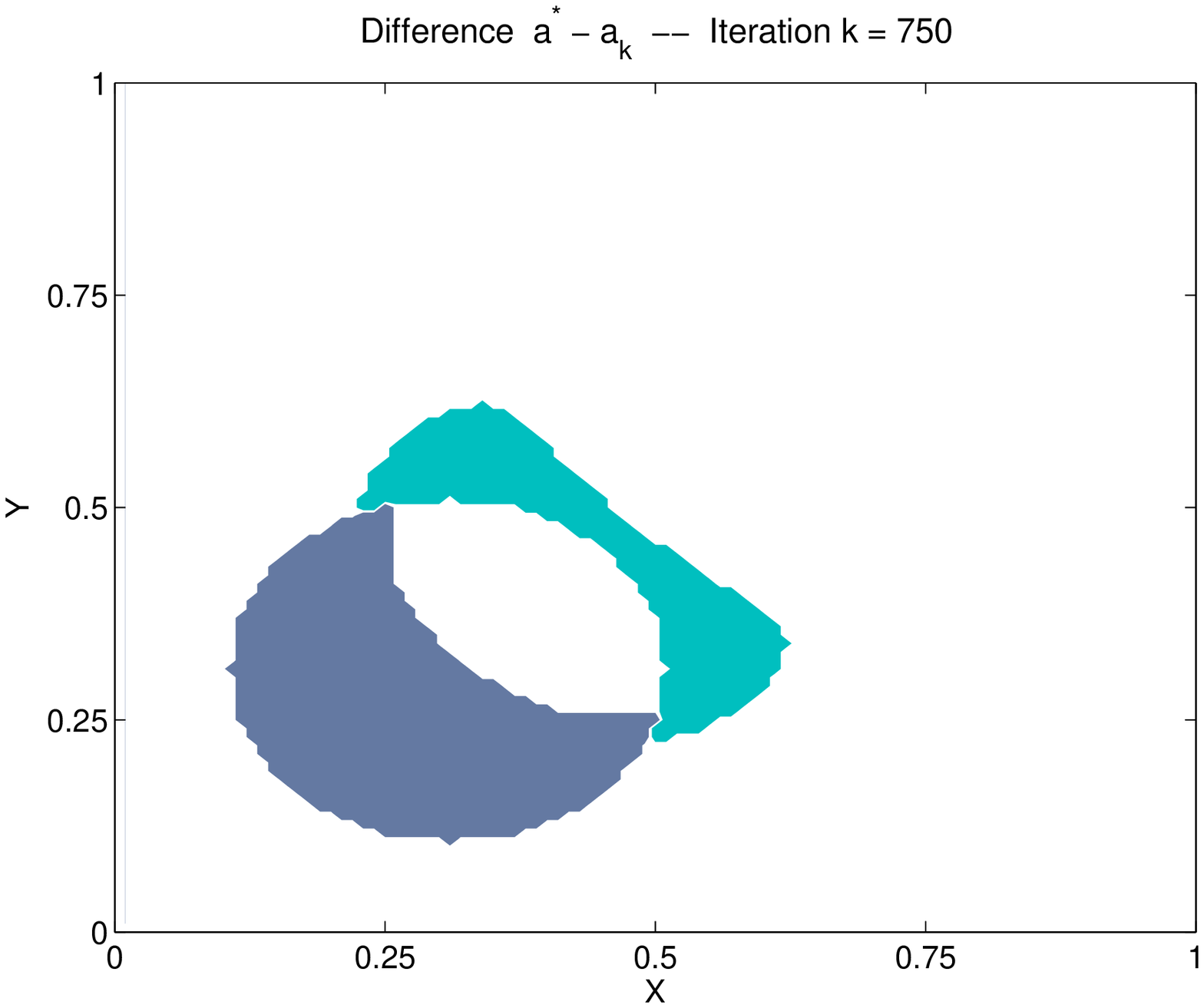}  
\includegraphics[width=4.2cm] {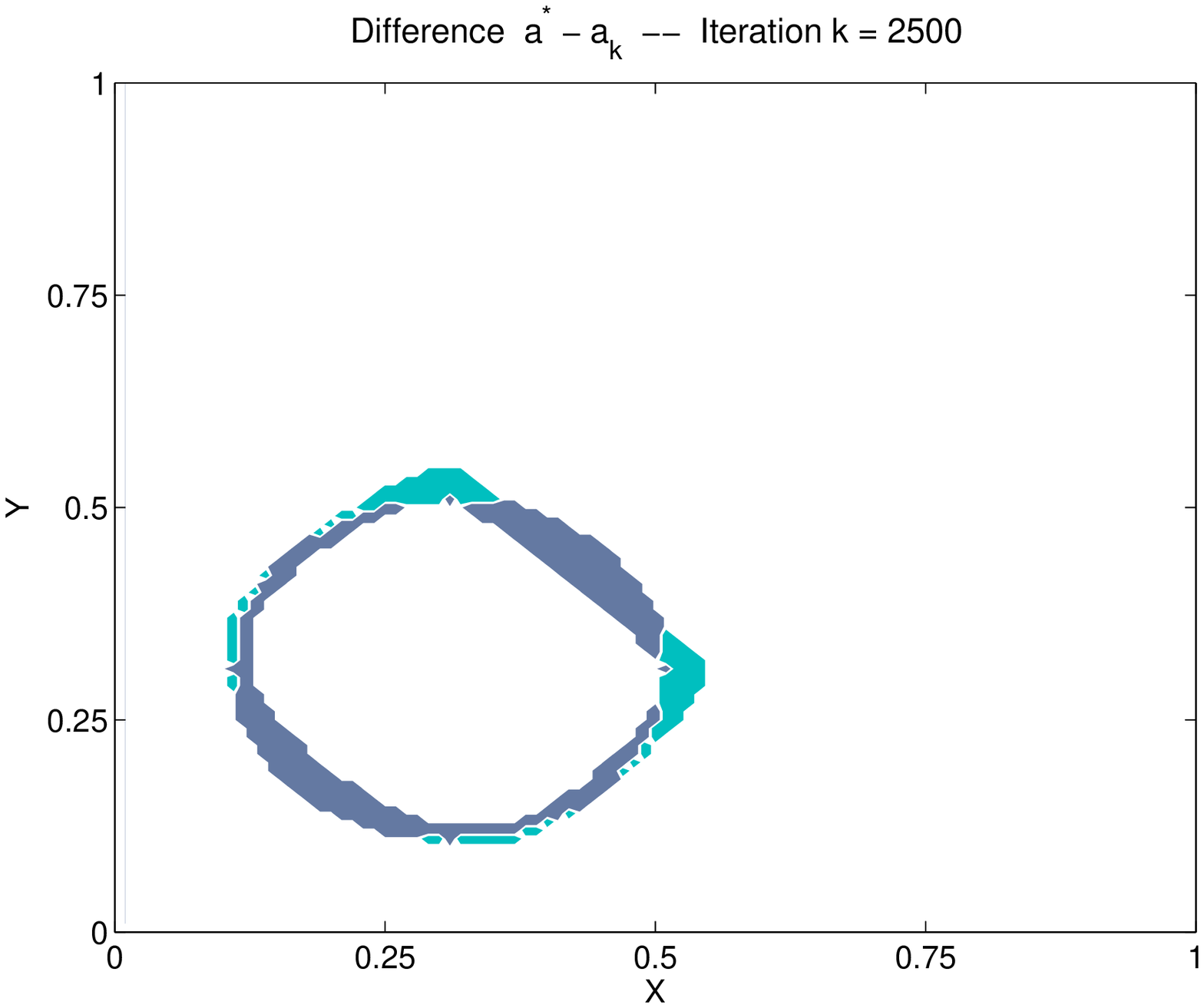} }   
\centerline{\hfil (a) \hskip4cm (b) \hskip4cm (c) \hskip4cm (d) \hfil}
\centerline{
\includegraphics[width=4.3cm] {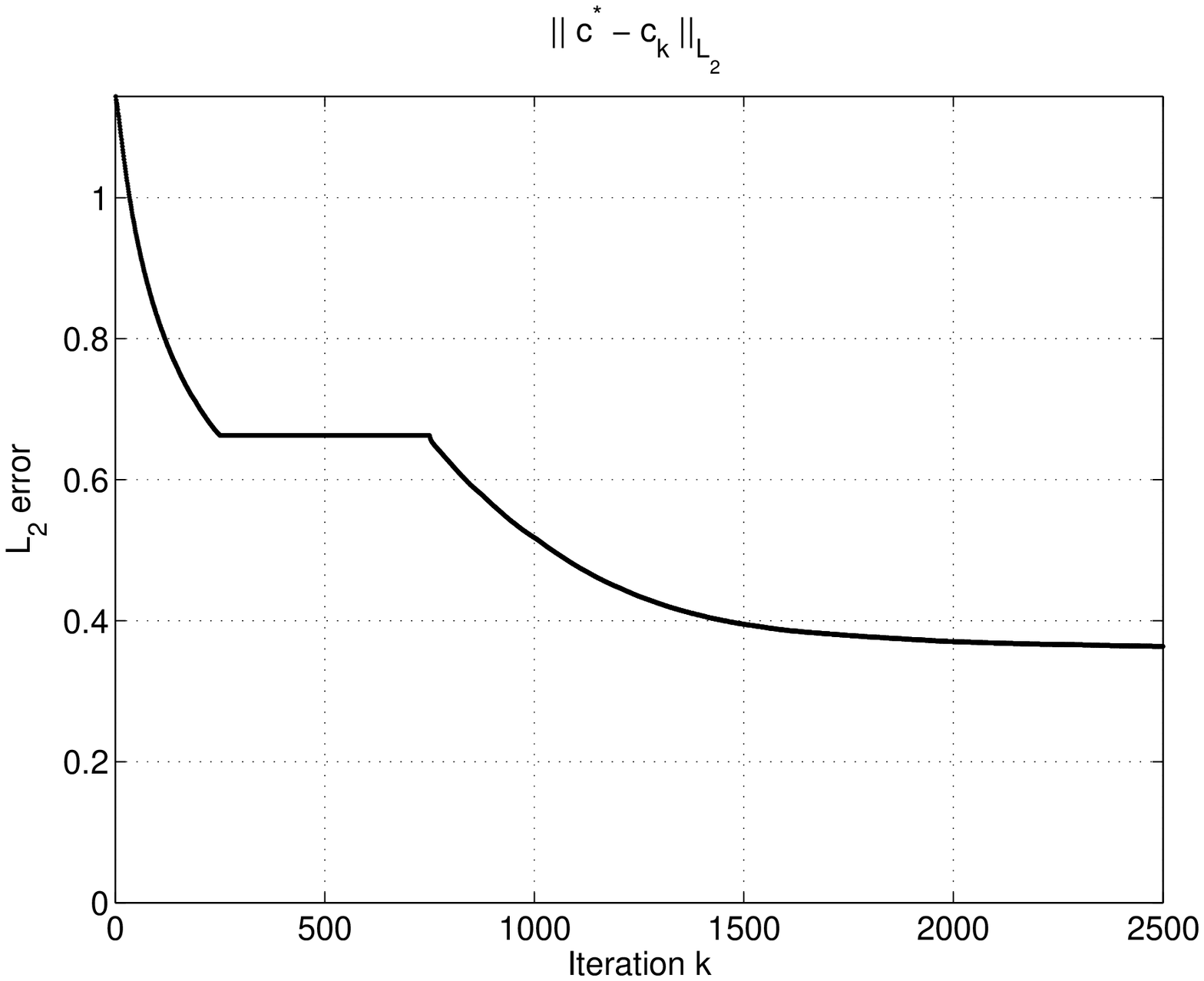}  
\includegraphics[width=4.2cm] {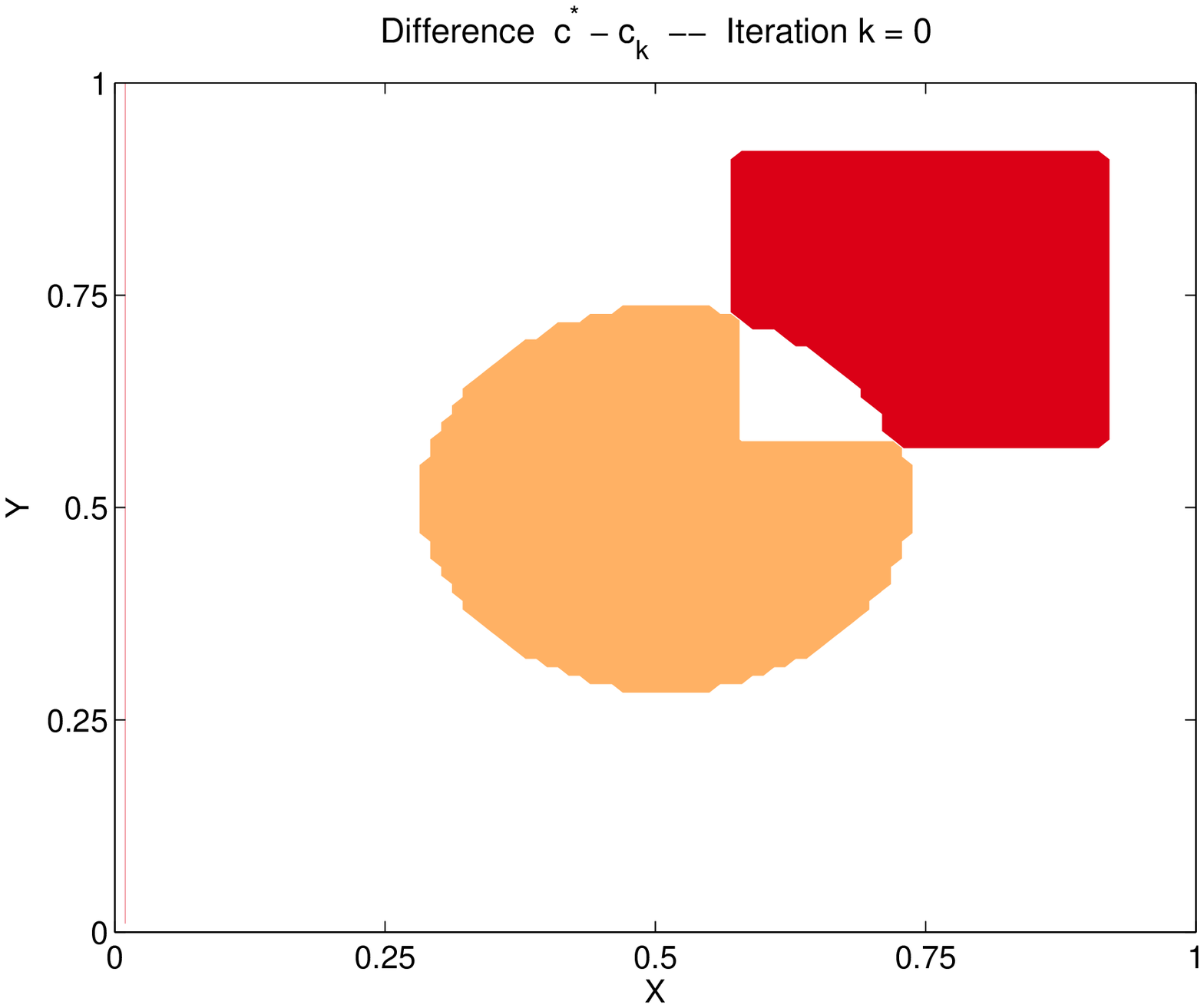} 
\includegraphics[width=4.2cm] {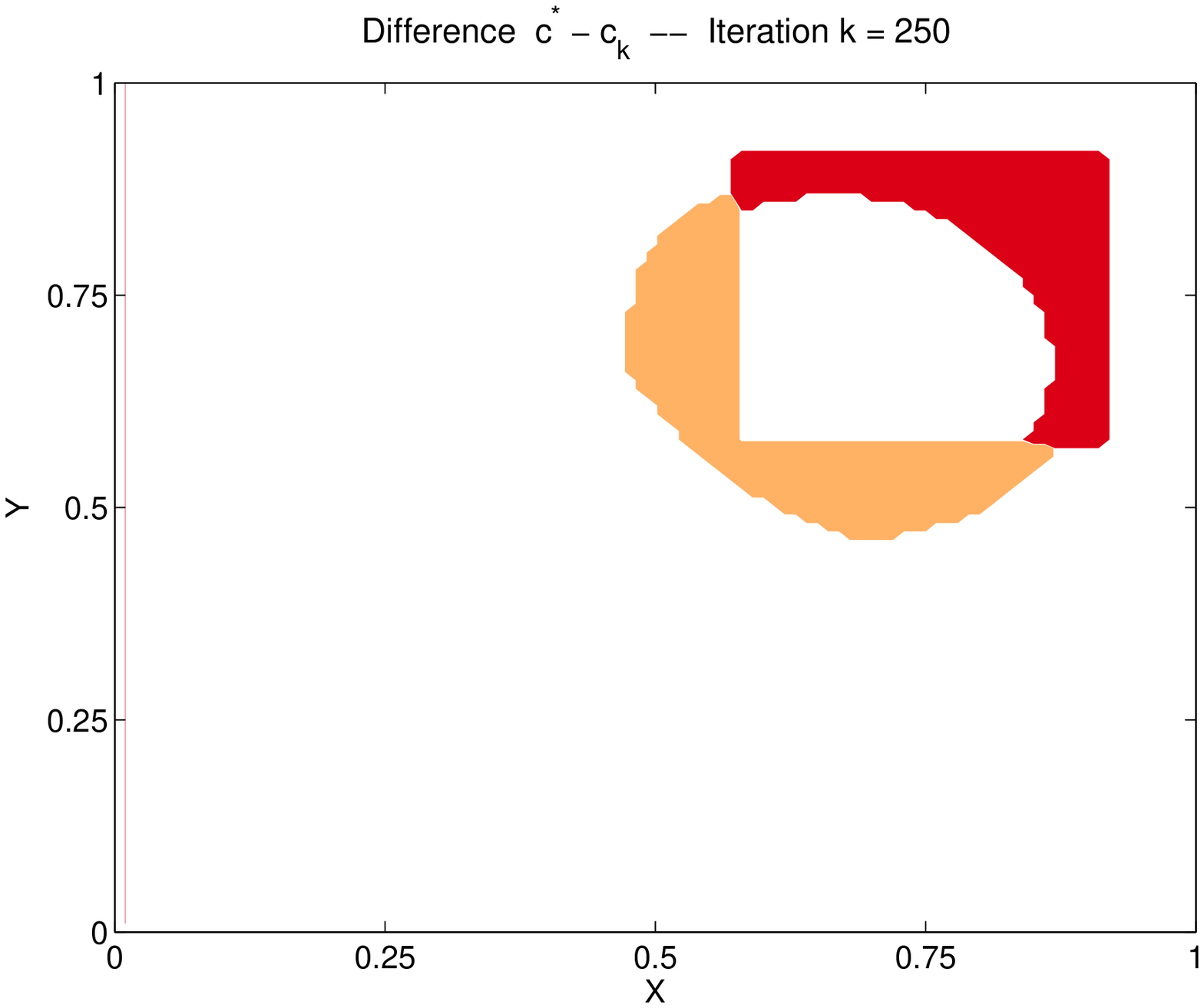}  
\includegraphics[width=4.2cm] {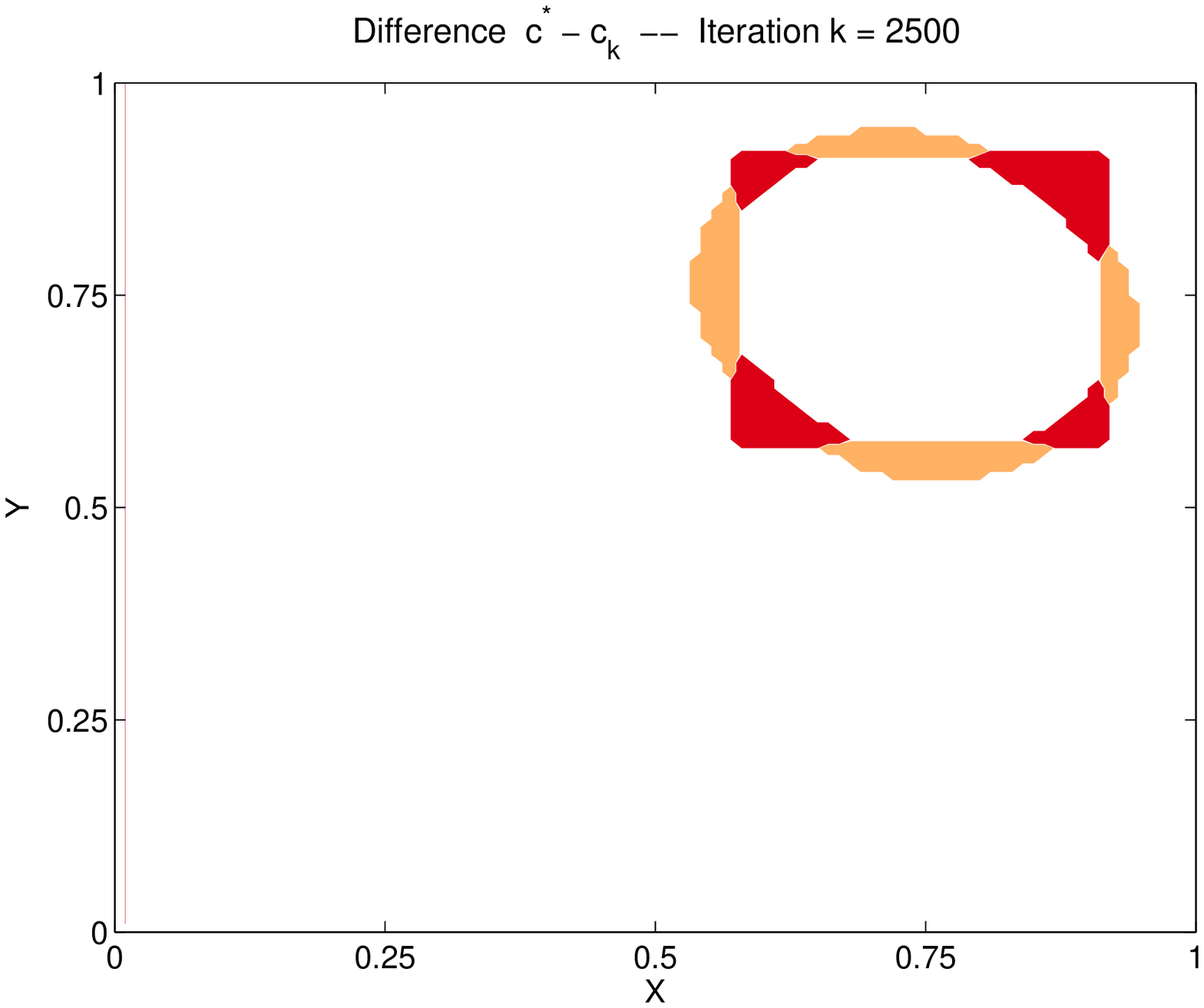} } 
\centerline{\hfil (e) \hskip4cm (f) \hskip4cm (g) \hskip4cm (h) \hfil}
\caption{\small Section~\ref{ssec:numer-ac}, 1st example.
{\bf (a)}--{\bf (d)} Iterative reconstruction of $a^*$.
{\bf (a)} Evolution of the $L^2$ error.
{\bf (b)} Difference $a_0 - a^*$.
{\bf (c)} Difference $a_{750} - a^*$.
{\bf (d)} Difference $a_{2500} - a^*$.
{\bf (e)}--{\bf (h)} Iterative reconstruction of $c^*$.
{\bf (e)} Evolution of the $L^2$ error.
{\bf (f)} Difference $c_0 - c^*$.
{\bf (g)} Difference $c_{250} - c^*$.
{\bf (h)} Difference $c_{2500} - c^*$. }
\label{fig:example-ac1}
\end{figure}

Thus, in order to save computational effort, we adopted the strategy to ``freeze'' the
coefficient $a_k(x) = a_0(x) \equiv 1$ during the first iterations, and to iterate
only with respect to $c_k$. We follow this strategy until the sequence $c_k$ stagnates
(this is an indication that the iteration error $\|c_k - c^*\|$ is small).
In Figure~\ref{fig:example-ac1}~(a) and~(e) this stage corresponds to the first
$k_1=250$ iterative steps (notice that $\|a_k - a^*\|$ remains constant for
$k = 0, \dots, k_1$, while the difference $c_{k_1} - c^*$ is plotted in~(g)).

After this first iteration stage, we freeze $c_k = c_{k_1}$ and iterate only with respect to
$a_k$. This characterizes the second stage of the method. A natural question at this
point would be: Why not to iterate with respect to both $(a_k,c_k)$ for $k \geq k_1$? \
We tried to proceed in this way, but what we observed is that: as long as $\|a_k - a^*\|$
does not significantly improve, the iterates $c_k$ stagnate with $\|c_k - c_{k_1}\|$
almost constant. \\
This second stage of the iteration can be observed in Figure~\ref{fig:example-ac1}~(a)
and~(e). Notice that $\|a_k - a^*\|$ decreases significantly, while $\|c_k - c^*\|$
remains constant for $k = k_1, \dots, k_2=750$ (the difference $a_{k_2} - a^*$ is
plotted in Figure~\ref{fig:example-ac1}~(c)).

After the conclusion of the second iteration stage, the pair
$(a_k,c_k)$ is already a good approximation for $(a^*,c^*)$ (see
Figure~\ref{fig:example-ac1}~(c) and~(g)). As a matter of fact, this
approximation is so good that, proceeding with the iteration
simultaneously with respect to both $(a_k,c_k)$, the iteration errors $\|a_k -
a^*\|$ and $\|c_k - c^*\|$ are monotone decreasing. However, having
in mind the convergence of the diffusion coefficient $a_k$ to the
correct solution $a^*$ takes more iterations than the absorption
coefficient $c_k$, in this third stage we decided that each iteration step consists in one iteration with respect to the absorption coefficient
$c_k$ and two iterations with respect to the diffusion coefficient $a_k$ (see
Figure~\ref{fig:example-ac1}~(a) and~(e) for $k \geq k_2$).

The introduction of this 3-stage iteration is motivated by the above mentioned facts
(F1) -- (F3). The calculation of optimal transition indexes $k_1$, $k_2$ between the
three stages is a difficult task.
However, since the degree of ill-posedness of the separate inverse problems for $a$
and $c$ is very distinct from each other %
it's not hard to get approximate values for $k_1$ and $k_2$ that will lead to a
large gain in computational effort by using this 3-stage strategy.
\medskip

The following second and third examples in this section do belong together. The
corresponding exact solutions are shown in Figure~\ref{fig:example-ac}~(b) and~(c)
respectively. Our goal is to investigate how the distance between the supports of
the exact solution pair $(a^*, c^*)$ may interfere with the
quality of the reconstruction of each single coefficient. In the second example
there is a positive distance between the supports, while in the
third example both supports overlap.

\begin{figure}[b!]
\centerline{
\includegraphics[width=4.3cm]{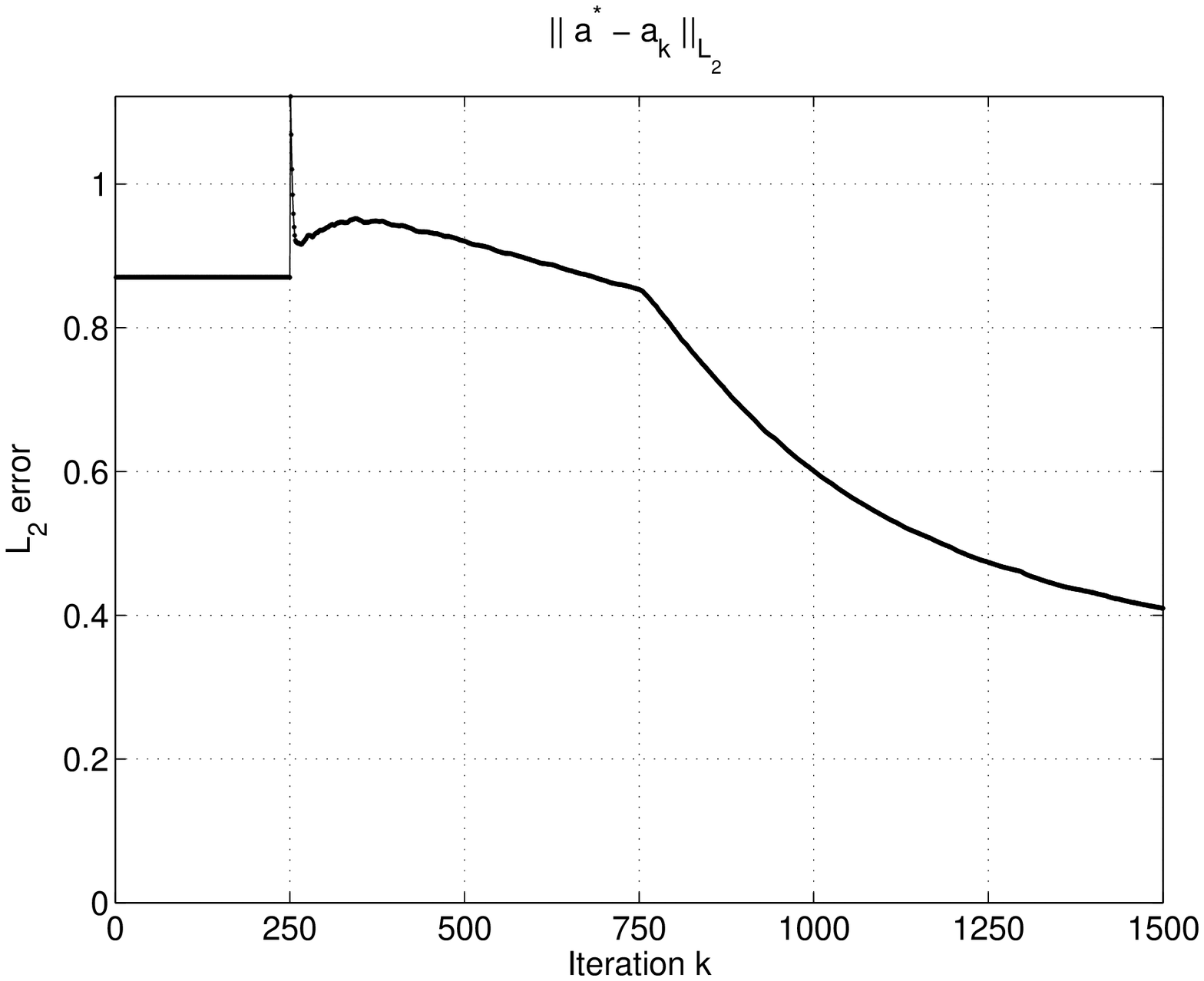} 
\includegraphics[width=4.2cm]{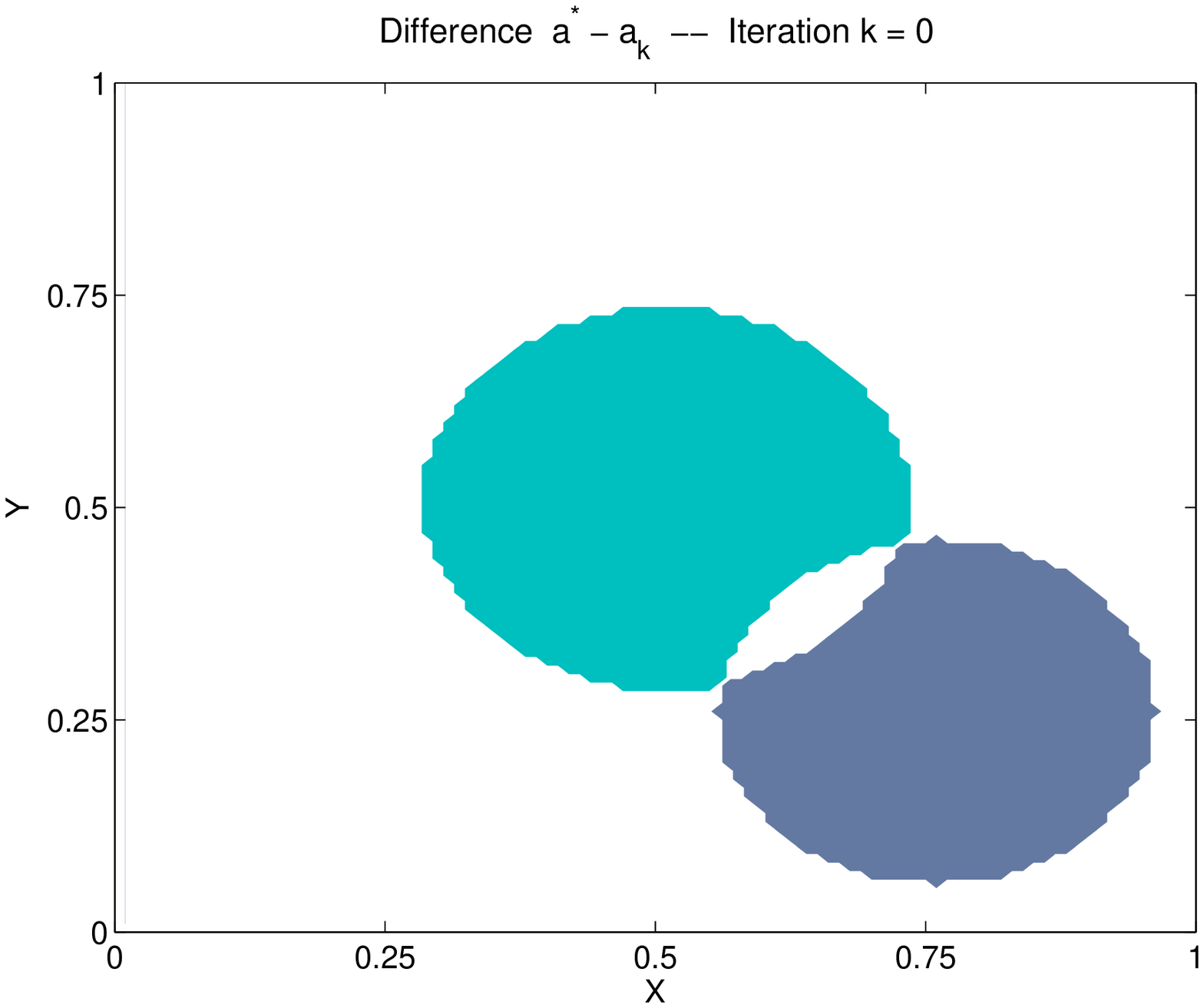} 
\includegraphics[width=4.2cm]{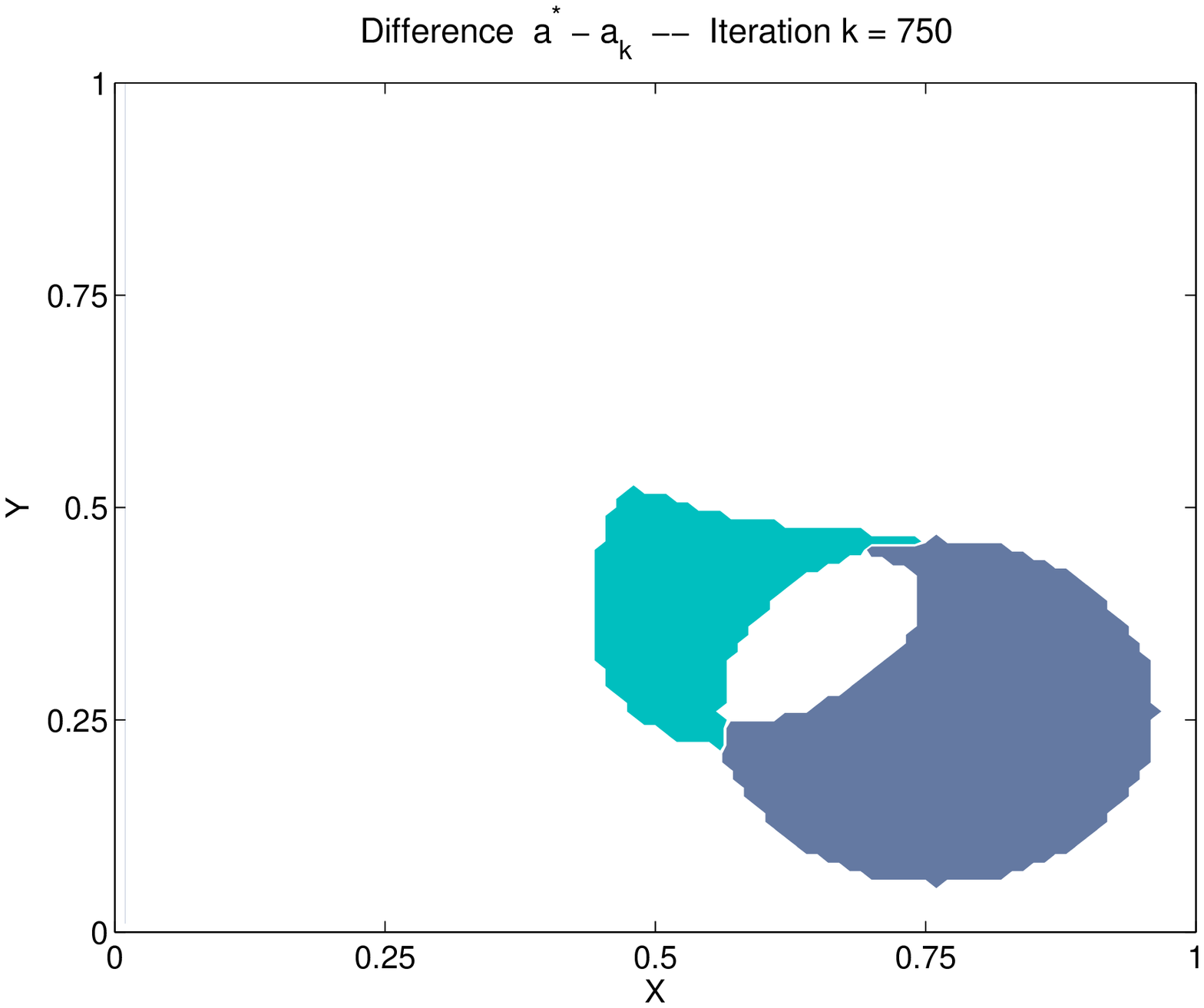} 
\includegraphics[width=4.2cm]{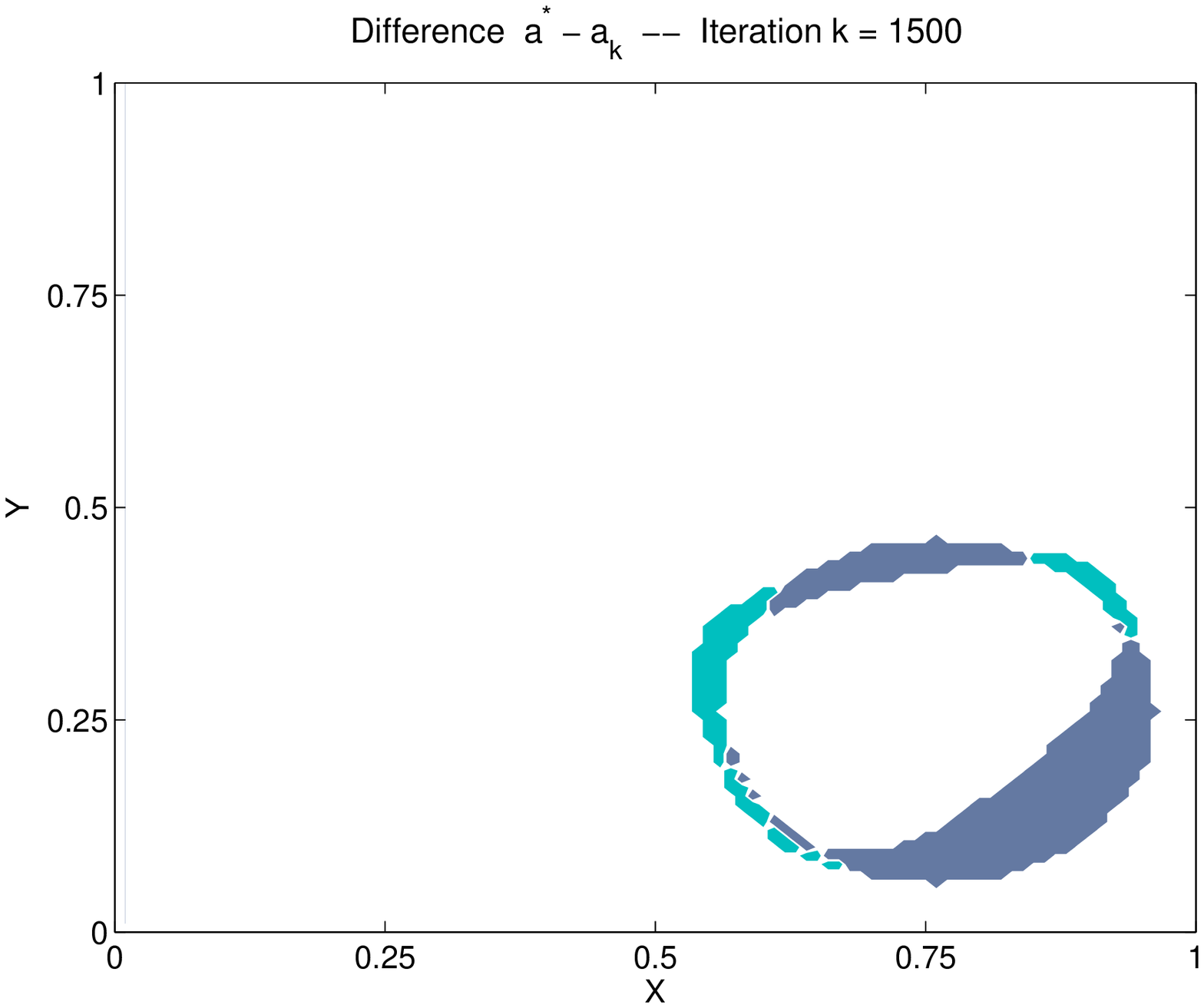}} 
\centerline{\hfil (a) \hskip4cm (b) \hskip4cm (c) \hskip4cm (d) \hfil}
\centerline{
\includegraphics[width=4.3cm]{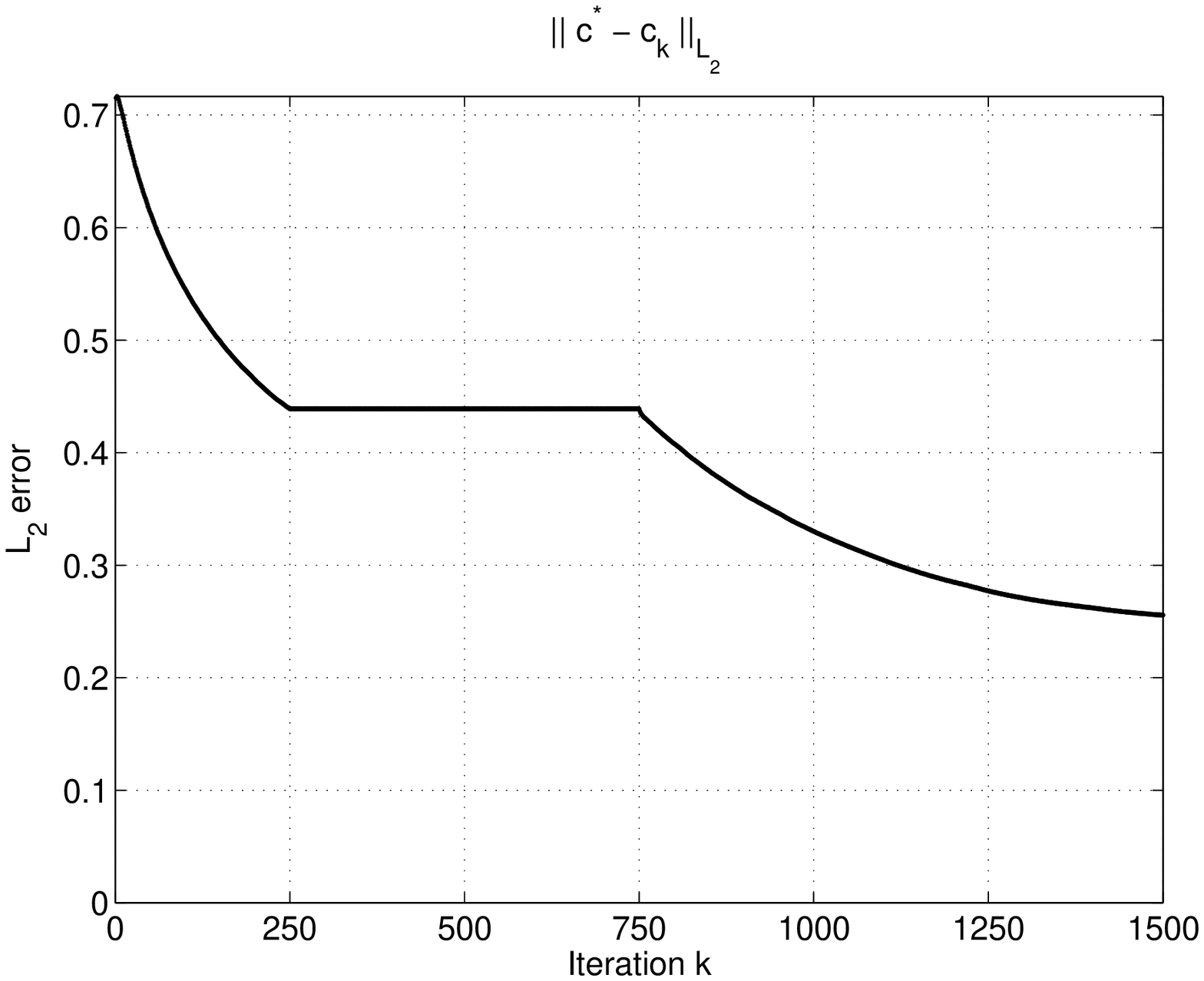}  
\includegraphics[width=4.2cm]{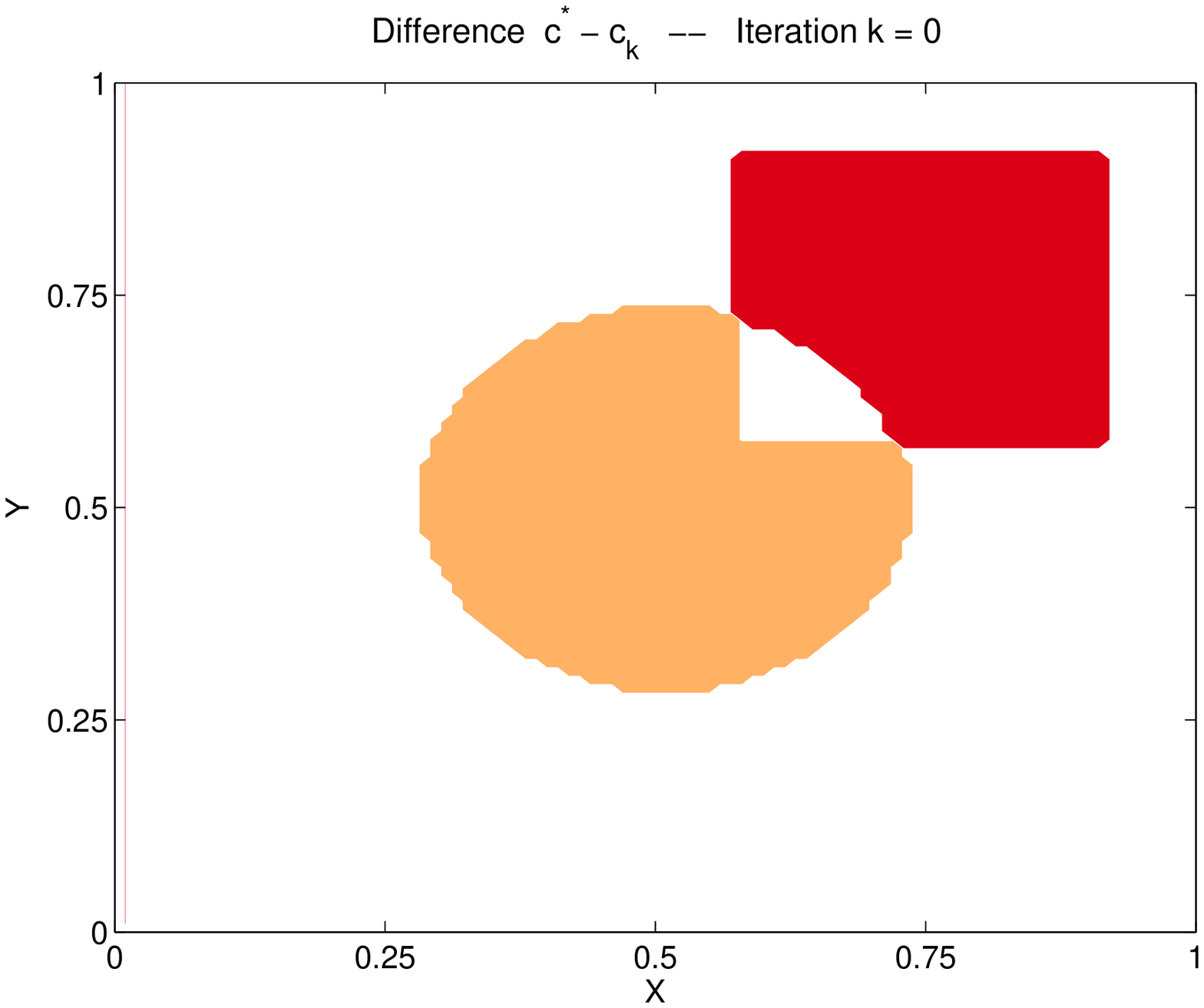}  
\includegraphics[width=4.2cm]{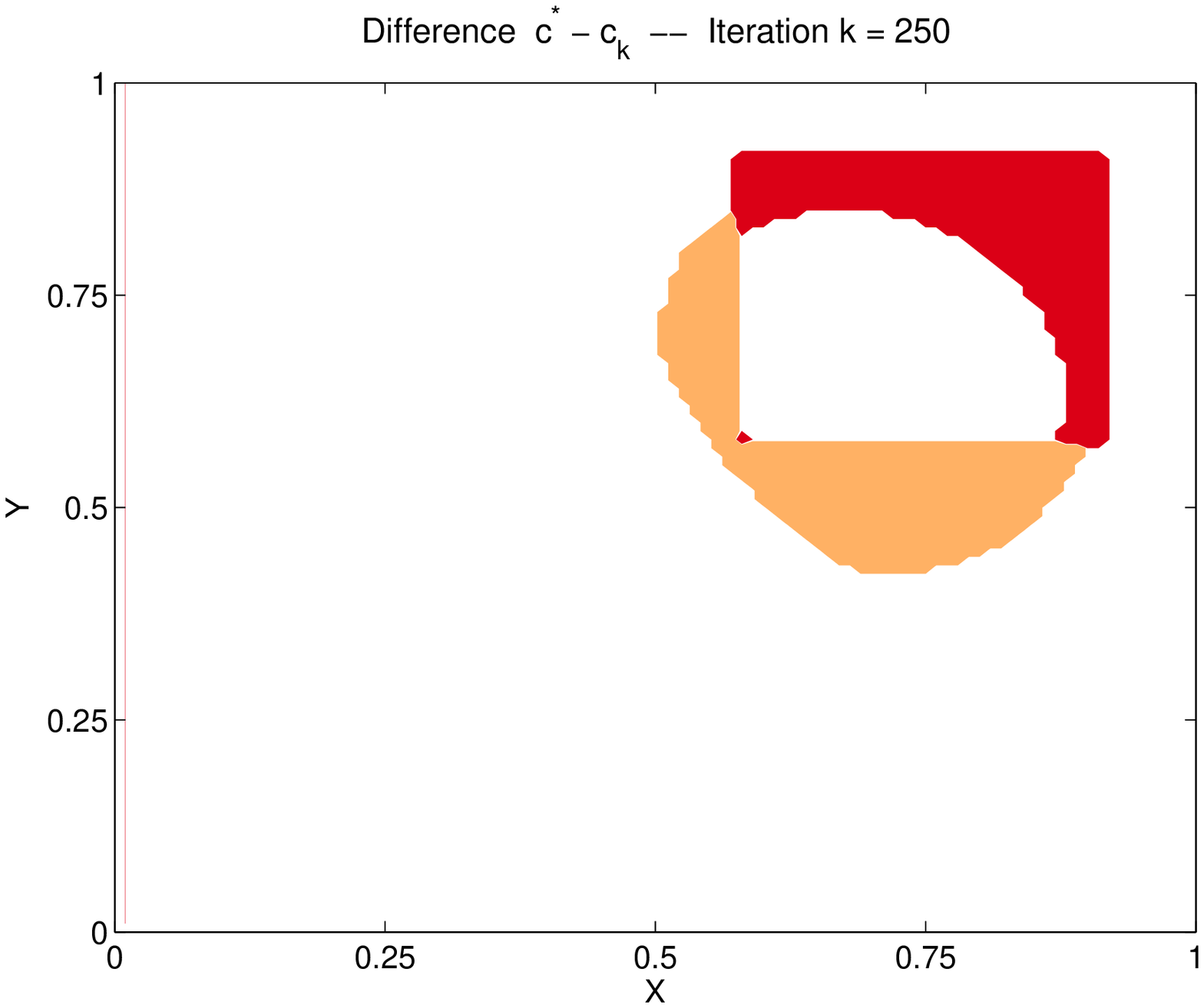}  
\includegraphics[width=4.2cm]{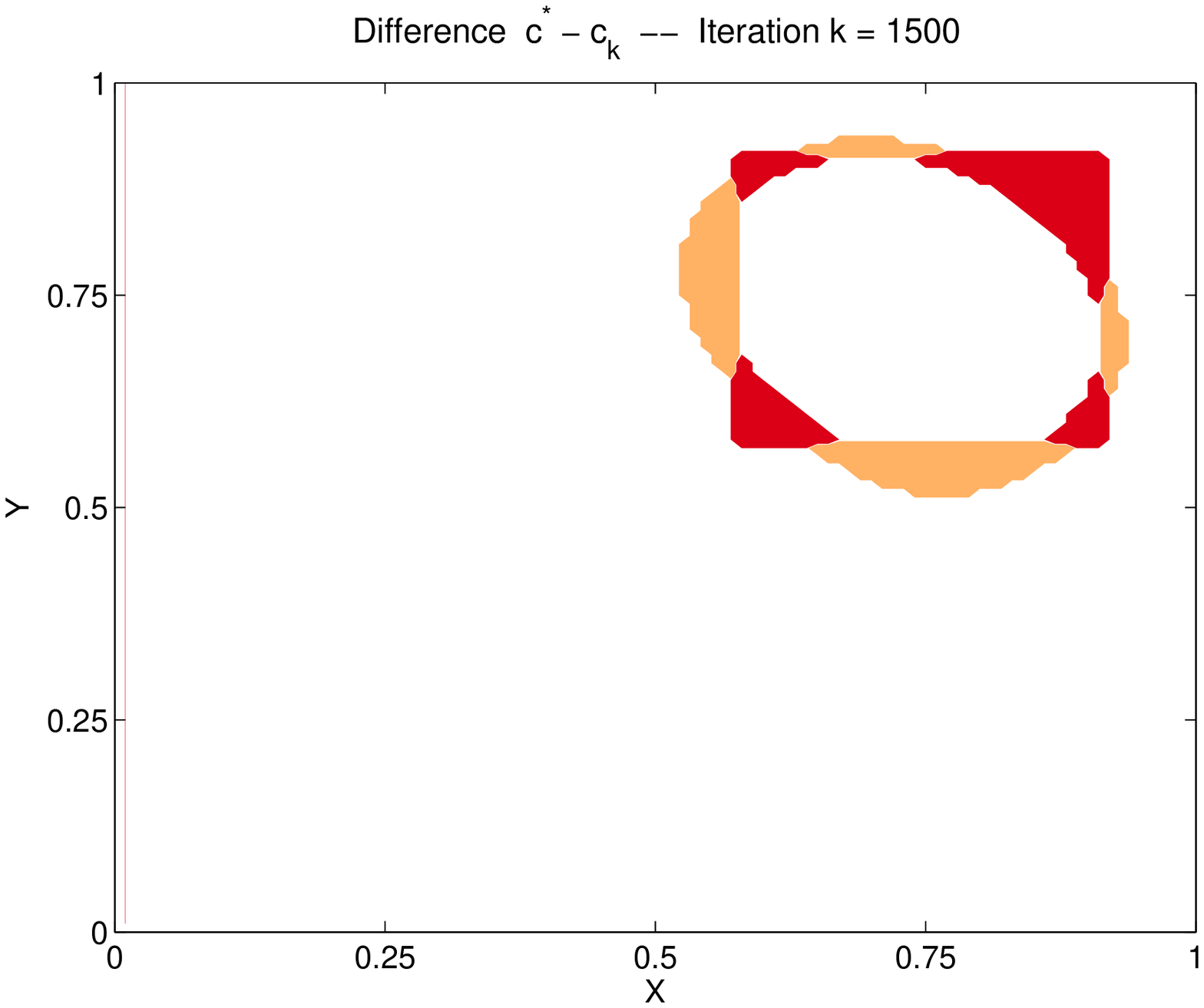}}  
\centerline{\hfil (e) \hskip4cm (f) \hskip4cm (g) \hskip4cm (h) \hfil}
\caption{\small Section~\ref{ssec:numer-ac}, 2nd example.
{\bf (a)}--{\bf (d)} Iterative reconstruction of $a^*$.
{\bf (a)} Evolution of the $L^2$ error.
{\bf (b)} Difference $a_0 - a^*$.
{\bf (c)} Difference $a_{750} - a^*$.
{\bf (d)} Difference $a_{1500} - a^*$.
{\bf (e)}--{\bf (h)} Iterative reconstruction of $c^*$.
{\bf (e)} Evolution of the $L^2$ error.
{\bf (f)} Difference $c_0 - c^*$.
{\bf (g)} Difference $c_{250} - c^*$.
{\bf (h)} Difference $c_{1500} - c^*$. }
\label{fig:example-ac2}
\end{figure}

Although the distance between supp$(a^*)$ and supp$(c^*)$ in example~2 is smaller
than in example~1 above, the 3-stage iteration behaves similarly in both examples.
The 1st-stage is ended after $k_1 = 250$ iterations, when the error $\|c_k - c^*\|$ has decreased considerably (see Figure~\ref{fig:example-ac2}~(e)).
The 2nd-stage corresponds to $k_1 \leq k \leq k_2 = 750$; at this point the difference between the exact solution $a^*$ and the iteration $a_{750}$ has
visibly decreased (see Figure~\ref{fig:example-ac2}~(c)). The 3rd-stage of the iteration corresponds to $k \geq k_2$. 
In this final stage, each iteration step consists in two iterations with respect to the diffusion coefficient $a_k$ and
one iteration with respect to the absorption coefficient $c_k$. The final results can be observed in Figure~\ref{fig:example-ac2}~(d) and Figure~\ref{fig:example-ac2}~(h) respectively.

The third and last example reveled itself as the most difficult identification
problem among all three considered in this section. The solution pair $(a^*,c^*)$
is chosen such that the supports of $a^*$ and $c^*$ intersect (see Figure~%
\ref{fig:example-ac}~(c)). We start the iteration once again keeping $a_k$
constant during the first stage. This part of the method is successful, 
since after $k_1=500$ iterations $c_{k_1}$ delivers a good approximation
for the exact solution $c^*$ (Figure~\ref{fig:example-ac3}~(g)). After that, we
start iterating with respect to $a_k$. After $k_2 = 750$ iterations we observe that
the error$\|a_k - a^*\|$ has decreased considerably (Figure~\ref{fig:example-ac3}~(a)). Finally, we start with 3rd-stage of the algorithm
and this is the point where the difficulties arise. No matter how many iterations we compute with respect to $c_k$, the approximation does
not get better than the one plotted in Figure~\ref{fig:example-ac3}~(g), which is
computed after $k_1 = 250$ steps.
After $1500$ steps, no significant improvement can be observed in the reconstruction of the absorption coefficient
(compare Figure~\ref{fig:example-ac3}~(g) and (h)). In this last example, the reconstruction of the diffusion
coefficient is very precise, but the approximation obtained for
the absorption coefficient is not so good.

It is worth noticing that the poor reconstruction of $c^*$ is not due to non-stable
behavior of our 3-stage algorithm. The particular exact solution $(a^*,c^*)$ in
this example (with intersecting supports) leads to a very hard identification problem 
already reported in \cite{Xu02, KALVK99, ArridgeSch99}.

It is worth mentioning that all problems presented in this Section were solved using
the standard level set method described in Section~\ref{sec:regularization}, i.e.,
updating both $(a_k,c_k)$ in every iterative step (and neglecting the 3-stage strategy).
The final results of these iterations were basically the same as the ones presented here. However,
the computational effort involved in the computation was by far much larger.

\begin{figure}[hb!]
\centerline{
\includegraphics[width=4.3cm]{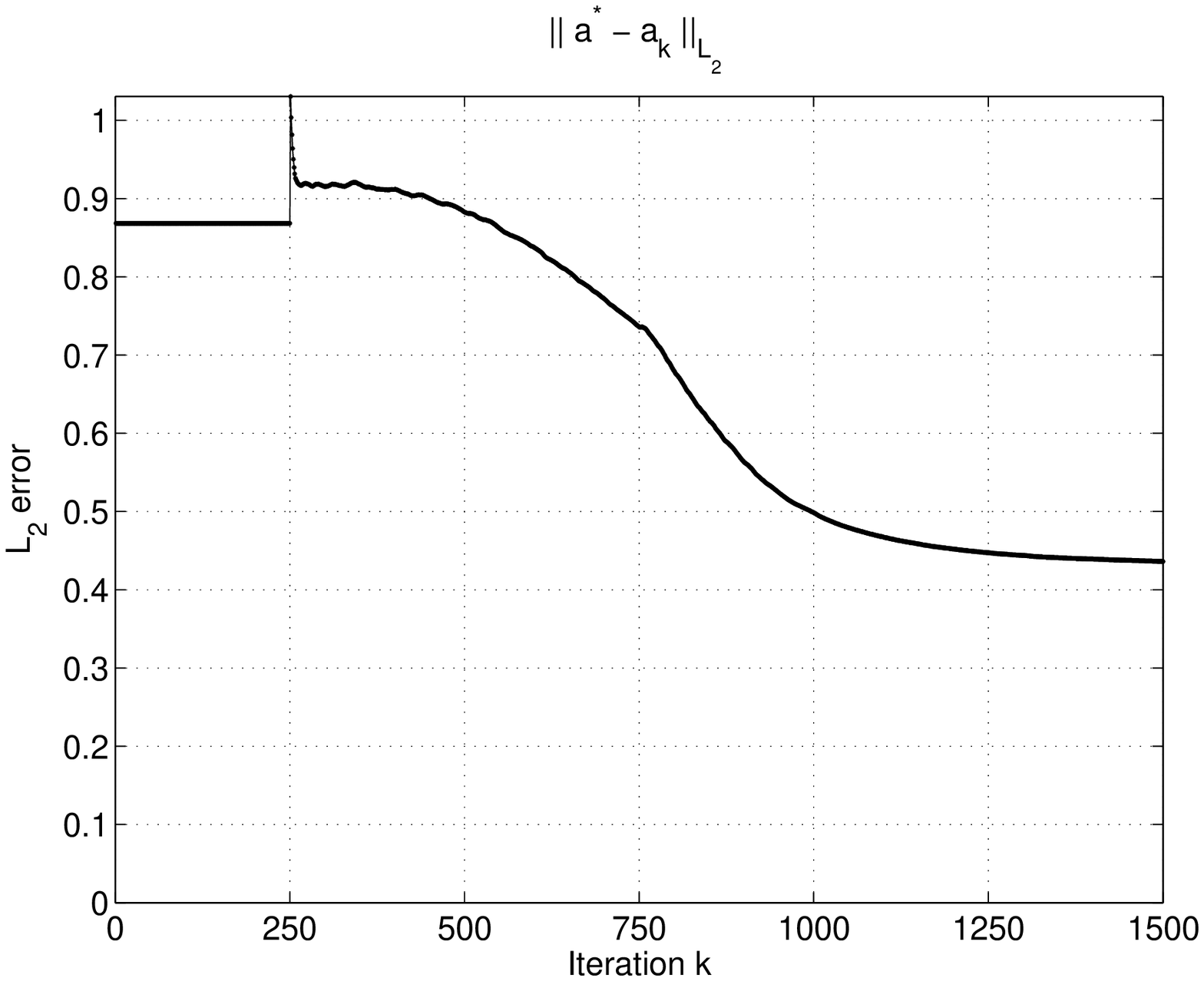} 
\includegraphics[width=4.2cm]{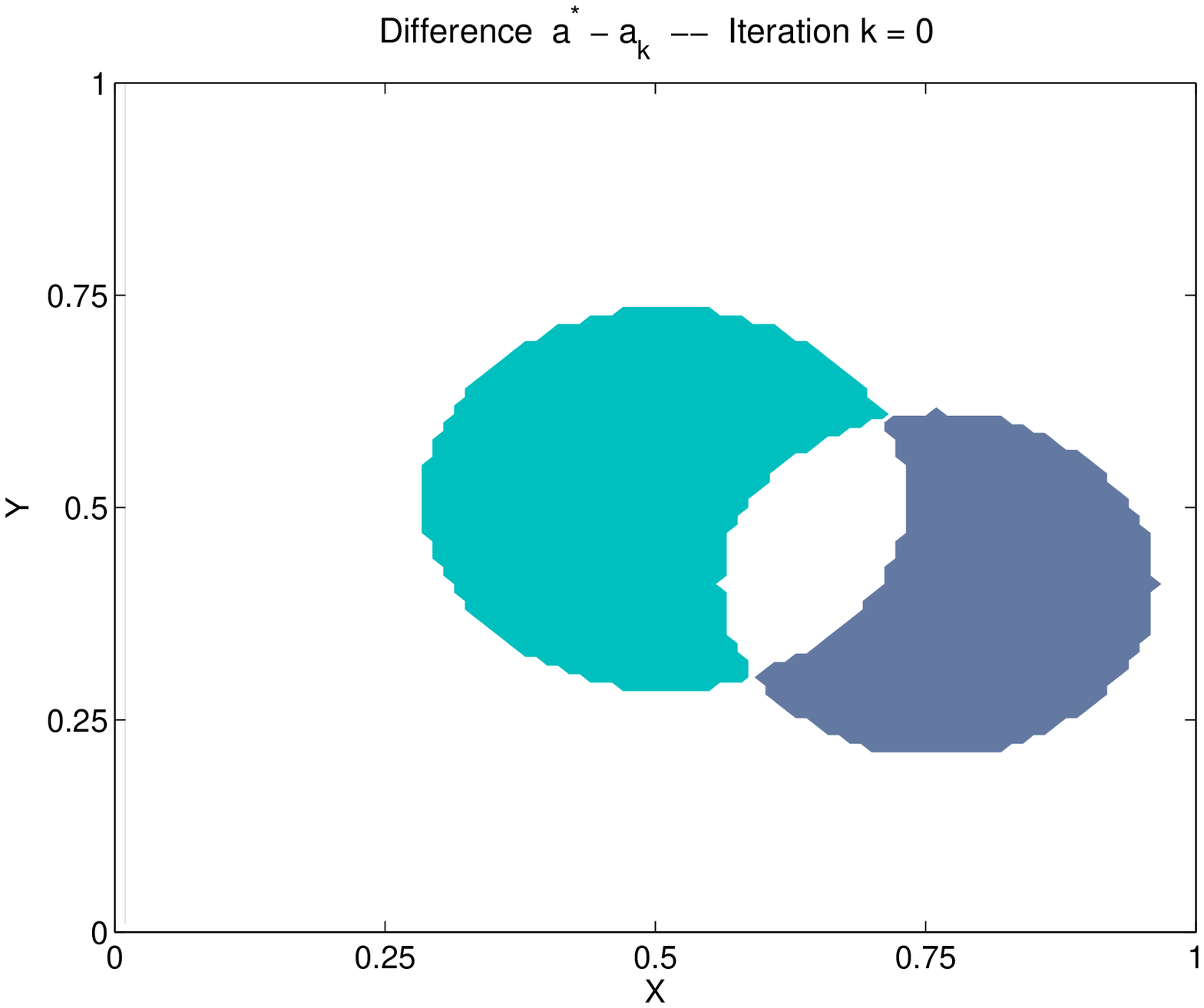} 
\includegraphics[width=4.2cm]{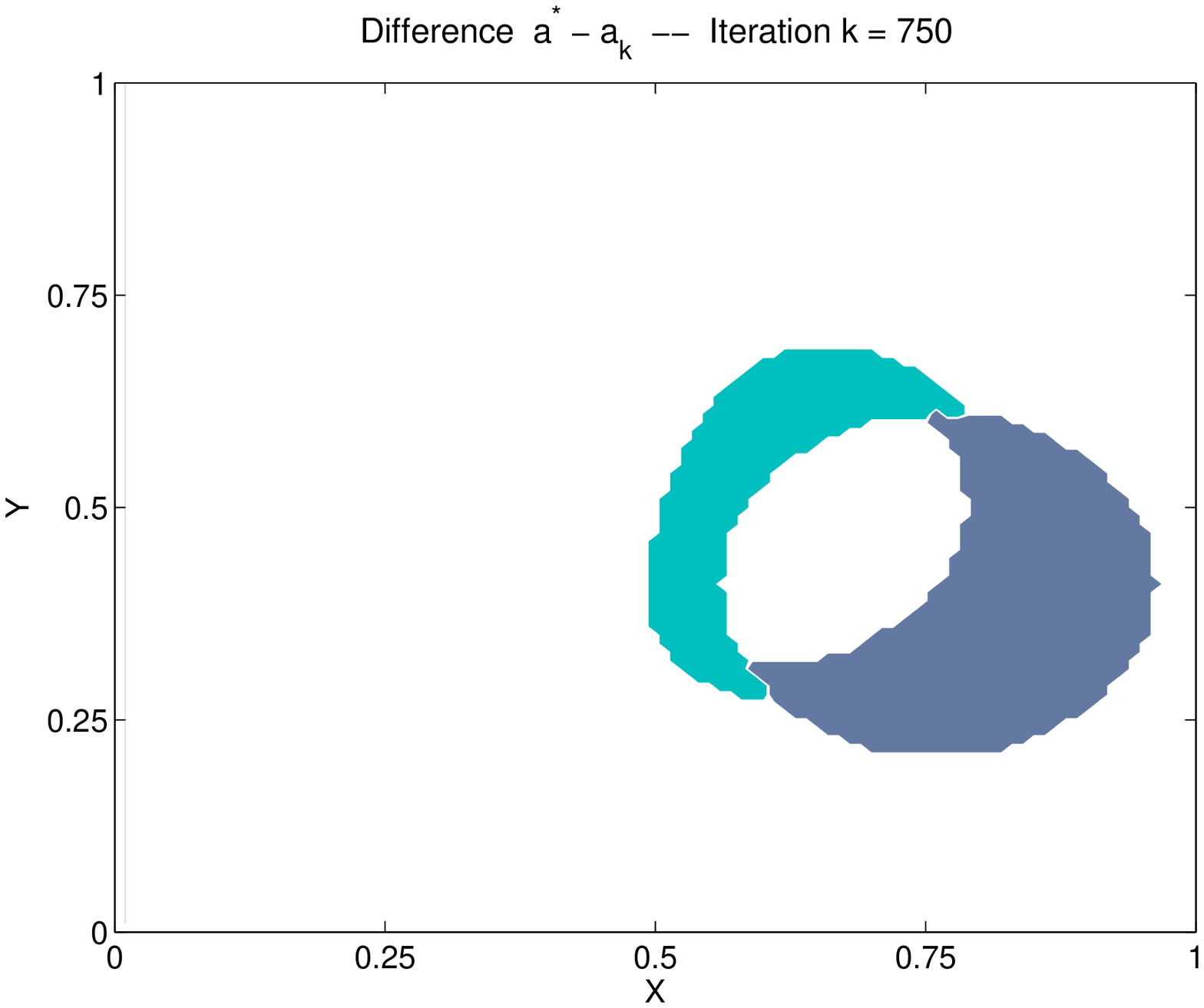} 
\includegraphics[width=4.2cm]{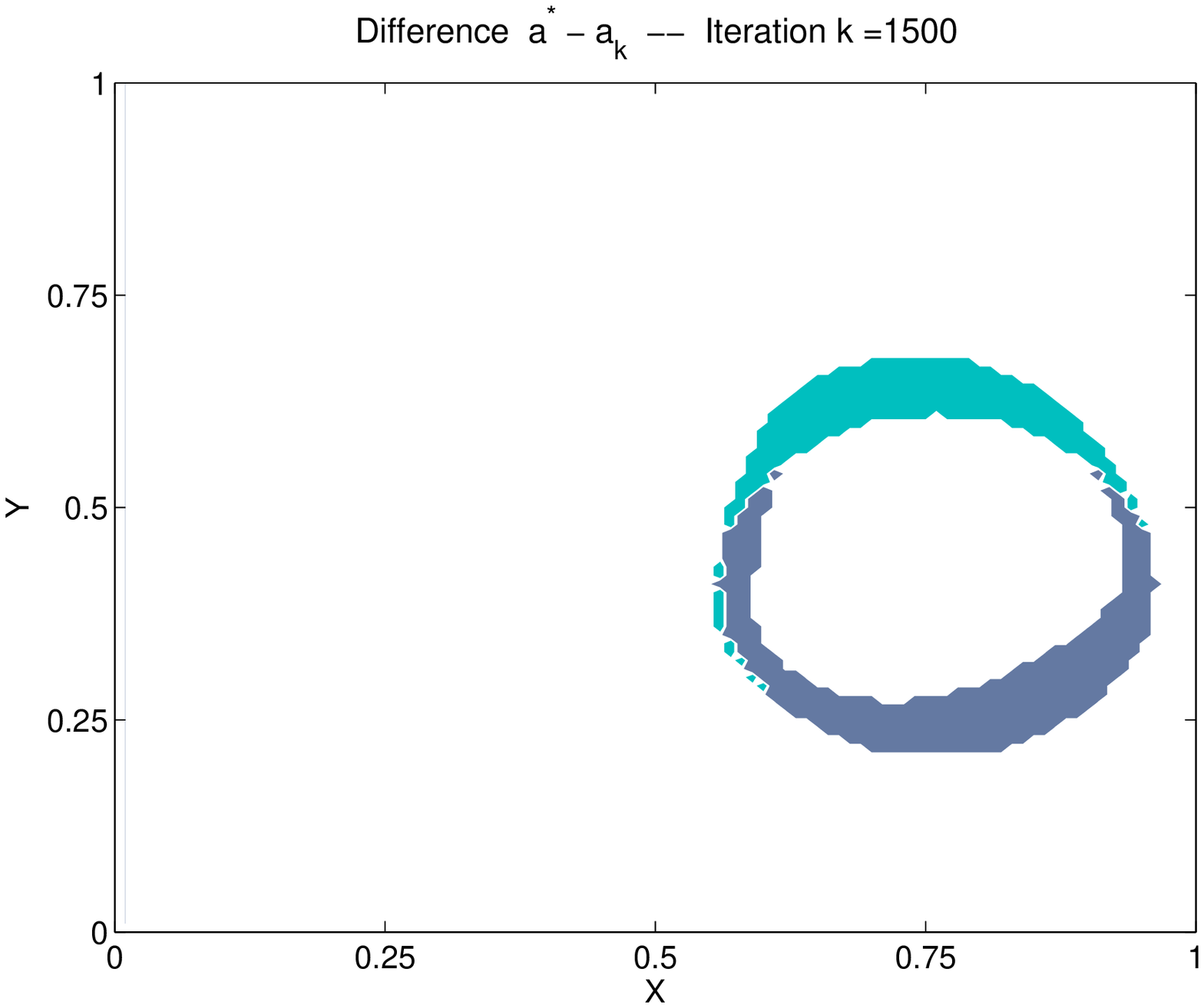}} 
\centerline{\hfil (a) \hskip4cm (b) \hskip4cm (c) \hskip4cm (d) \hfil}
\centerline{
\includegraphics[width=4.3cm]{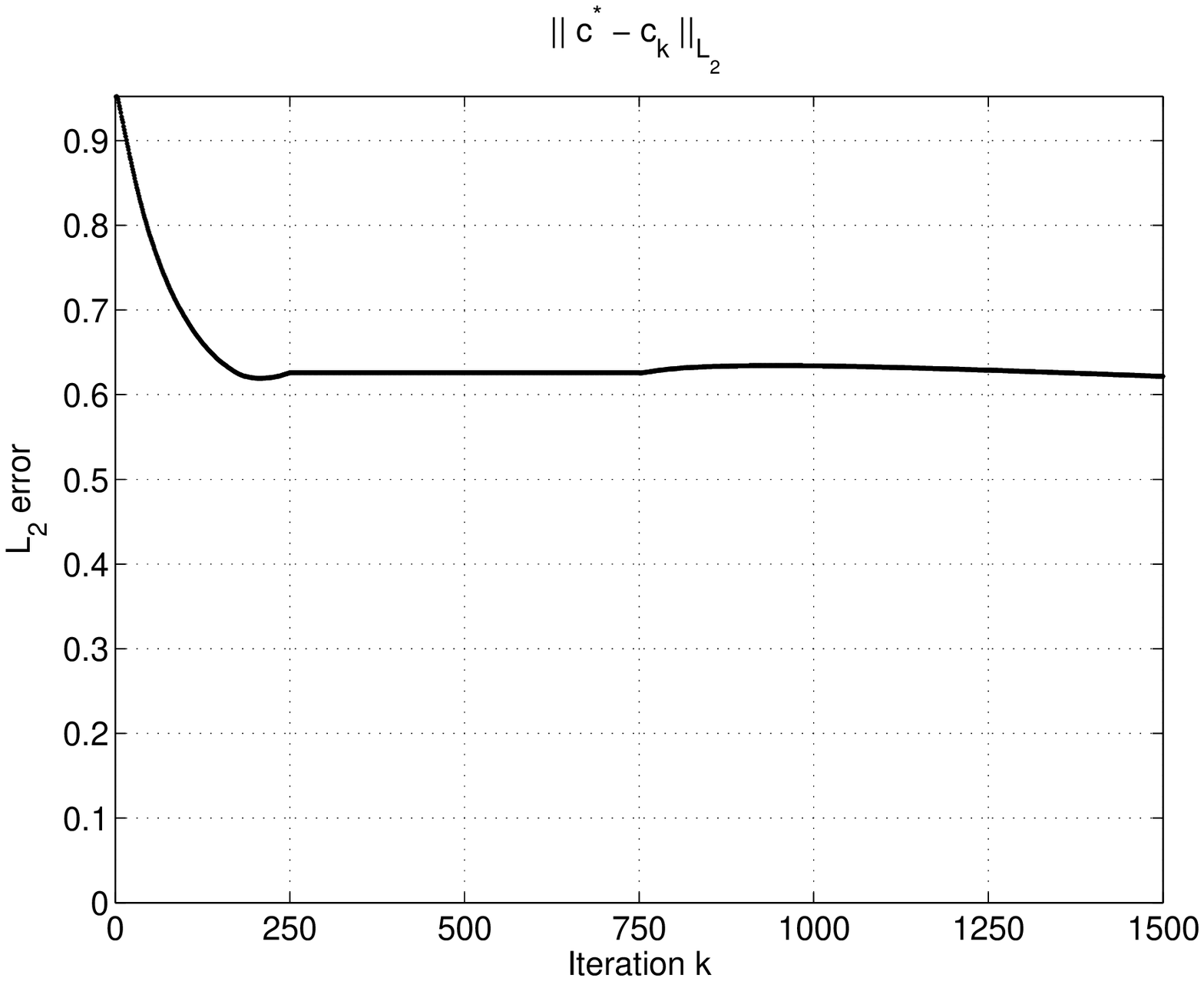}  
\includegraphics[width=4.2cm]{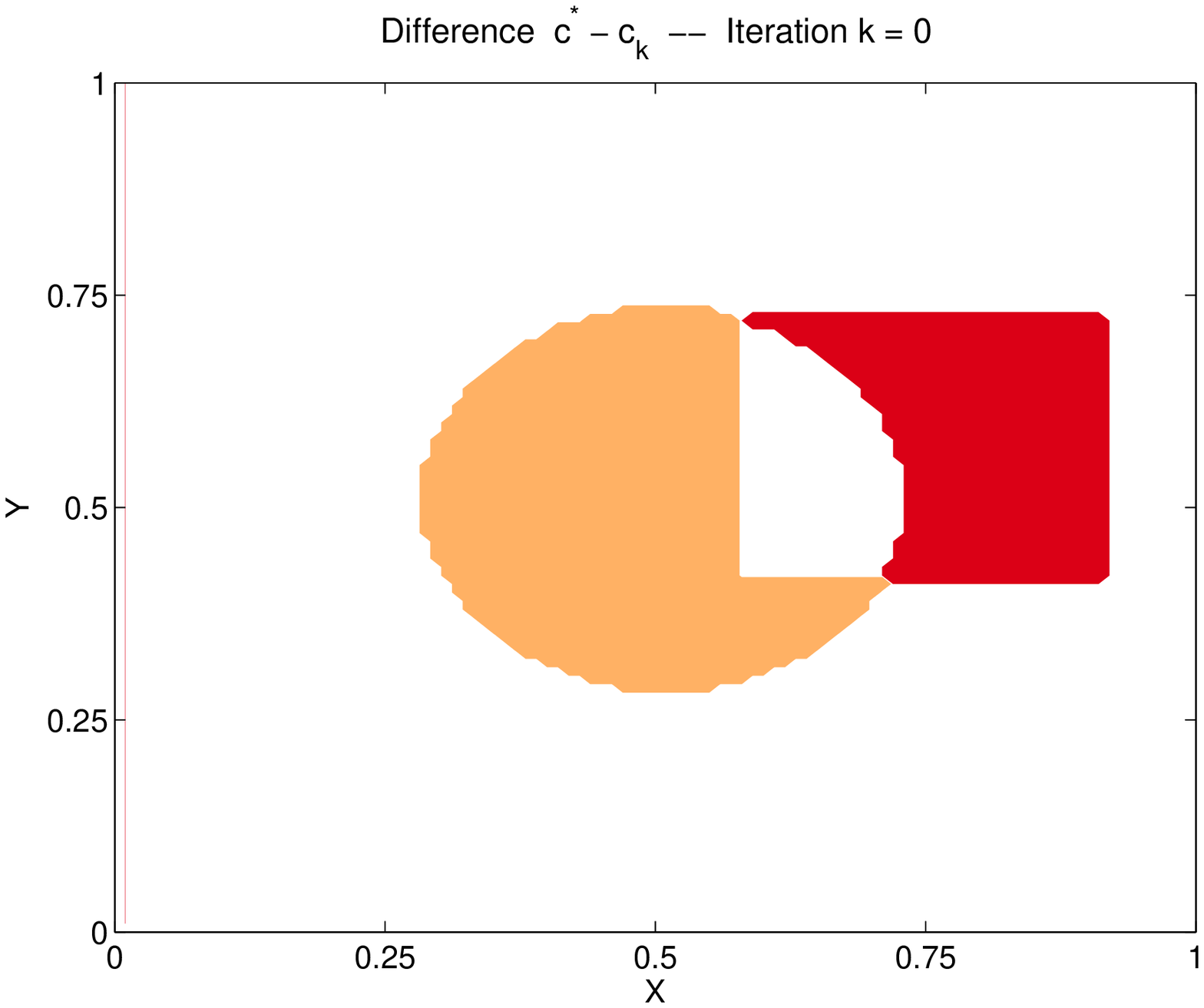} 
\includegraphics[width=4.2cm]{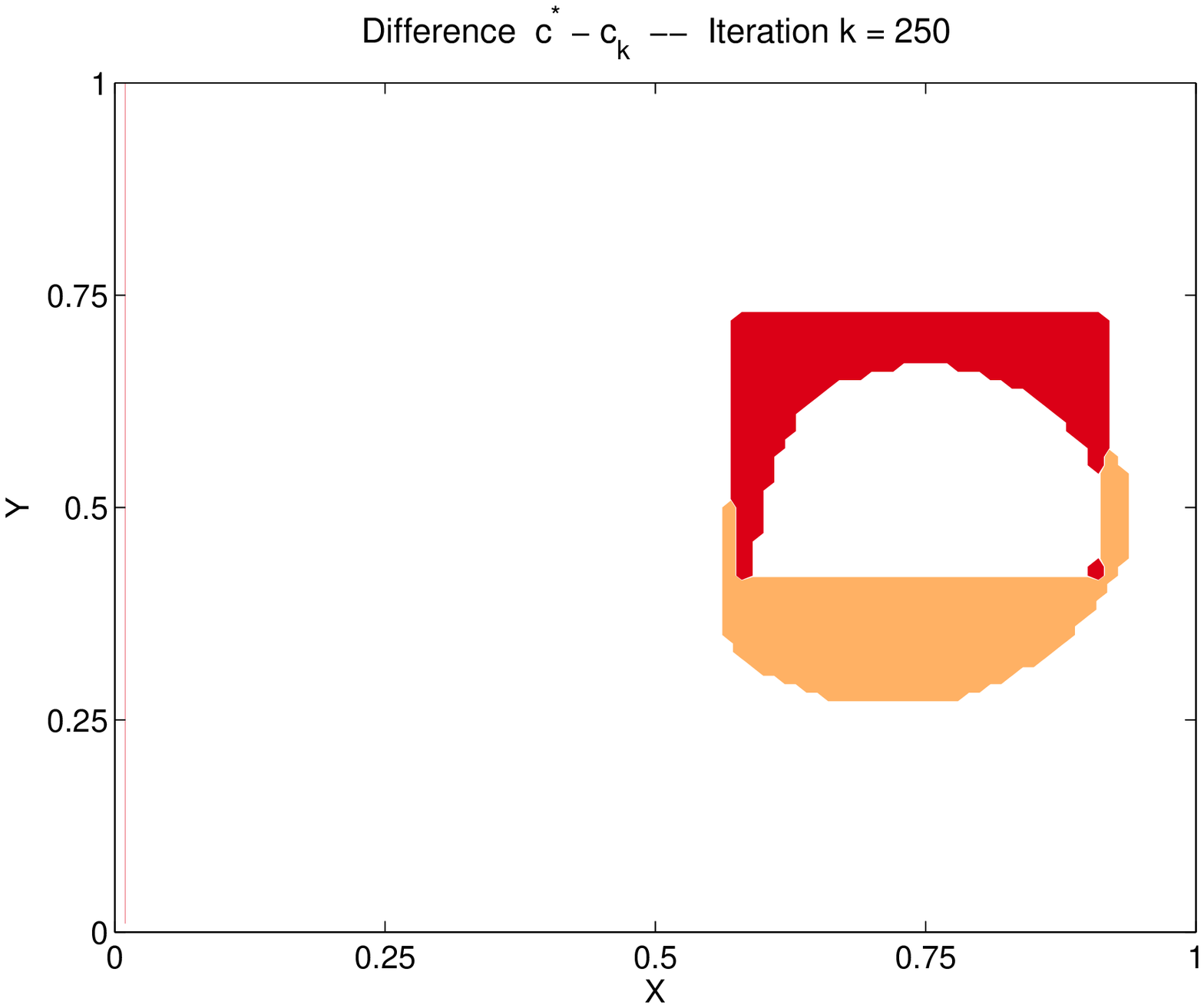} 
\includegraphics[width=4.2cm]{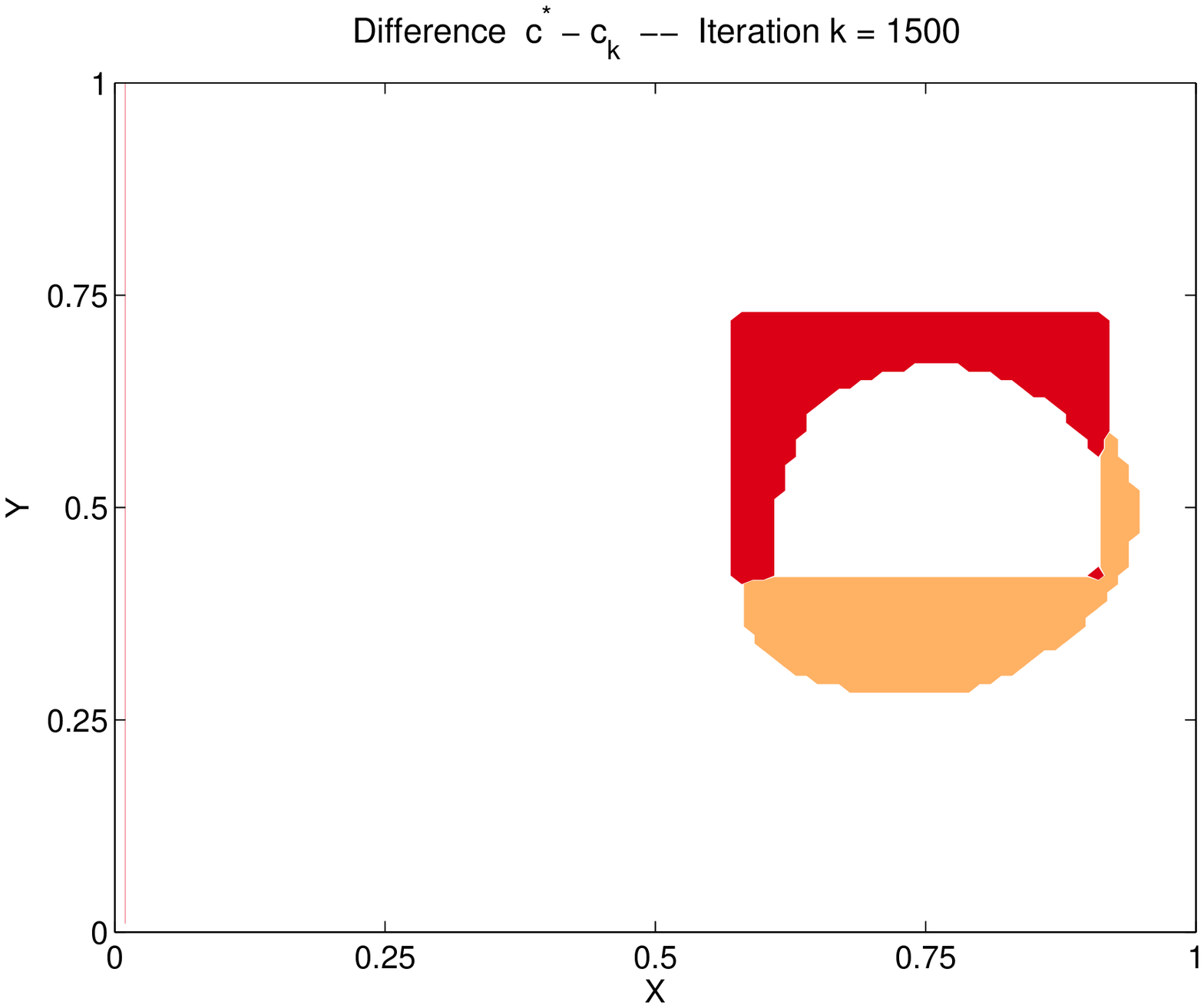}} 
\centerline{\hfil (e) \hskip4cm (f) \hskip4cm (g) \hskip4cm (h) \hfil}
\caption{\small Section~\ref{ssec:numer-ac}, 3rd example.
{\bf (a)}--{\bf (d)} Iterative reconstruction of $a^*$.
{\bf (a)} Evolution of the $L^2$ error.
{\bf (b)} Difference $a_0 - a^*$.
{\bf (c)} Difference $a_{750} - a^*$.
{\bf (d)} Difference $a_{1500} - a^*$.
{\bf (e)}--{\bf (h)} Iterative reconstruction of $c^*$.
{\bf (a)} Evolution of the $L^2$ error.
{\bf (f)} Difference $c_0 - c^*$.
{\bf (g)} Difference $c_{250} - c^*$.
{\bf (h)} Difference $c_{1500} - c^*$. }
\label{fig:example-ac3}
\end{figure}

\section{Conclusions}\label{sec:Conclusions}

In this paper, we develop a level set regularization approach for
simultaneous reconstruction of the piecewise constant coefficients
$(a,c)$ from a finite set of boundary measurements of optical tomography
in the diffusive regime. From the theoretical point of view, we prove that the forward map $F$ is
continuous in the $[L^1(\Omega)]^2$ topology. Hence, following standard arguments presented by the authors in previous papers
(see \cite{CLT09}) we get that the proposed level set strategy is a regularization method.
The main result behind the continuity of $F$ is a generalization of Meyers' Theorem for our particular case.

On the other hand, we propose a numerical algorithm to reconstruct simultaneously the diffusion and absorption coefficients.
Both coefficients are computed by minimizing a regularized energy functional.
Motivated by the fact that the reconstruction of the absorption coefficient $c$ is a mildly ill-posed inverse problem whereas
the reconstruction of the diffusion coefficient $a$ is exponentially ill-posed, we present a split strategy that consists in freezing $a=a_0$
and first iterate with respect to $c$ until the iteration stagnate. Then, keep $c=c_k$ and start to
iterate with respect to $a$ until stagnation of the iteration. Finally, iterate both coefficient. 
This numerical strategy has not only demonstrated that gives very good results but also reduces significantly the computational effort.

The situation of non-convergence of the level set algorithm, that is when coefficients $(a,c)$
have a crossing section (as in Subsection~\ref{ssec:numer-ac}) is not an easy problem and it has
already been reported in \cite{Xu02, KALVK99, ArridgeSch99}. We conjecture that the level set
algorithm will improve its performance if enough pairs of Neumann-to-Dirichlet data are available.
Since the situation with many measurements is numerically demanding, a strategy like the one proposed in  \cite{RKDA14}
could be more appropriated. We let this problem for future and careful investigation.


\section*{Acknowledgments}

The work of J.P.A. was partially 
supported by grants from CONICET and SECY-UNC.

A.D. acknowledges support from CNPq - 
Science Without Border grant
200815/2012-1, ARD-FAPERGS 
grant 0839 12-3 and CNPq grant 472154/2013-3.

The work of M.M.A. was partially 
supported by CNPq grants no.
406250/2013-8, 237068/2013-3 and
306317/2014-1.

The authors would like to thanks 
Prof. Dr. Uri M. Ascher for the all 
discussions and valuable suggestions.

\appendix 
\section{Proof of Theorem~\ref{th:Meyers}}
\label{sec:Appendix}

The main purpose of this 
appendix is to show that under mild assumptions on the
boundary (Neumann) data $g$ 
the solution $u$ of \eqref{eq:1}--\eqref{eq:2}
belongs to $W^{1,p}(\Omega)$ for some $p > 2$ 
(therefore better than the
standard regularity $u\in H^1(\Omega)$).

As far as we know, this type of regularity, 
namely $u\in W^{1,p}(\Omega)$ for $p>2$,
goes back to the pioneering work of Meyers~\cite{M63}, 
for elliptic BVPs
with Dirichlet boundary conditions. 
Later on, Gallouet and Monier~\cite{GM99}
generalized Meyers' result to Neumann BVPs. 
However, for the best
of the authors knowledge there is no proof of 
such a result for the
problem \eqref{eq:1}--\eqref{eq:2}.

The following proof was suggested by one of the anonymous referees. The authors are grateful to him
for this suggestion.

\paragraph{Proof of Theorem~\ref{th:Meyers}}
Let $u\in H^1(\Omega)$ be the unique solution
of \eqref{eq:1}--\eqref{eq:2}. It clearly satisfies the
weak formulation
\begin{align}
\label{eq:su70}
 \int_{\Omega}a\nabla u\cdot\nabla \varphi dx
+\int_{\Omega} cu\varphi dx=
\int_{\Gamma}g\varphi d\sigma\qquad \forall \varphi\in H^1(\Omega).
\end{align} 
Define now 
$\tilde u:=u-\dfrac{1}{|\Omega|}\int_{\Omega}u dx$,
which in particular satisfies the weak formulation
(5) in \cite{GM99}, i.e.,
%
\begin{displaymath}
\left\{
\begin{aligned}
& \tilde u\in H^1_*(\Omega),\\
& \int_{\Omega} a\nabla \tilde u\cdot\nabla \varphi dx=
\inner{f}{\varphi}_{(H^1)', H^1}\qquad \forall 
\varphi\in H^1(\Omega), 
\end{aligned}
\right.
\end{displaymath}
where $f$ is defined as
\[
 \inner{f}{\varphi}_{(H^1)',H^1}:=
\int_{\Gamma}g\varphi d\sigma-
\int_{\Omega}cu\varphi dx\qquad \forall \varphi\in H^1(\Omega).
\]
Note that $f$ naturally satisfies
$\inner{f}{1}_{(H^1)',H^1}=0$.
Hence, to finish the proof we only
need to apply the regularity result given
in \cite[Theorem 2]{GM99}.
To this end, it remains to show that the distribution
$f$ is in $W^{1,q}(\Omega)'$. 
As $g$ belongs to $W^{1-1/q,q}(\Gamma)'$
the distribution 
$\varphi\mapsto \int_{\Gamma}g\varphi d\sigma$
is in $W^{1,q}(\Omega)'$ thanks to the trace theorem
in Sobolev spaces. 
Next we shall prove that the
distribution
$h:\varphi\mapsto \int_{\Omega}\,c u\varphi \, dx$ 
also 
belongs to $W^{1,q}(\Omega)'$.
Consider first the case $N=2$. 
In this case, since $q<2$, 
we have the continuous 
embedding~\cite[Corollary 9.14]{bre-fa.book}
$W^{1,q}(\Omega)\hookrightarrow L^{q^*}(\Omega)$, where $q^*=2q/(2-q)>2$.
Letting $s:=q^*/(q^*-1)$ be the conjugate of $q^*$ we have $s<2$ (because $q^*>2$)
and, as a consequence, the continuous 
embedding~\cite{bre-fa.book} $H^1(\Omega)\hookrightarrow L^2(\Omega)\hookrightarrow L^s(\Omega)$.
Using the latter inclusions, 
the fact that $u\in H^1(\Omega)$, 
the second inequality in \eqref{eq:def.df7}
and the H\"{o}lder's inequality, we obtain 
(for all $\varphi \in W^{1,q}(\Omega)$):
\begin{align}
\nonumber
\left|\inner{h}{\varphi}\right|=\left|\int_{\Omega}\,c(x)u\varphi \, dx\right|&\leq
\overline{c}\|u\|_{L^s}\|\varphi\|_{L^{q^*}}\\
\label{eq:201b}
                                           &\leq \overline{c}\|u\|_{L^s}\|\varphi\|_{W^{1,q}},
\end{align}
which proves that $h\in W^{1,q}(\Omega)'$. 
Consider now the case $N\in \{3,4\}$ and let
$q^*:=qN/(N-q)>1$ and (as before) $s:=q^*/(q^*-1)$ 
its conjugate.  In this case, we have also
the continuous 
embeddings~\cite[Corollary 9.14]{bre-fa.book} 
$W^{1,q}(\Omega)\hookrightarrow L^{q^*}(\Omega)$
and, since $1\leq s\leq 2^*:=2N/(N-2)$, 
$H^1(\Omega)\hookrightarrow L^s(\Omega)$.
Using the same reasoning as in the case $N=2$ 
we find that~\eqref{eq:201b} also holds
when $N\in \{3,4\}$ which concludes
the proof of the desired regularity to the distribution $h$.
Altogether, we obtain that 
$f$ is well-defined and belongs to $W^{1,q}(\Omega)'$.


\bibliographystyle{plain}
\bibliography{optical-tomo}

\end{document}